\numberwithin{equation}{section}
\numberwithin{table}{section}
\newlength{\dhatheight}
\newcommand{\doublehat}[1]{%
    \settoheight{\dhatheight}{\ensuremath{\widehat{#1}}}%
    \addtolength{\dhatheight}{-0.35ex}%
    \widehat{\vphantom{\rule{0pt}{\dhatheight}}%
    \smash{\widehat{#1}}}}
\newtheorem{definition}{Definition}[section]
\newtheorem{lemma}[definition]{Lemma}
\newtheorem{theorem}[definition]{Theorem}
\newtheorem{corollary}[definition]{Corollary}
\newtheorem{proposition}[definition]{Proposition}
\newtheorem{question}[definition]{Question}
\theoremstyle{remark}
\newtheorem{example}[definition]{Example}
\newtheorem{remark}[definition]{Remark}
\newcommand{\bigslant}[2]{{\raisebox{.3em}{$#1$}\left/\raisebox{-.3em}{$#2$}\right.}}
\newcommand{\FF}{\mathbb F}
\newcommand{\K}{\mathbb K}
\newcommand{\N}{\mathbb N}
\newcommand{\Z}{\mathbb Z}
\newcommand{\B}{\mathcal{B}}
\newcommand{\BV}{\B(V)}
\newcommand{\LLC}{_{\K}\mathrm{LLC}}
\newcommand{\LC}{_{\K}\mathrm{LC}}
\newcommand{\caL}{\mathcal L}
\def\f{\phi}
\def\ent{\mathrm{ent}}
\def\Vect{\mathrm{Vect}}
\def\End{\mathrm{End}}
\def\Aut{\mathrm{Aut}}
\def\End{\mathrm{End}}
\def\Aut{\mathrm{Aut}}
\def\Im{\mathrm{Im}}
\global\def\card#1{\left|{#1}\right|}
\global\def\ca#1{\mathcal{#1}}
\title{Topological entropy for locally linearly compact vector spaces\footnote{This work was supported by Programma SIR 2014 by MIUR  (Project GADYGR, Number RBSI14V2LI, cup G22I15000160008), 
and partially also by the ``National Group for Algebraic and Geometric Structures, and their Applications'' (GNSAGA - INdAM).}}
\author{Ilaria Castellano\footnote{The first author was partially supported by  EPSRC Grant N007328/1 Soluble Groups and Cohomology.}\\{\footnotesize {\tt ilaria.castellano88@gmail.com}} \\ {\footnotesize University of Southampton}\\ {\footnotesize Building 54, Salisbury Road - 50171BJ Southampton (UK)} 
\and Anna Giordano Bruno\footnote{Corresponding author}\\{\footnotesize {\tt anna.giordanobruno@uniud.it}}\\{\footnotesize Universit\`a degli Studi di Udine}\\{\footnotesize Via delle Scienze 206 - 33100 Udine (Italy)} }
\date{}
\begin{document}

\maketitle

\abstract{In analogy to the topological entropy for continuous endomorphisms of totally disconnected locally compact groups, we introduce a notion of topological entropy for continuous endomorphisms of locally linearly compact vector spaces. We study the fundamental properties of this entropy and we prove the Addition Theorem, showing that the topological entropy is additive with respect to short exact sequences. By means of Lefschetz Duality, we connect the topological entropy to the algebraic entropy in a Bridge Theorem.}

\bigskip
\noindent\emph{2010 MSC: Primary: 15A03; 15A04; 22B05. Secondary: 20K30; 37A35.}

\smallskip
\noindent\emph{Key words and phrases: linearly compact vector space, locally linearly compact vector space, algebraic entropy, continuous linear transformation, continuous endomorphism, algebraic dynamical system.}

\section{Introduction}\label{intro}

In \cite{AKM} Adler, Konheim and McAndrew introduced a notion of topological entropy for continuous self-maps of compact spaces. Later on, in \cite{B}, Bowen gave a definition of topological entropy for uniformly continuous self-maps of metric spaces, that was extended by Hood in \cite{H} to uniform spaces. This notion of entropy coincides with the one for compact spaces (when the compact topological space is endowed with the unique uniformity compatible with the topology), and it can be computed for any given continuous endomorphism $\phi:G\to G$ of a topological group $G$ (since $\phi$ turns out to be uniformly continuous with respect to the left uniformity of $G$). 
In particular, if $G$ is a totally disconnected locally compact group, by van Dantzig's Theorem (see~\cite{vD}) the family $\B_{gr}(G)=\{U\leq G\mid U \text{compact open}\}$ is a neighborhood basis at $0$ in $G$, and the topological entropy of the continuous endomorphism $\phi:G\to G$ can be computed as follows (see \cite{DSV,GBVirili}).
For a subset $F$ of $G$ and for every $n\in\N_+$, the \emph{$n$-th $\phi$-cotrajectory} of $F$ is $$C_n(\phi,F)=F\cap\phi^{-1}F\cap\ldots\cap\phi^{-n+1}F.$$ 
 the \emph{topological entropy of $\f$ with respect to $U\in\B_{gr}(G)$} is
$$H_{top}(\f,U)=\lim_{n\to\infty}\frac{1}{n}\log[U:C_n(\f,U)],$$
and the \emph{topological entropy} of $\phi$ is
$$h_{top}(\f)=\sup\{ H_{top}(\f,U)\mid U\in\B_{gr}(G)\}.$$

A fundamental property of the topological entropy is the so-called Addition Theorem: it holds for a topological group $G$, a continuous endomorphism $\phi:G\to G$ and a closed $\phi$-invariant (i.e., $\phi H\leq H$) normal subgroup of $G$, if
\begin{equation*}\label{AT-eq}
h_{top}(\phi)=h_{top}(\phi\restriction_H)+h_{top}(\overline\phi),
\end{equation*}
where $\overline\phi:G/H\to G/H$ is the continuous endomorphism induced by $\phi$.

The Addition Theorem for continuous endomorphisms of compact groups was deduced in \cite[Theorem 8.3]{Dik+Manolo} from the metric case proved in a more general setting in \cite[Theorem 19]{B}; the separable case was settled by Yuzvinski in \cite{Y}. Recently, in \cite{GBVirili}, the Addition Theorem was proved for topological automorphisms of totally disconnected locally compact groups; more precisely, taken $G$ a totally disconnected locally compact group, $\phi:G\to G$ a continuous endomorphism and $H$ a closed $\phi$-invariant subgroup of $G$,  if $\phi\restriction_H$ is surjective and the continuous endomorphism $\overline\phi:G/H\to G/H$ induced by $\phi$ is injective, then $$h_{top}(\phi)=h_{top}(\phi\restriction_H)+h_{top}(\overline\phi).$$
%
%
The validity of the Addition Theorem in full generality for continuous endomorphisms of locally compact groups remains an open problem, even in the totally disconnected (abelian) case.

\medskip
In this paper we introduce a notion of topological entropy $\ent^*$ for locally linearly compact vector spaces in analogy to the topological entropy $h_{top}$ for totally disconnected locally compact groups. We recall that a topological vector space $V$ over a discrete field $\K$ is a locally linearly compact vector space if it admits a neighborhood basis at $0$ of linearly compact open linear subspaces (see~\cite{Lef}). Denote by $\BV$ be the neighborhood basis at $0$ in $V$ consisting of all linearly compact open linear subspaces of $V$. 

\begin{definition}
Let $V$ be a locally linearly compact vector space and $\phi:V\to V$ a continuous endomorphism.
The \emph{topological entropy of $\f$ with respect to $U\in\BV$} is
\begin{equation}\label{eq:H*}
H^*(\f,U)=\lim_{n\to\infty}\frac{1}{n}\dim \frac{U}{C_n(\f,U)},
\end{equation}
and the \emph{topological  entropy} of $\f\colon V\to V$ is
\begin{equation*}\label{def:ent*}
\ent^*(\f)=\sup\{H^*(\f,U)\mid U\in\BV\}.
\end{equation*}
\end{definition}

The limit in \eqref{eq:H*} exists (see Proposition~\ref{prop:exist}) and $\ent^*$ is always zero on discrete vector spaces (see Corollary~\ref{cor:ent*discrete}); moreover, this entropy admits all the fundamental properties expected from an entropy function (see \S~\ref{fp-sec}).

\smallskip
One of the main results of the present paper is the Addition Theorem for locally linearly compact vector spaces and their continuous endomorphisms:

\begin{theorem}[Addition Theorem]\label{AT-intro}
Let $V$ be a locally linearly compact vector space, $\phi:V\to V$ a continuous endomorphism, $W$ a closed $\phi$-invariant linear subspace of $V$ and $\overline \phi:V/W\to V/W$ the continuous endomorphism induced by $\phi$. Then
$$\ent^*(\phi)=\ent^*(\phi\restriction_W)+\ent^*(\overline\phi).$$
\end{theorem}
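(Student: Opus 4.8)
The plan is to prove the two inequalities $\ent^*(\phi)\le\ent^*(\phi\restriction_W)+\ent^*(\overline\phi)$ and $\ent^*(\phi)\ge\ent^*(\phi\restriction_W)+\ent^*(\overline\phi)$ separately, after first arranging that all three entropies are computed along one and the same cofinal family of neighbourhoods. Writing $\pi\colon V\to V/W$ for the canonical projection, for every $U\in\BV$ one has $U\cap W\in\B(W)$ and $\pi(U)=(U+W)/W\in\B(V/W)$, and I would check that $\{U\cap W\mid U\in\BV\}$ and $\{\pi(U)\mid U\in\BV\}$ are cofinal in $\B(W)$ and $\B(V/W)$ respectively, using that $W$ is closed, so $U\cap W$ is linearly compact, and that $\pi$ is open. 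Together with the monotonicity of $U\mapsto H^*(-,U)$ collected in \S\ref{fp-sec}, this gives $\ent^*(\phi\restriction_W)=\sup_U H^*(\phi\restriction_W,U\cap W)$ and $\ent^*(\overline\phi)=\sup_U H^*(\overline\phi,\pi(U))$.

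The computational backbone is an exact additivity for a fixed $U$. Since $\phi W\subseteq W$ forces $\phi^{-1}W\supseteq W$, one gets $C_n(\phi\restriction_W,U\cap W)=C_n(\phi,U)\cap W$, so the inclusions of the cotrajectories into the ambient spaces yield a commutative diagram of short exact sequences whose rows are $0\to C_n(\phi,U)\cap W\to C_n(\phi,U)\to\pi(C_n(\phi,U))\to0$ and $0\to U\cap W\to U\to\pi(U)\to0$. All the relevant quotients are finite dimensional, being quotients of linearly compact spaces by open subspaces, and the snake lemma produces the exact sequence $0\to (U\cap W)/C_n(\phi\restriction_W,U\cap W)\to U/C_n(\phi,U)\to\pi(U)/\pi(C_n(\phi,U))\to0$. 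Taking dimensions, dividing by $n$ and letting $n\to\infty$ (the limits exist by Proposition~\ref{prop:exist}), I obtain for every $U$ the identity $H^*(\phi,U)=H^*(\phi\restriction_W,U\cap W)+\lim_n\tfrac1n\dim\tfrac{\pi(U)}{\pi(C_n(\phi,U))}$.

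The superadditivity is then immediate: since $\pi(C_n(\phi,U))\subseteq C_n(\overline\phi,\pi(U))$, the last term dominates $H^*(\overline\phi,\pi(U))$, whence $H^*(\phi,U)\ge H^*(\phi\restriction_W,U\cap W)+H^*(\overline\phi,\pi(U))$, and taking the supremum over $U\in\BV$ — where small $U$ simultaneously realise both suprema by the cofinality and monotonicity above — gives $\ent^*(\phi)\ge\ent^*(\phi\restriction_W)+\ent^*(\overline\phi)$. For subadditivity everything reduces to controlling the discrepancy $c_n:=\dim\tfrac{C_n(\overline\phi,\pi(U))}{\pi(C_n(\phi,U))}\ge0$, which (since $\pi^{-1}(C_n(\overline\phi,\pi(U)))=C_n(\phi,U+W)\supseteq W$) equals $\dim\tfrac{C_n(\phi,U+W)}{C_n(\phi,U)+W}$; concretely I must show $\lim_n c_n/n=0$, for then the fixed-$U$ identity upgrades to $H^*(\phi,U)=H^*(\phi\restriction_W,U\cap W)+H^*(\overline\phi,\pi(U))$ and the supremum yields the remaining inequality.

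I expect this last point to be the main obstacle. The quantity $c_n$ measures exactly the failure of a quotient cotrajectory $\overline v\in C_n(\overline\phi,\pi(U))$ to admit a lift lying in $C_n(\phi,U)$: lifting $\overline v$ to $v$ with $\phi^iv\in U+W$ and writing $\phi^iv=a_i+b_i$ with $a_i\in U$ and $b_i\in W$, the obstruction to correcting $v$ by an element of $W$ is the class of $(b_i\bmod U\cap W)_i$ modulo the image of $w\mapsto(\phi^iw\bmod U\cap W)_i$, and the one–step defects are governed by the finite dimensional space $(U+\phi U)/U$. The plan is to show that these defects do not accumulate linearly: I would either prove directly that the increments $\dim\tfrac{\pi(C_n(\phi,U))}{\pi(C_{n+1}(\phi,U))}$ and $\dim\tfrac{C_n(\overline\phi,\pi(U))}{C_{n+1}(\overline\phi,\pi(U))}$ agree for $n$ large, so that $c_n$ stabilizes and $\lim_n c_n/n=0$, or reduce, as in the locally compact group case of \cite{GBVirili}, to the situation where $\phi\restriction_W$ is surjective and $\overline\phi$ is injective, in which the lifting of cotrajectories succeeds up to a bounded dimensional error. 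Once $\lim_n c_n/n=0$ is secured, taking the supremum over $U$ together with the cofinality and monotonicity delivers $\ent^*(\phi)\le\ent^*(\phi\restriction_W)+\ent^*(\overline\phi)$, completing the proof.
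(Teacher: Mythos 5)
Your setup and the superadditivity half are sound: the cofinality statements are Equation~\eqref{eq:basis} of the paper, your short exact sequence for fixed $U$ is correct (via the modular law $U\cap(C_n(\phi,U)+W)=C_n(\phi,U)+(U\cap W)$ and Lemma~\ref{lem:cotraj}), and the resulting inequality $\ent^*(\phi)\geq\ent^*(\phi\restriction_W)+\ent^*(\overline\phi)$ is essentially the paper's Lemma~\ref{semient}. The genuine gap is in the subadditivity half: the claim your plan hinges on, namely $\lim_n c_n/n=0$ for every $U\in\BV$ (equivalently, the fixed-$U$ identity $H^*(\phi,U)=H^*(\phi\restriction_W,U\cap W)+H^*(\overline\phi,\pi(U))$), is \emph{false}. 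Take $V=\prod_{n\geq 0}\K$ (linearly compact), $\phi$ the left shift $(x_0,x_1,x_2,\dots)\mapsto(x_1,x_2,\dots)$, $W=\{x\in V\mid x_n=0\ \forall n\geq 1\}$ (closed, finite-dimensional, $\phi$-invariant since $\phi W=0$), and $U=\{x\in V\mid x_0=0\}\in\BV$. Then $C_n(\phi,U)=\{x\mid x_0=\dots=x_{n-1}=0\}$, so $H^*(\phi,U)=1$; but $U\cap W=0$ gives $H^*(\phi\restriction_W,U\cap W)=0$, and $U+W=V$ gives $\pi(U)=V/W$, hence $C_n(\overline\phi,\pi(U))=V/W$ and $H^*(\overline\phi,\pi(U))=0$. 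Here $c_n=\dim\frac{C_n(\phi,U+W)}{C_n(\phi,U)+W}=n-1$ grows linearly; your first repair strategy also fails, since the increments you compare are identically $1$ and $0$ for all $n$. The Addition Theorem survives only because $\ent^*(\overline\phi)=1$ is attained at \emph{smaller} members of $\B(V/W)$, e.g.\ $\pi(U')$ for $U'=\{x\mid x_0=x_1=0\}$; so any correct proof must genuinely exploit the supremum over $U$, not a per-$U$ identity.

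Your second fallback --- reducing to the case where $\phi\restriction_W$ is surjective and $\overline\phi$ is injective --- points in the right direction, but it is not a finishing touch: it is the bulk of the theorem, and you give no argument for it. This is precisely what the paper spends Sections~4 and~5 doing: reduction to linearly compact $V$ via the decomposition $V=V_c\oplus V_d$ (Proposition~\ref{prop:rescc}), passage to the surjective core (Proposition~\ref{thm:redepi}), and the inverse-limit functor $\caL$, which replaces $\phi$ by a topological automorphism with the same entropy (Theorem~\ref{thm:redauto}) compatibly with subspaces and quotients (Proposition~\ref{prop: L exact}). Only for an automorphism $\phi$ and a $\phi$-\emph{stable} $W$ does the splitting you want become exact, because then $\phi(M\cap W)=\phi M\cap W$; that identity is what makes the dimension count in Proposition~\ref{thm:ATauto} work, where the supremum is handled through the Limit-free Formula characterization $\ent^*(\phi)=\sup\{\dim\frac{\phi M}{M}\mid M\leq\phi M,\ M\ \text{linearly compact},\ \dim\frac{\phi M}{M}<\infty\}$ (Corollary~\ref{cor:ent*lc}) rather than through cotrajectories of a fixed $U$. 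As it stands, your proposal establishes only the inequality $\ent^*(\phi)\geq\ent^*(\phi\restriction_W)+\ent^*(\overline\phi)$.
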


In case $V$ is a locally linearly compact vector space over a discrete finite field $\mathbb F$, then $V$ is a totally disconnected locally compact abelian group (see Proposition~\ref{prop:tdlc}(b)) and $$h_{top}(\phi)=\ent^*(\phi)\cdot\log|\mathbb F|$$
(see Proposition~\ref{lem:tdlc}). So, with respect to the general problem of the validity of the Addition Theorem for the topological entropy $h_{top}$ of continuous endomorphisms of locally compact groups, Theorem~\ref{AT-intro} covers the case of those totally disconnected locally compact abelian groups that are also locally linearly compact vector spaces.

To prove the Addition Theorem, we restrict first to the case of continuous endomorphisms of linearly compact vector spaces (see \S~\ref{s:rlc}), and then to topological automorphisms (see \S~\ref{s:redauto}). The technique used for the latter reduction was suggested us by Simone Virili; in fact, Virili and Salce used it in another context in \cite{SV} giving credit to Gabriel \cite{Gabriel}.
Finally, in Section~\ref{s:AT}, we prove the Addition Theorem for topological automorphisms (see Proposition~\ref{thm:ATauto}), so that we can deduce the Addition Theorem for all continuous endomorphisms of linearly compact vector spaces (see Proposition~\ref{prop:ATlc}) and then the Addition Theorem in full generality.

\smallskip
A fundamental tool in the proof of the Addition Theorem is the so-called Limit-free Formula (see Proposition~\ref{prop:lff}), that permits to compute the topological entropy avoiding the limit in the definition in Equation~\eqref{eq:H*}. Indeed, taken $V$ a locally linearly compact vector space and $\phi:V\to V$ a continuous endomorphism, for every $U\in\BV$ we construct a linearly compact linear subspace $U_+$ of $V$ (see Definition~\ref{def:uplus}) such that $U_+$ is an open linear subspace of $\phi U_+$ of finite codimension and
\begin{equation*}
H^*(\phi,U)=\dim \frac{\phi U_+}{U_+}.
\end{equation*}
This result is the counterpart of the same formula for the topological entropy $h_{top}$ of continuous endomorphisms of totally disconnected locally compact groups given in \cite[Proposition 3.9]{GBVirili} (see also \cite{DGB-lff} for the compact case and \cite{GB} for the case of topological automorphisms).
Note that a first Limit-free Formula was sketched by Yuzvinski in \cite{Y} in the context of the algebraic entropy for discrete abelian groups; this was later proved in a slightly more general setting in \cite{DGB-lff} (and extended in \cite[Lemma 5.4]{yuzapp} to a Limit-free Formula for the intrinsic algebraic entropy of automorphisms of abelian groups).

\medskip
In \cite{AKM} Adler, Konheim and McAndrew also sketched a definition of algebraic entropy for endomorphisms of abelian groups, that was later reconsidered by Weiss in \cite{Weiss}, and recently by Dikranjan, Goldsmith, Salce and Zanardo for torsion abelian groups in \cite{Aeag}.
Later on, using the definitions of algebraic entropy given by Peters in \cite{Peters1,Peters2}, the algebraic entropy $h_{alg}$ was extended in several steps (see \cite{DGBpet,Virili}) to continuous endomorphisms of locally compact abelian groups.

In \cite{Weiss} Weiss connected, in a so-called Bridge Theorem, the topological entropy $h_{top}$ of a continuous endomorphism $\phi:G\to G$ of a totally disconnected compact abelian group $G$ to the algebraic entropy $h_{alg}$ of the dual endomorphism $\phi{^\wedge}:G^\wedge\to G^\wedge$ of the Pontryagin dual $G^\wedge$ of $G$, by showing that 
$$h_{top}(\phi)=h_{alg}(\phi^\wedge).$$
The same connection was given by Peters in \cite{Peters1} for topological automorphisms of metrizable compact abelian groups; moreover, these results were recently extended to continuous endomorphisms of compact abelian groups in \cite{DGB-BT}, to continuous endomorphisms of totally disconnected locally compact abelian groups in \cite{GBD}, and to topological automorphisms of locally compact abelian groups in \cite{Virili-BT} (in a much more general setting).
The problem of the validity of the Bridge Theorem in the general case of continuous endomorphisms of locally compact abelian groups is still open.

\smallskip
In \cite{GBSalce} the dimension entropy $\ent_{\dim}$ was studied for endomorphisms of discrete vector spaces, as a particular interesting case of the $i$-entropy $\ent_i$ for endomorphisms of modules over a ring $R$ and an invariant $i$ of $\mathrm{Mod}(R)$ introduced in \cite{SZ} as another generalization of Weiss' entropy.
The dimension entropy is extended in \cite{CGB} to locally linearly compact vector spaces, as follows. 
Let $\f\colon V\to V$ be a continuous endomorphism of a locally linearly compact vector space $V$. For every $U\in\BV$ and $n\in\N_+$, the \emph{$n$-th $\phi$-trajectory of $U$} is $$T_n(\f,U)=U+\f U+\ldots+\f^{n-1}U.$$
The \emph{algebraic entropy of $\f$ with respect to $U$} is
$$H(\f,U)=\lim_{n\to\infty} \frac{1}{n}\dim\frac{T_n(\f,U)}{U},$$
and the \emph{algebraic entropy of $\f$} is
$$\ent(\f)=\sup\{H(\f,U)\mid U\in\BV\}.$$

The second main result of this paper is the following Bridge Theorem, proved in Section~\ref{s:bridge}, connecting the topological entropy $\ent^*$ with the algebraic entropy $\ent$ from \cite{CGB} by means of Lefschetz Duality (see \S~\ref{ss:lefdual}). For a locally linearly compact vector space $V$ and a continuous endomorphism $\phi:V\to V$, we denote by $\widehat V$ the dual of $V$ and by $\widehat\phi:\widehat V\to \widehat V$ the dual endomorphism of $\phi$ with respect to Lefschetz Duality.

\begin{theorem}[Bridge Theorem]\label{BT-intro}
Let $V$ be a locally linearly compact vector space and $\phi:V\to V$ a continuous endomorphism. Then
$$\ent^*(\phi)=\ent(\widehat\phi).$$
\end{theorem}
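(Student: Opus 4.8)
The plan is to prove the Bridge Theorem by relating the topological entropy $\ent^*(\phi)$ to the algebraic entropy $\ent(\widehat\phi)$ through the explicit correspondence between cotrajectories and trajectories under Lefschetz Duality. The central idea is that Lefschetz Duality sends a locally linearly compact vector space $V$ to its dual $\widehat V$ (again locally linearly compact), reversing inclusions of linear subspaces and interchanging linearly compact open subspaces with their annihilators. The first step is to make precise, for $U\in\BV$, how the annihilator $U^\perp\subseteq\widehat V$ behaves: since $U$ is a linearly compact open subspace, $U^\perp$ should again be a linearly compact open subspace of $\widehat V$, giving a bijective correspondence $\BV\to\mathcal B(\widehat V)$ that reverses containment and converts codimensions into codimensions (so $\dim(U/C)=\dim(C^\perp/U^\perp)$ whenever $C\leq U$).

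Next I would translate the dynamics. The key algebraic fact is that duality turns $\phi^{-1}$ into $\widehat\phi$ on annihilators, that is $(\phi^{-1}F)^\perp=\widehat\phi(F^\perp)$ (and similarly $(\phi F)^\perp=\widehat\phi^{-1}(F^\perp)$), so that the defining operations of the two entropies are exchanged. Applying this iteratively to the $n$-th cotrajectory $C_n(\phi,U)=U\cap\phi^{-1}U\cap\cdots\cap\phi^{-n+1}U$ and using that duality converts intersections of subspaces into sums of their annihilators, I expect to obtain
\begin{equation*}
C_n(\phi,U)^\perp=U^\perp+\widehat\phi\,U^\perp+\cdots+\widehat\phi^{\,n-1}U^\perp=T_n(\widehat\phi,U^\perp).
\end{equation*}
Combining this with the codimension-preserving property of the annihilator correspondence yields the equality of the finite-stage quantities,
\begin{equation*}
\dim\frac{U}{C_n(\phi,U)}=\dim\frac{C_n(\phi,U)^\perp}{U^\perp}=\dim\frac{T_n(\widehat\phi,U^\perp)}{U^\perp}.
\end{equation*}
Dividing by $n$ and passing to the limit gives $H^*(\phi,U)=H(\widehat\phi,U^\perp)$ for every $U\in\BV$.

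Finally, taking suprema over $U\in\BV$ on the left and over $U^\perp\in\mathcal B(\widehat V)$ on the right, the bijectivity of the annihilator correspondence ensures that both suprema range over the full neighborhood basis, so $\ent^*(\phi)=\sup_U H^*(\phi,U)=\sup_U H(\widehat\phi,U^\perp)=\ent(\widehat\phi)$, which is the claim. I expect the main obstacle to lie in the careful setup of Lefschetz Duality in the first step: one must verify that the annihilator of a linearly compact open subspace is again linearly compact and open, that the correspondence is order-reversing and codimension-preserving on finite-codimension pairs, and above all that it commutes correctly with the endomorphism so that $\widehat\phi$ acts on $\widehat V$ exactly as the transpose, sending the cotrajectory filtration to the trajectory filtration. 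Once these duality properties are established, the remaining computation is formal and the limit exchange is routine; the real content is in the topological-algebraic dictionary provided by the duality.
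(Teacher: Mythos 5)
Your proposal is correct and follows essentially the same route as the paper's own proof: the annihilator correspondence $U\mapsto U^\perp$ between $\BV$ and $\mathcal B(\widehat V)$ (the paper's Lemma~\ref{B(G)}), the identity $(\phi^{-n}U)^\perp=\widehat\phi^{\,n}(U^\perp)$ (Lemma~\ref{perp}) combined with the intersection-to-sum rule for annihilators (Lemma~\ref{lem:intsum}) to get $C_n(\phi,U)^\perp=T_n(\widehat\phi,U^\perp)$, and the codimension-preservation $\dim U/C_n(\phi,U)=\dim T_n(\widehat\phi,U^\perp)/U^\perp$ (via Lemma~\ref{A/B} and self-duality of finite-dimensional spaces), followed by taking limits and suprema. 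The only point to make fully rigorous is the one you flagged yourself: the intersection-to-sum step needs the sum $T_n(\widehat\phi,U^\perp)$ to be closed (it is, being a finite sum of linearly compact subspaces), which is exactly how the paper's cited lemmas handle it.
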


In analogy to the adjoint algebraic entropy for abelian groups from \cite{DGBS}, the adjoint dimension entropy was considered in \cite{GBSalce}: for a discrete vector space $V$ let $\mathcal C(V)=\{N\leq V\mid \dim V/N<\infty\}$; for an endomorphism $\phi:V\to V$, the \emph{adjoint dimension entropy of $\phi$ with respect to $N$} is $$H^*_{\dim}(\phi,N)=\lim_{n\to\infty}\frac{1}{n}\dim\frac{V}{C_n(\phi,N)},$$ and the \emph{adjoint dimension entropy} is $$\ent_{\dim}^*(\phi)=\sup\{H^*(\phi,N)\mid N\in\mathcal C(V)\}.$$
A Bridge Theorem (see~\cite[Theorem~6.12]{GBSalce}) shows that, for $\phi^*:V^*\to V^*$ the dual endomorphism of the algebraic dual $V^*$ of $V$,
\begin{equation}\label{BT*}
\ent^*_{\dim}(\phi)=\ent_{\dim}(\phi^*).
\end{equation}

Unfortunately, as a consequence of this Bridge Theorem, the adjoint dimension entropy $\ent_{\dim}^*$ is proved to take only the values $0$ and $\infty$ (see \cite[Corollary 6.16]{GBSalce}). So, imitating the same approach used in \cite{GB*} for the adjoint algebraic entropy, a motivating idea to introduce the topological entropy $\ent^*$ in this paper was to ``topologize'' $\ent_{\dim}^*$ so that it admits all possible values in $\N\cup\{\infty\}$. In fact, if $V$ is a linearly compact vector space and $\phi:V\to V$ a continuous endomorphism, then $\BV\subseteq \mathcal C(V)$, furthermore $H^*(\phi,U)=H^*_{\dim}(\phi,U)$ for $U\in\BV$ (see Lemma~\ref{H*lc}), and so $\ent^*(\phi)\leq \ent^*_{\dim}(\phi)$.

Moreover, if $V$ is a discrete vector space and $\phi:V\to V$ is an endomorphism, then $\widehat V$ is linearly compact; it is also known from \cite{CGB} that $\ent(\phi)=\ent_{\dim}(\phi)$, and so Theorem~\ref{BT-intro} gives the equality 
\begin{equation}\label{BTlc}
\ent_{\dim}(\phi)=\ent^*(\widehat\phi),
\end{equation}
that appears to be more natural with respect to that in Equation~\eqref{BT*}.

\medskip
We conclude by leaving an open question about the so-called Uniqueness Theorem. Indeed, a Uniqueness Theorem for the topological entropy in the category of compact groups and continuous homomorphisms was proved by Stojanov in \cite{St}. The same result requires a shorter list of axioms restricting to compact abelian groups (see \cite[Corollary 3.3]{DGB-BT}).


We would say that the Uniqueness Theorem holds for the topological entropy $\ent^*$ in the category of all locally linearly compact vector spaces over a discrete field $\K$ and their continuous homomorphisms, if $\ent^*$ was the unique collection of functions $\ent_V^*:\End(V)\to\N\cup\{\infty\}$, $\phi\mapsto\ent^*(\phi)$,
satisfying for every locally linearly compact vector space $V$ over $\K$: Invariance under conjugation (see Proposition~\ref{prop:basic prop}(a)), Continuity for inverse limits (see Proposition~\ref{prop:basic prop}(e)), Addition Theorem, and $\ent^*({}_F\beta)=\dim F$ for any finite-dimensional vector space $F$ over $\K$, where $V=\bigoplus_{n=-\infty}^0 F\oplus \prod_{n=1}^\infty F$ is endowed with the topology inherited from the product topology of $\prod_{n\in\Z}F$, and ${}_F\beta:V \to V$, $(x_n)_{n\in\Z}\mapsto (x_{n+1})_{n\in\Z}$ is the left Bernoulli shift (see Example~\ref{bernoulli}).

\begin{question}
Does the Uniqueness Theorem hold for the topological entropy $\ent^*$ in the category of locally linearly compact vector spaces over a discrete field $\K$?
\end{question}

The validity of the Uniqueness Theorem for $\ent^*$ in the category of all linearly compact vector spaces over a discrete field $\K$ follows from the Uniqueness Theorem for the dimension entropy $\ent_{\dim}$ in the category of all discrete vector spaces over $\K$ proved in \cite{GBSalce} and Equation~\eqref{BTlc}.

\section{Background on locally linearly compact vector spaces}

\subsection{Locally linearly compact vector spaces}\label{ss:llc}

Fix an arbitrary field $\K$ endowed with the discrete topology. A Hausdorff topological $\K$-vector space $V$ is \emph{linearly topologized} if it admits a neighborhood basis at $0$ consisting of linear subspaces of $V$. Clearly, a linear subspace $W$ of $V$ with the induced topology is still linearly topologized, and the quotient vector space $V/W$ endowed with the quotient topology turns out to be linearly topologized whenever $W$ is also closed in $V$. A finite-dimensional linearly topologized vector space is necessarily discrete.

A \emph{linear variety} of a linearly topologized vector space $V$ is a coset $v+W$, where $W$ is a linear subspace of $V$ and $v\in V$; $v+W$ is \emph{closed} if $W$ is  closed in $V$. 
Following Lefschetz \cite{Lef}, a \emph{linearly compact space}  $V$ is a linearly topologized vector space such that any collection of closed linear varieties of $V$ with the finite intersection property has non-empty intersection. We recall the following known properties that we frequently use in the paper. 


\begin{proposition}[{\cite[page 78]{Lef}, \cite[Propositions 2 and 9]{Mat},\cite[Theorem~28.5]{Warner}}]\label{prop:lc properties}
Let $V$ be a linearly topologized vector space.
\begin{itemize}
\item[(a)] If $W$ is a linearly compact linear subspace of $V$, then $W$ is closed.
\item[(b)] If $V$ is linearly compact and $W$ is a closed linear subspace of $V$, then $W$ is linearly compact;
\item[(c)] If $W$ is another linearly topologized vector space and $f:V\to W$ is continuous homomorphism and $V$ is linearly compact, then $W$ is linearly compact as well.
\item[(d)] If $V$ is discrete, then $V$ is linearly compact if and only if $V$ has finite dimension; hence, if $V$ has finite dimension, then $V$ is linearly compact.
\item[(e)] If $W$ is a closed linear subspace of $V$, then $V$ is linearly compact if and only if $W$ and $V/W$ are linearly compact.
\item[(f)] The direct product of linearly compact vector spaces is linearly compact.
\item[(g)] An inverse limit of linearly compact vector spaces is linearly compact.
\item[(h)] If $V$ is linearly compact, then $V$ is complete.
\end{itemize}
\end{proposition}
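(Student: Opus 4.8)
The plan is to derive all eight properties from the single definition of linear compactness via the finite intersection property (FIP) for closed linear varieties, organizing them in a dependency order so the later parts reuse the earlier ones. I would first dispatch (a)--(c) directly. For (a), to see that a linearly compact $W\le V$ is closed, take $x\in\overline W$; as $N$ ranges over a basis of closed-subspace neighborhoods of $0$, the traces $(x+N)\cap W$ are closed linear varieties of $W$ whose finite intersections equal $(x+(N_1\cap\cdots\cap N_k))\cap W$, nonempty since $x$ is adherent to $W$, so they have the FIP; linear compactness of $W$ yields a point $w\in W$ lying in every $x+N$, and Hausdorffness ($\bigcap N=0$) forces $w=x$. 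For (b), any closed linear variety of a closed subspace $W$ is also a closed linear variety of $V$, so a FIP family in $W$ is one in $V$ and inherits a nonempty intersection. For (c), pull a FIP family of closed linear varieties of the image $f(V)$ back along the continuous linear $f$: preimages of cosets are cosets, closed by continuity, with the FIP preserved, so an intersection point upstairs maps to one in the image.

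For (d), finite dimension gives linear compactness, since a single point and hence $\K$ is trivially linearly compact and finite products are handled by (f). For the converse I would exhibit, in a discrete space containing an infinite independent family $(e_n)_{n\in\N}$, the decreasing cosets $A_n=(e_1+\cdots+e_n)+\langle e_{n+1},e_{n+2},\ldots\rangle$; these are closed linear varieties forming a chain, hence with the FIP, whose intersection would be the infinitely supported formal sum $\sum_n e_n\notin V$, so it is empty, contradicting linear compactness.

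The parts (g) and (h) are short once (f) is available. An inverse limit is a closed subspace of the product $\prod_j V_j$, cut out by the equalizer conditions $p_{jk}x_j=x_k$ (closed since the $V_j$ are Hausdorff), so (g) follows from (f) and (b); for (h), a Cauchy filter produces, for each closed neighborhood $N$, a coset $x_N+N$ containing a filter member, and these cosets have the FIP, so their common point is the desired limit. The two genuinely substantial parts are the converse of (e) and the product theorem (f). For (e) I would argue in two layers: given a FIP family $\{L_i\}$ of closed linear varieties of $V$, push it forward to $V/W$ and use linear compactness there to pin down a coset $v+W$ met by the closures of all the images; then intersect each $L_i$ with the fiber $v+W$, which is (homeomorphic to) the linearly compact $W$, and apply its linear compactness to land a point in $\bigcap_i L_i$. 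For (f) I would refine the given family to a maximal FIP family of closed linear varieties (Zorn), project it to each factor $V_j$, extract a coordinate $a_j$ from the FIP family of closures in $V_j$, and verify that $a=(a_j)$ lies in every member.

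The main obstacle is precisely this pair: in (e) the quotient map need not send closed varieties to closed varieties, so one must pass to closures in $V/W$ and then recover the missing point inside a single fiber; and in (f) the delicate point is showing that the coordinatewise limit genuinely lies in every variety of the family, for which one uses that basic neighborhoods constrain only finitely many coordinates together with the maximality of the refined FIP family. Everything else reduces, cleanly and uniformly, to these FIP manipulations.
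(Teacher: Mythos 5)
The paper itself offers no proof of this proposition: it imports all eight statements from Lefschetz, Matlis and Warner, so your proposal has to be judged on its own terms rather than against an argument in the text. Most of it is sound. Parts (a), (b), (c) are correct FIP arguments (and you rightly read (c) as a statement about the image $f(V)$, which is how the paper actually uses it, e.g.\ in Lemma~\ref{lem:kerim}); in (d) the chain $A_n=(e_1+\cdots+e_n)+\langle e_{n+1},e_{n+2},\ldots\rangle$ does have empty intersection, since a common point would need coefficient $1$ on every $e_i$; your (f) is the standard Tychonoff-style proof via a maximal FIP family, and the two points you flag (maximality, plus the fact that a basic neighborhood $a+\prod_j N_j$ is a finite intersection of preimages $p_j^{-1}(a_j+N_j)$, each of which belongs to the maximal family) are exactly what make it work; (g) and (h) are fine.

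The genuine gap is in the converse implication of (e), precisely at the step ``intersect each $L_i$ with the fiber $v+W$ \dots\ and apply its linear compactness to land a point in $\bigcap_i L_i$''. Writing $\pi\colon V\to V/W$ for the quotient map, linear compactness of $V/W$ only yields $\bar v\in\bigcap_i\overline{\pi(L_i)}$, and $\bar v\in\overline{\pi(L_i)}$ does \emph{not} imply $(v+W)\cap L_i\neq\emptyset$; it only implies that the fiber meets every thickening $L_i+N$, where $N$ runs over the open linear subspaces of $V$. (You cannot rescue this by arguing that $\pi(L_i)$ is closed because $L_i$ is linearly compact: that would presuppose the linear compactness of $V$, which is what is being proved.) So the family $\{(v+W)\cap L_i\}_i$ may contain empty members, and linear compactness of the fiber cannot be applied to it. The standard repair: first close $\{L_i\}$ under finite intersections, then apply linear compactness of $v+W\cong W$ to the closures of the varieties $(v+W)\cap(L_i+N)$, indexed by both $i$ and $N$. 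These are nonempty exactly because $\bar v\in\overline{\pi(L_i)}$; they have the FIP because $(L_i\cap L_j)+(N\cap N')\subseteq(L_i+N)\cap(L_j+N')$; and any common point $x$ satisfies $x\in\overline{L_i+N}=L_i+N$ for every $N$ (each $L_i+N$ is a union of cosets of $N$, hence clopen), so that $x\in\bigcap_N(L_i+N)=\overline{L_i}=L_i$ for every $i$. With this replacement your outline of (e), and hence the whole proposal, goes through.
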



The following result ensures that a continuous homomorphism $\f:V\to W$ of linearly topologized vector spaces is also open whenever $V$ is linearly compact.

\begin{proposition}[\protect{\cite[Proposition 1.1(v)]{conn}}] \label{prop:omt}
Let $V$ be a linearly compact space and $W$ a linearly topologized space. If $f\colon V\to W$ is a continuous homomorphism, then $f:V\to fV$ is open. 
In particular, any continuous bijective homomorphism $f\colon V\to W$ is a topological isomorphism.
\end{proposition}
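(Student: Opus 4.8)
The plan is to verify openness on a neighbourhood basis. Since $V$ is linearly topologized it has a neighbourhood basis at $0$ consisting of open linear subspaces, and since $f$ is a homomorphism it carries a coset $v+U$ to $f(v)+f(U)$; hence to prove that $f\colon V\to fV$ is open it suffices to show that $f(U)$ is open in $fV$ for every open linear subspace $U$ of $V$. Indeed, an arbitrary open subset of $V$ is a union of such cosets, and its image is then a union of open subsets of $fV$.

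First I would record that such a $U$ is also closed: its complement $V\setminus U=\bigcup_{v\notin U}(v+U)$ is a union of translates of the open set $U$, hence open. By Proposition~\ref{prop:lc properties}(b) the closed subspace $U$ of the linearly compact space $V$ is itself linearly compact, so by part~(c) the image $f(U)$ is linearly compact, and therefore closed in $W$ by part~(a); in particular $f(U)$ is closed in $fV$.

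Next I would bound the codimension of $f(U)$ in $fV$. Because $U$ is open, the quotient $V/U$ is discrete, and it is linearly compact by Proposition~\ref{prop:lc properties}(c) applied to the continuous surjective quotient map $V\to V/U$; hence $V/U$ is finite-dimensional by part~(d). The assignment $v+U\mapsto f(v)+f(U)$ defines a well-defined surjective linear map $V/U\to fV/f(U)$, so $\dim\bigl(fV/f(U)\bigr)\le\dim(V/U)<\infty$.

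Finally I would combine the two facts. Since $f(U)$ is closed in $fV$, the quotient $fV/f(U)$ is a (Hausdorff) linearly topologized vector space, and it is finite-dimensional by the previous step; as recalled earlier, a finite-dimensional linearly topologized vector space is necessarily discrete. Thus $\{0\}$ is open in $fV/f(U)$, which is exactly to say that $f(U)$ is open in $fV$, completing the proof that $f$ is open. The ``in particular'' clause is then immediate: a continuous bijective homomorphism $f\colon V\to W$ satisfies $fV=W$ and is open, so its inverse is continuous and $f$ is a topological isomorphism. The only genuinely delicate point is the passage from ``closed of finite codimension'' to ``open'', which rests precisely on the discreteness of finite-dimensional linearly topologized spaces; everything else is bookkeeping with Proposition~\ref{prop:lc properties}.
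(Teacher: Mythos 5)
Your argument is correct, but note that the paper itself offers no proof to compare against: Proposition~\ref{prop:omt} is simply imported with a citation to Conn's book. What you have produced is therefore a self-contained proof of a fact the authors treat as a black box, and it uses only ingredients already recorded in the paper: the reduction of openness to images of open linear subspaces (legitimate, since cosets of open linear subspaces form a basis of the topology and $f(v+U)=f(v)+f(U)$), the chain ``open $\Rightarrow$ closed $\Rightarrow$ linearly compact $\Rightarrow$ image linearly compact $\Rightarrow$ image closed'' via parts (a), (b), (c) of Proposition~\ref{prop:lc properties}, the finite-dimensionality of $V/U$ via parts (c) and (d), and finally the passage from ``closed of finite codimension'' to ``open'' via the discreteness of finite-dimensional Hausdorff linearly topologized spaces, which is exactly where the Hausdorffness of $fV/f(U)$ (guaranteed by closedness of $f(U)$) is needed and which you correctly flag as the delicate point. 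Each of these steps checks out, including the well-definedness and surjectivity of the induced map $V/U\to fV/f(U)$ and the final pullback of $\{0\}$ along the continuous quotient projection. The ``in particular'' clause is handled correctly as well. What your approach buys over the paper's is transparency: the result is seen to follow purely from the elementary structure theory of linearly compact spaces already quoted in Proposition~\ref{prop:lc properties}, rather than from an external reference whose setting (transitive Lie algebras) is quite far from the present one.
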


Recall that a \emph{linear filter base} $\ca{N}$ of a linearly compact vector space $V$ is a non-empty family of linear subspaces of $V$ satisfying
$$\forall U,W\in\ca{N},\ \exists Z\in\ca{N},\ \text{such that}\ Z\leq U\cap W.$$

\begin{theorem}[\protect{\cite[Theorem 28.20]{Warner}}]\label{thm:warner}
Let $\ca{N}$ be a linear filter base of a linearly compact vector space $V$.
\begin{enumerate}[(a)]
\item If $W$ is a linearly topologized vector space and $f:V\to W$ a continuous homomorphism, then
$$f\left(\bigcap_{N\in\ca{N}}\overline N\right)=\bigcap_{N\in\ca{N}}\overline{f N}.$$
\item If each member of $\ca{N}$ is closed and if $M$ is a closed linear subspace of $V$, then
$$\bigcap_{N\in\ca{N}}(M+N)=M+\bigcap_{N\in\ca{N}} N.$$
\end{enumerate}
\end{theorem}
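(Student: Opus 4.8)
The plan is to prove part~(a) directly from the defining finite-intersection property of linear compactness, and then to deduce part~(b) by applying part~(a) to the canonical projection $\pi\colon V\to V/M$. In both parts one inclusion is immediate and needs no compactness: for~(a), if $x\in\bigcap_{N}\overline N$ then $f(x)\in f(\overline N)\subseteq\overline{fN}$ for every $N$ by continuity, so $f\left(\bigcap_N\overline N\right)\subseteq\bigcap_N\overline{fN}$; for~(b), since $\bigcap_N N\subseteq N$ we have $M+\bigcap_N N\subseteq M+N$ for each $N$, whence $M+\bigcap_N N\subseteq\bigcap_N(M+N)$. The content is entirely in the reverse inclusions.

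For the reverse inclusion in~(a) I would fix $y\in\bigcap_N\overline{fN}$ and produce a preimage of $y$ lying in every $\overline N$. For each $N\in\ca N$ the subspace $\overline N$ is closed in the linearly compact space $V$, hence linearly compact by Proposition~\ref{prop:lc properties}(b), so $f(\overline N)$ is linearly compact by Proposition~\ref{prop:lc properties}(c) and therefore closed by Proposition~\ref{prop:lc properties}(a). Since $fN\subseteq f(\overline N)$ and $f(\overline N)$ is closed, $\overline{fN}\subseteq f(\overline N)$, so $y\in f(\overline N)$; in particular $f^{-1}(y)$ is nonempty and is a closed linear variety (a coset of the closed subspace $\ker f$), and each $\overline N\cap f^{-1}(y)$ is a nonempty closed linear variety. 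The family $\{\overline N\cap f^{-1}(y)\mid N\in\ca N\}$ has the finite intersection property: given $N_1,\dots,N_k\in\ca N$, iterating the filter-base condition yields $N\in\ca N$ with $N\leq N_1\cap\dots\cap N_k$, so $\overline N\cap f^{-1}(y)\subseteq\bigcap_{i}(\overline{N_i}\cap f^{-1}(y))$, which is thus nonempty. By linear compactness of $V$ there is $x\in\bigcap_N(\overline N\cap f^{-1}(y))$; then $x\in\bigcap_N\overline N$ and $f(x)=y$, giving $y\in f\left(\bigcap_N\overline N\right)$.

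To deduce~(b) I would take $\pi\colon V\to V/M$, a continuous homomorphism onto the linearly compact Hausdorff space $V/M$ (Proposition~\ref{prop:lc properties}(e)). Each closed $N$ is linearly compact, so $\pi(N)$ is linearly compact and hence closed in $V/M$; thus $\overline N=N$ and $\overline{\pi N}=\pi N$. Applying~(a) to $\pi$ gives $\pi\left(\bigcap_N N\right)=\bigcap_N\pi N$. Now if $x\in\bigcap_N(M+N)$ then, since $M=\ker\pi$, we have $\pi(x)\in\pi(M+N)=\pi(N)$ for every $N$, so $\pi(x)\in\bigcap_N\pi N=\pi\left(\bigcap_N N\right)$; choosing $z\in\bigcap_N N$ with $\pi(z)=\pi(x)$ yields $x-z\in M$ and hence $x=(x-z)+z\in M+\bigcap_N N$.

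I expect the main obstacle to be organizing the reverse inclusion of~(a): the crux is recognizing that the $\overline N\cap f^{-1}(y)$ form the right family of closed linear varieties to feed into the compactness axiom, which forces one to establish that $y$ actually lies in $f(\overline N)$ (not merely in $\overline{fN}$) and that the family is downward directed. All three facts rest on the permanence properties in Proposition~\ref{prop:lc properties}—that closed subspaces and continuous images of linearly compact spaces are again linearly compact, hence closed—so the real work is in assembling these, after which the defining finite-intersection property of $V$ finishes the argument immediately.
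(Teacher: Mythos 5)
Your proof is correct. Note that the paper offers no proof of this statement at all: it is quoted verbatim from Warner's \emph{Topological rings} (Theorem 28.20), so there is no internal argument to compare against. Your part~(a) is exactly the standard compactness argument (the cosets $\overline N\cap f^{-1}(y)$ form a downward-directed family of nonempty closed linear varieties, and the defining finite-intersection property of $V$ produces the required preimage), and all the permanence facts you invoke---closed subspaces, continuous images, and hence closedness of $f(\overline N)$---are available in Proposition~\ref{prop:lc properties}, with $\ker f$ and $f^{-1}(y)$ closed because linearly topologized spaces are Hausdorff by the paper's convention. Your deduction of~(b) from~(a) via $\pi\colon V\to V/M$ is a clean and valid alternative to the usual direct argument (which applies the compactness axiom to the family $\{(x+M)\cap N\mid N\in\ca{N}\}$); the two routes are of essentially equal length, and yours has the small advantage of reusing~(a) rather than repeating the compactness argument.
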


\begin{remark}\label{rem:ab5}
In \cite{Gr} Grothendieck introduced axioms for an abelian category $\ca{A}$ concerning the existence and some properties of infinite direct sums and products. In particular, we are interested in the following axiom.

\medskip
\noindent (Ab5*) The category $\ca{A}$ is complete and if $A$ is an object in $\mathcal A$, $\{A_i\}_{i\in I}$ is a lattice of subobjects of $A$ and $B$ is any subobject of $A$, then
$$\bigcap_{i\in I}(B+A_i)=B+\bigcap_{i\in I} A_i.$$
If the abelian category $\ca{A}$ satisfies the axiom (Ab5*), then the inverse limit functor from the category of inverse systems on $\ca{A}$ to $\ca{A}$ is an exact additive functor (see \cite[\S 1.5, \S 1.8]{Gr}).

Now let us consider the complete abelian category $\LC$ of all linearly compact $\K$-vector spaces. The subobjects of a linearly compact vector space $V$ are the closed linear subspaces of $V$. Thus, by Theorem~\ref{thm:warner}(a), the category $\LC$  satisfies the axiom (Ab5*), and so the corresponding inverse limit functor is exact.
\end{remark}

A topological $\K$-vector space $V$ is \emph{locally linearly compact} (briefly, l.l.c.\!) if there exists a neighborhood basis at $0$ in $V$ consisting of linearly compact open linear subspaces of $V$ (see \cite{Lef}). In particular, every l.l.c.\!  vector space $V$ is linearly topologized.  The structure of l.l.c.\! vector spaces is described by the following result.

\begin{theorem}[\protect{\cite[(27.10), page 79]{Lef}}] \label{thm:dec}
A linearly topologized vector space $V$ is l.l.c.\! if and only if $V\cong_{top} V_c\oplus V_d$, where $V_c$ is a linearly compact linear subspace and $V_d$ is a discrete linear subspace of $V$. In particular, $V_c\in\BV$.
\end{theorem}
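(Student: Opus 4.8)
The plan is to prove the two implications separately, with the forward direction carrying the real content. For the easy backward direction, suppose $V\cong_{top}V_c\oplus V_d$ with $V_c$ linearly compact and $V_d$ discrete. Since $V_d$ is discrete, the subspace $V_c=V_c\oplus 0$ is open in $V$, and a neighbourhood basis at $0$ in $V$ is given by the linear subspaces $U\oplus 0$, with $U$ ranging over the open linear subspaces of $V_c$. In a linearly compact space (which is by definition linearly topologized) the open linear subspaces form a neighbourhood basis at $0$, and each of them, being open, is also closed, hence linearly compact by Proposition~\ref{prop:lc properties}(b). Thus each $U\oplus 0\cong U$ is a linearly compact open linear subspace of $V$, so these sets witness that $V$ is l.l.c.

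For the forward direction, assume $V$ is l.l.c.\ and fix any $V_c\in\BV$, which is nonempty by the definition of l.l.c. I would first pass to the quotient $V/V_c$: since $V_c$ is open, $V/V_c$ is discrete, hence a discrete $\K$-vector space. Choosing a basis of $V/V_c$ and lifting it to elements of $V$, let $V_d$ be their $\K$-span. A routine check using the linear independence of the chosen basis shows that $V=V_c\oplus V_d$ holds algebraically.

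The crux is then to upgrade this algebraic decomposition to a topological one, and the single ingredient that makes everything work is the openness of $V_c$. First, $V_d$ is discrete: since $V_c$ is open in $V$, the intersection $V_c\cap V_d=\{0\}$ is open in $V_d$, which forces $V_d$ to be discrete. Next, I would consider the addition map $a\colon V_c\oplus V_d\to V$, where $V_c$ carries its linearly compact topology and $V_d$ the discrete one; this $a$ is a continuous bijection. To see it is open it suffices to check on basic neighbourhoods of $0$: for an open linear subspace $U$ of $V_c$, the set $U\oplus 0$ maps onto $U$, which is open in $V$ because $U$ is open in $V_c$ and $V_c$ is open in $V$; since translations are homeomorphisms and these sets form a basis, $a$ is open. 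Hence $a$ is a homeomorphism and $V\cong_{top}V_c\oplus V_d$, with $V_c\in\BV$ by construction.

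The main obstacle---really the only delicate point---is ensuring the algebraic splitting is topological. One might be tempted to invoke Proposition~\ref{prop:omt} to turn the continuous bijection $a$ into a topological isomorphism, but $V_c\oplus V_d$ is not linearly compact when $V_d$ is infinite-dimensional, so that shortcut is unavailable. The clean resolution is to exploit the openness of $V_c$ directly, which simultaneously forces $V_d$ to be discrete and makes $a$ open; no completeness or finite-intersection arguments are needed.
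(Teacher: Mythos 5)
Your proof is correct. Note, however, that the paper does not prove this statement at all: it is quoted as background directly from Lefschetz \cite[(27.10), page 79]{Lef}, so there is no internal argument to compare yours against. Your argument is the natural self-contained one, and both halves are sound. For the backward implication, discreteness of $V_d$ makes $V_c\oplus 0$ open, and the open linear subspaces of $V_c$ (each closed, hence linearly compact by Proposition~\ref{prop:lc properties}(b)) give the required neighbourhood basis of linearly compact open subspaces. For the forward implication, the algebraic splitting is automatic because every $\K$-vector space is free (a basis of the discrete quotient $V/V_c$ lifts), and the two topological points --- discreteness of $V_d$ and openness of the addition map $a$ --- both flow from the single fact that $V_c$ is open, exactly as you isolate; checking openness of $a$ only on a neighbourhood basis at $0$ and transporting by translations is legitimate because $a$ is linear. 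Your closing remark is also well taken: Proposition~\ref{prop:omt} is unavailable here since $V_c\oplus V_d$ need not be linearly compact when $V_d$ is infinite-dimensional, and your direct openness argument is the correct substitute. One small point of alignment with the statement: the clause ``in particular, $V_c\in\BV$'' asserts that in \emph{any} decomposition of the stated form the linearly compact summand is automatically open; this is delivered by your backward-direction observation that $V_c\oplus 0$ is open (and linearly compact by hypothesis), so it is worth flagging that explicitly rather than relying only on your forward-direction choice of $V_c\in\BV$.
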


Thus, every l.l.c.\! vector space is complete. Moreover, the class of all l.l.c.\! vector spaces is closed under taking closed linear subspaces, quotient vector spaces modulo closed linear subspaces and extensions.

\smallskip
We recall that, in view of \cite[Proposition~3]{CGB}, for an l.l.c.\! vector space $V$ and $W$  a closed linear subspace of $V$,
\begin{equation}\label{eq:basis}
\B(W)=\{U\cap W\mid U\in\BV\}\quad\text{and}\quad\B(V/W)=\left\{\frac{U+W}{W}\mid U\in\BV\right\}.
\end{equation}

\subsection{Lefschetz Duality}\label{ss:lefdual}

Let $V$ be an l.l.c.\! vector space and let $\mathrm{CHom(V,\mathbb K)}$ be the vector space of all continuous characters $V\to\K$. 
For a linear subspace $A$ of $V$, the \emph{annihilator} of $A$ in $\mathrm{CHom(V,\mathbb K)}$ is
\begin{equation*}
A^\perp=\{\chi\in\mathrm{CHom(V,\mathbb K)}:\chi(A)=0\}.
\end{equation*}
By \cite[4.(1'), page 86]{Kothe}, the continuous characters in $\mathrm{CHom(V,\mathbb K)}$ separate the points of $V$.

We denote by $\widehat V$ the vector space $\mathrm{CHom(V,K)}$ endowed with the topology having the family
\begin{equation*}
\{A^\perp\mid A\leq V,\ A\ \text{linearly compact}\}
\end{equation*}
as neighborhood basis at $0$. The linearly topologized vector space $\widehat V$ is an l.l.c.\! vector space (see \cite{Lef}). In particular, $\widehat V$ is discrete whenever $V$ is linearly compact since $0=V^\perp$ is open. More generally, $V$ is discrete if and only if $\widehat V$ is linearly compact, and $V$ is linearly compact if and only if $\widehat V$ is discrete. Moreover, if $V$ has finite dimension, then $V$ is discrete and $\widehat V$ is the algebraic dual of $V$, so $\widehat V$ is isomorphic to $V$.

By Lefschetz Duality, $V$ is canonically isomorphic to $\doublehat{V}$; indeed, the canonical map 
\begin{equation}
\omega_V:V\to\doublehat{V}\ \text{such that}\ \omega_V(v)(\chi)=\chi(v)\quad\forall v\in\ V,\forall \chi\in\widehat V,
\end{equation}
 is a topological isomorphism. More precisely, denote by $\LLC$ the category whose objects are all l.l.c.\ vector spaces over $\K$ and whose morphisms are the continuous homomorphisms; let $$\widehat{-}:\LLC\to\LLC$$ be the duality functor, which is defined on the objects by $V\mapsto \widehat V$ and on the morphisms sending $\phi:V\to W$ to $\widehat\phi:\widehat W\to \widehat V$ such that $\phi(\chi)=\chi\circ\phi$ for every $\chi\in\widehat W$. Clearly, the biduality functor $\doublehat{-}\colon\LLC\,\to\, \LLC$ is defined by composing $\widehat{-}$ with itself.

\begin{theorem}[Lefschetz Duality Theorem]\label{thm:ldt}
The biduality functor $\doublehat{-}\colon\LLC\,\to\, \LLC$ and the identity functor $\mathrm{id}\colon\LLC\,\to\,\LLC$ are naturally isomorphic.
\end{theorem}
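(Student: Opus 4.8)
The plan is to package the canonical maps $\omega_V\colon V\to\doublehat{V}$ into a single natural transformation $\omega=(\omega_V)_V$ from the identity functor to the biduality functor $\doublehat{-}\colon\LLC\to\LLC$, and then to invoke the fact recalled just above the statement, that each $\omega_V$ is a topological isomorphism, i.e.\ an isomorphism in $\LLC$. Since a natural transformation all of whose components are isomorphisms is automatically a natural isomorphism (the componentwise inverses assemble into the inverse transformation), the whole content left to prove is that the naturality squares commute. Thus, after this categorical reduction, there are really only two things to address: the routine commutativity of the squares, and the substantive fact that every component $\omega_V$ is an isomorphism.

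First I would fix a continuous homomorphism $\phi\colon V\to W$ of l.l.c.\ vector spaces and verify the identity $\doublehat{\phi}\circ\omega_V=\omega_W\circ\phi$ by a direct diagram chase from the definition of the functor on morphisms. Recall that $\widehat{\phi}\colon\widehat W\to\widehat V$ acts by $\widehat{\phi}(\psi)=\psi\circ\phi$, so that $\doublehat{\phi}\colon\doublehat{V}\to\doublehat{W}$ acts by $\doublehat{\phi}(\xi)=\xi\circ\widehat{\phi}$. Evaluating the left-hand side at $v\in V$ and then at an arbitrary $\psi\in\widehat W$ yields
\[
\bigl(\doublehat{\phi}(\omega_V(v))\bigr)(\psi)=\bigl(\omega_V(v)\circ\widehat{\phi}\bigr)(\psi)=\omega_V(v)(\psi\circ\phi)=(\psi\circ\phi)(v)=\psi(\phi(v)),
\]
whereas the right-hand side gives $\omega_W(\phi(v))(\psi)=\psi(\phi(v))$. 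These agree for every $\psi\in\widehat W$ and every $v\in V$, so the square commutes; as $\phi$ was arbitrary, $\omega$ is a natural transformation $\mathrm{id}\Rightarrow\doublehat{-}$.

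It then remains to justify that each $\omega_V$ is an isomorphism in $\LLC$, which is the heart of the matter. Well-definedness of $\omega_V(v)$ as an element of $\doublehat{V}=\mathrm{CHom}(\widehat V,\K)$ and its continuity follow by taking $A=\K v$, which is finite-dimensional and hence linearly compact by Proposition~\ref{prop:lc properties}(d): then $A^\perp$ is a basic open neighborhood of $0$ in $\widehat V$ contained in $\ker\omega_V(v)$. Injectivity of $\omega_V$ is precisely the separation of points of $V$ by continuous characters, recalled from \cite[4.(1'), page 86]{Kothe}. The remaining surjectivity and openness are exactly the deep content attributed to Lefschetz; if one wished to prove them rather than cite them, I would reduce via the structure Theorem~\ref{thm:dec}, writing $V\cong_{top}V_c\oplus V_d$ and using additivity of $\widehat{-}$ to split into the linearly compact and the discrete cases, which are dual to one another under $\widehat{-}$. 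The linearly compact case is then handled by expressing $V=\varprojlim V/N$ over its finite-dimensional discrete quotients, observing that $\widehat{-}$ converts this inverse limit into the direct limit $\varinjlim\widehat{V/N}$, and reducing to the classical finite-dimensional biduality $F\cong\doublehat{F}$. I expect the main obstacle to lie exactly in this interchange: showing that $\omega_V$ is compatible with the limit, i.e.\ that $\omega_V=\varprojlim\omega_{V/N}$ under these identifications, which rests on the exactness of the inverse limit functor on $\LC$ established in Remark~\ref{rem:ab5} and on $\widehat{-}$ carrying inverse limits to direct limits.
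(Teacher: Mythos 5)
Your proposal is correct and matches the paper's treatment: the paper states this theorem as recalled background, asserting (without proof, via the citation of Lefschetz) precisely that each $\omega_V$ is a topological isomorphism, and your naturality computation $\doublehat{\phi}\circ\omega_V=\omega_W\circ\phi$ supplies the only step that the functorial statement adds beyond that componentwise fact, with your closing sketch of surjectivity/openness being an optional (and reasonable) expansion of what both you and the paper are entitled to cite. Note only that the paper's definition of the duality functor on morphisms contains a typo ($\phi(\chi)=\chi\circ\phi$ should read $\widehat\phi(\chi)=\chi\circ\phi$), which you silently and correctly repaired in your diagram chase.
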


In particular, the duality functor defines a duality between the subcategory $\LC$ of linearly compact vector spaces over $\K$ and the subcategory ${}_\K\Vect$ of discrete vector spaces over $\K$. 

\smallskip
We recall that, for a continuous homomorphism $f\colon V\to W$ of l.l.c.\! vector spaces.
\begin{enumerate}[(a)]
\item If $f$ is injective and it is open onto its image, then $\widehat f$ is surjective. 
\item If $f$ is surjective, then $\widehat f$ is injective.
\end{enumerate}

As a consequence of Lefschetz Duality Theorem, every linearly compact vector space is a product of one-dimensional vector spaces (see \cite[Theorem 32.1]{Lef}). 
Moreover, one can derive the following result (see \cite[Proposition 4 and Corollary 1]{CGB}); note that every compact linearly topologized vector space is linearly compact.

\begin{proposition}\label{prop:tdlc}
Let $\K$ be a discrete finite field and let $V$ be a $\K$-vector space.
\begin{enumerate}[(a)]
\item If $V$ is linearly compact, then $V$ is compact.
\item If $V$ is l.l.c., then $V$ is locally compact.
\end{enumerate}
\end{proposition}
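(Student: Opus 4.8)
The plan is to derive both statements from the structure theory of linearly compact vector spaces, establishing part (a) first and then reducing part (b) to it.

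For part (a), the key input is the consequence of Lefschetz Duality recalled immediately before the statement: every linearly compact $\K$-vector space admits a topological isomorphism $V\cong_{top}\prod_{i\in I}\K_i$, where each $\K_i$ is a one-dimensional $\K$-vector space (see \cite[Theorem 32.1]{Lef}). Since a finite-dimensional linearly topologized vector space is necessarily discrete, each factor $\K_i$ carries the discrete topology; and because $\K$ is finite, $\K_i$ is a finite discrete space, hence compact. By Tychonoff's theorem the product $\prod_{i\in I}\K_i$, endowed with the product topology, is compact, and therefore so is $V$. The one point I would check with care is that the isomorphism $V\cong_{top}\prod_{i\in I}\K_i$ is genuinely a topological isomorphism for the product topology on the right-hand side, so that compactness transfers back to $V$; this is precisely what the cited structure theorem supplies.

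For part (b), I would invoke Theorem~\ref{thm:dec} to write $V\cong_{top}V_c\oplus V_d$ with $V_c\in\BV$ a linearly compact open linear subspace and $V_d$ discrete. By part (a), $V_c$ is compact, so it is a compact open neighborhood of $0$ in $V$. Since the translations $v\mapsto v+v_0$ are homeomorphisms of $V$, each coset $v_0+V_c$ is a compact open neighborhood of $v_0$; thus every point of $V$ has a compact neighborhood and $V$ is locally compact (the space being Hausdorff by the standing assumption on linearly topologized vector spaces).

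I expect no serious obstacle: the substance of the argument is concentrated in part (a), and there it is carried entirely by the product decomposition of linearly compact spaces together with the finiteness of $\K$, which is exactly what upgrades the one-dimensional factors from merely discrete to compact. The only delicate point is to confirm that ``product of one-dimensional vector spaces'' is read with the product topology, so that Tychonoff applies; granting this, part (b) becomes a routine translation argument built on the decomposition of Theorem~\ref{thm:dec}.
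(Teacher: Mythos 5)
Your proposal is correct and follows essentially the same route the paper intends: the paper gives no separate proof but derives the result (citing \cite[Proposition 4 and Corollary 1]{CGB}) precisely from the two facts you use, namely that every linearly compact space is a topological product of one-dimensional discrete (hence, over a finite field, finite and compact) spaces, so Tychonoff gives part (a), and the decomposition $V\cong_{top}V_c\oplus V_d$ of Theorem~\ref{thm:dec} together with translation gives part (b).
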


The above proposition implies that, for an l.l.c.\! vector space $V$ over a discrete finite field, $\BV$ is a neighborhood basis at $0$ in $V$ consisting of compact open subgroups; so, by van Dantzig's Theorem (see \cite{vD}) we obtain the following result.

\begin{corollary}\label{cor:tdlc}
An l.l.c.\! vector space over a discrete finite field is a totally disconnected locally compact abelian group.
\end{corollary}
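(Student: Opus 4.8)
The plan is to assemble the statement from the two parts of Proposition~\ref{prop:tdlc} together with the defining property of l.l.c.\ vector spaces. First I would record that the additive structure of $V$ makes it a Hausdorff abelian topological group: every linearly topologized vector space is by definition Hausdorff, and addition and negation are continuous, so $(V,+)$ is a Hausdorff topological abelian group. By Proposition~\ref{prop:tdlc}(b), since $\K$ is a discrete finite field, $V$ is locally compact. It then remains only to verify total disconnectedness.

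For this, the key observation is that $\BV$ is a neighborhood basis at $0$ in $V$ consisting of open linear subspaces, which is exactly what ``locally linearly compact'' provides. Each $U\in\BV$ is a linear subspace, hence in particular an additive subgroup of $V$, and it is open; moreover $U$ is linearly compact, so by Proposition~\ref{prop:tdlc}(a) it is compact. Thus $\BV$ is a neighborhood basis at $0$ consisting of compact open subgroups of the topological group $(V,+)$.

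Finally I would deduce total disconnectedness. Let $C$ be the connected component of $0$ in $V$. Each $U\in\BV$ is an open subgroup, hence also closed, so $U\cap C$ is a nonempty clopen subset of the connected space $C$, forcing $C\subseteq U$. Therefore $C\subseteq\bigcap_{U\in\BV}U$, and since $V$ is Hausdorff and $\BV$ is a neighborhood basis at $0$, this intersection is $\{0\}$; hence $C=\{0\}$ and $V$ is totally disconnected. Combined with local compactness, $V$ is a totally disconnected locally compact abelian group. Equivalently, one may simply invoke van Dantzig's characterization of totally disconnected locally compact groups as precisely those locally compact groups admitting a neighborhood basis at the identity of compact open subgroups.

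The argument is essentially a bookkeeping assembly of facts already established in the excerpt; the only point requiring a moment's care is the passage from ``open linear subspace'' to ``compact open subgroup'', which rests on Proposition~\ref{prop:tdlc}(a) and on the elementary fact that a linear subspace is an additive subgroup. I expect no genuine obstacle beyond keeping straight the distinction between the vector-space topology and the underlying group topology.
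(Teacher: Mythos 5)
Your proof is correct and follows essentially the same route as the paper: Proposition~\ref{prop:tdlc} upgrades $\BV$ to a neighborhood basis at $0$ of compact open subgroups, from which the conclusion is immediate. The only difference is cosmetic --- where the paper simply cites van Dantzig's theorem, you also spell out the elementary connected-component argument (the easy direction of that characterization), which makes the step self-contained.
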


Given an l.l.c.\! vector space $V$, let $B$ be a linear subspace of $\widehat{V}$. The \emph{annihilator} of $B$ in $V$ is
\begin{equation*}
B^\top=\{x\in V:\chi(x)=0\ \text{for every }\chi\in B\}.
\end{equation*}
For every linear subspace $B$ of $\widehat V$, we have $\omega_V(B^\top)=B^\perp$.

\smallskip
We recall now some known properties of the annihilators (see \cite{Lef,Kothe}) that we use further on.

\begin{lemma}\label{perptop}
Let $V$ be an l.l.c.\! vector space and $A$ a linear subspace of $V$. Then:
\begin{enumerate}[(a)]
\item if $B$ is another linear subspace of $V$ and $A\leq B$, then $B^\perp\leq A^\perp$;
\item $A^\perp=\overline{A}^\perp$;
\item $A^\perp$ is a closed linear subspace of $\widehat{V}$;
\item if $A$ is a closed, $(A^\perp)^\top=A$.
\end{enumerate}
\end{lemma}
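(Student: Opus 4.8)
The plan is to dispatch parts (a), (b) and (c) by direct arguments from the definition of the annihilator together with the discreteness of $\K$ and Lefschetz Duality, and to concentrate the real work on part (d), where the essential ingredient is the separation of points by continuous characters.

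For (a), the inclusion is immediate: if $\chi\in B^\perp$, i.e.\ $\chi(B)=0$, then in particular $\chi(A)=0$ since $A\leq B$, so $\chi\in A^\perp$. For (b), the inclusion $\overline A^\perp\leq A^\perp$ follows from (a) applied to $A\leq\overline A$. Conversely, if $\chi\in A^\perp$ then $A\subseteq\ker\chi$; since $\chi$ is continuous and $\{0\}$ is closed in the discrete field $\K$, the subspace $\ker\chi$ is closed in $V$, whence $\overline A\subseteq\ker\chi$ and $\chi\in\overline A^\perp$. For (c), that $A^\perp$ is a linear subspace is clear from the $\K$-linearity of the characters. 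To see it is closed, I would use Lefschetz Duality (Theorem~\ref{thm:ldt}): for each $a\in A$ the evaluation $\omega_V(a)\colon\widehat V\to\K$, $\chi\mapsto\chi(a)$, is a continuous character of $\widehat V$, so $\ker\omega_V(a)=\{\chi\in\widehat V:\chi(a)=0\}$ is closed in $\widehat V$; then
$$A^\perp=\bigcap_{a\in A}\ker\omega_V(a)$$
is closed as an intersection of closed subspaces.

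The substance is part (d). The inclusion $A\leq(A^\perp)^\top$ holds for any $A$: if $x\in A$ then $\chi(x)=0$ for every $\chi\in A^\perp$ by definition, so $x\in(A^\perp)^\top$. For the reverse inclusion I would argue contrapositively, exploiting that $A$ is closed. Let $x\in V\setminus A$. Since $A$ is a closed linear subspace, the quotient $V/A$ is again an l.l.c.\! (in particular Hausdorff, linearly topologized) vector space, and the image $x+A$ is a nonzero vector of $V/A$. By the separation property of continuous characters (the cited result of K\"othe, applied to $V/A$), there is a continuous character $\psi\colon V/A\to\K$ with $\psi(x+A)\neq 0$. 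Composing with the quotient map $\pi\colon V\to V/A$ yields a continuous character $\chi=\psi\circ\pi\in\widehat V$ that vanishes on $A$, so $\chi\in A^\perp$, while $\chi(x)=\psi(x+A)\neq 0$. Hence $x\notin(A^\perp)^\top$, which proves $(A^\perp)^\top\leq A$ and therefore the equality.

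I expect the only non-formal step, and hence the main obstacle, to be the reverse inclusion in (d): producing, for each point outside the closed subspace $A$, a continuous character separating it from $A$. This is precisely where closedness of $A$ is used, since it guarantees that $V/A$ is Hausdorff and thus that K\"othe's separation theorem applies, and it is the hinge of the whole duality. The remaining assertions are formal consequences of the definitions, the discreteness of $\K$, and biduality.
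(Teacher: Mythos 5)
Your proof is correct. There is, however, no in-paper argument to compare it against: the paper states Lemma~\ref{perptop} as a list of known properties of annihilators, citing Lefschetz and K\"othe, and gives no proof. Your write-up supplies a self-contained derivation using only facts the paper makes available: for (b), discreteness of $\K$, so that kernels of continuous characters are closed; for (c), the fact that each evaluation $\omega_V(a)$ is a continuous character of $\widehat V$, which is part of the Lefschetz duality setup in \S~\ref{ss:lefdual}; and for the essential part (d), K\"othe's separation of points by continuous characters applied to $V/A$ --- a legitimate application, since the class of l.l.c.\ spaces is closed under quotients modulo closed subspaces, so $V/A$ is again l.l.c.\ (in particular Hausdorff) and the separation property quoted in \S~\ref{ss:lefdual} applies to it; this is exactly where closedness of $A$ enters. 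One small simplification: in (c) you can avoid duality altogether --- if $\chi(a)\neq 0$ for some $a\in A$, then $\K a$ is finite-dimensional, hence linearly compact, so $\chi+(\K a)^\perp$ is a basic neighborhood of $\chi$ in $\widehat V$, and every $\psi\in\chi+(\K a)^\perp$ satisfies $\psi(a)=\chi(a)\neq 0$; thus the complement of $A^\perp$ is open. But your route through the evaluation characters is equally valid.
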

%

%

\begin{lemma}\label{lem:intsum}\label{rem:lc ann}
Let $V$ be an l.l.c.\! vector space and $A_1,\ldots,A_n$ linear subspaces of $V$. Then:
\begin{enumerate}[(a)]
\item $\left(\sum_{i=1}^n A_i\right)^\perp=\bigcap_{i=1}^nA_i^\perp$;
\item $\overline{\sum_{i=1}^nA_i^\perp}\subseteq\left(\bigcap_{i=1}^nA_i\right)^\perp;$
\item if $A_1,\ldots,A_n$ are closed, $\left(\bigcap_{i=1}^nA_i\right)^\perp=\overline{\sum_{i=1}^n A_i^\perp}$;
\item if $A_1,\ldots,A_n$ are linearly compact, $\left(\bigcap_{i=1}^n A_i\right)^\perp=\sum_{i=1}^n A_i^\perp$.
\end{enumerate}
\end{lemma}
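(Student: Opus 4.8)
The plan is to treat the four items in increasing order of difficulty, using only elementary properties of the annihilator together with the duality between closed subspaces recorded in Lemma~\ref{perptop}.

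Item (a) is purely algebraic: a character $\chi$ vanishes on $\sum_{i=1}^n A_i$ if and only if it vanishes on each generating subspace, i.e.\ on each $A_i$, and by linearity this is equivalent to $\chi\in\bigcap_{i=1}^n A_i^\perp$; no topology is needed. For item (b) I would first observe that $\bigcap_{i=1}^n A_i\leq A_j$ for every $j$, so monotonicity (Lemma~\ref{perptop}(a)) gives $A_j^\perp\leq\left(\bigcap_{i=1}^n A_i\right)^\perp$ and hence $\sum_{i=1}^n A_i^\perp\leq\left(\bigcap_{i=1}^n A_i\right)^\perp$. Since the right-hand side is closed by Lemma~\ref{perptop}(c), passing to closures yields $\overline{\sum_{i=1}^n A_i^\perp}\subseteq\left(\bigcap_{i=1}^n A_i\right)^\perp$.

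The core of the lemma is item (c), where the nontrivial inclusion is $\left(\bigcap_{i=1}^n A_i\right)^\perp\subseteq\overline{\sum_{i=1}^n A_i^\perp}$. Here I would exploit Lefschetz Duality (Theorem~\ref{thm:ldt}) together with the identity $\omega_V(B^\top)=B^\perp$: combined, they make $A\mapsto A^\perp$ and $B\mapsto B^\top$ into mutually inverse, order-reversing bijections between the closed subspaces of $V$ and those of $\widehat V$, so that $(\cdot)^\top$ satisfies the exact analogues of the properties in Lemma~\ref{perptop}, in particular $(\sum_j B_j)^\top=\bigcap_j B_j^\top$, $\overline{B}^{\top}=B^\top$, and $(B^\top)^\perp=B$ for closed $B\leq\widehat V$. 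Both $\left(\bigcap_{i=1}^n A_i\right)^\perp$ and $\overline{\sum_{i=1}^n A_i^\perp}$ are closed subspaces of $\widehat V$, so it suffices to check that they have the same $\top$-annihilator and then apply $(C^\top)^\perp=C$. On one side, $\left(\left(\bigcap_{i=1}^n A_i\right)^\perp\right)^\top=\bigcap_{i=1}^n A_i$ by Lemma~\ref{perptop}(d), since a finite intersection of closed subspaces is closed. On the other side, $\left(\overline{\sum_{i=1}^n A_i^\perp}\right)^\top=\left(\sum_{i=1}^n A_i^\perp\right)^\top=\bigcap_{i=1}^n (A_i^\perp)^\top=\bigcap_{i=1}^n A_i$, using in turn the $\top$-analogues of Lemma~\ref{perptop}(b) and of item (a), and finally Lemma~\ref{perptop}(d) (each $A_i$ being closed). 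Equality of the two $\top$-annihilators gives item (c).

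Finally, item (d) follows from (c) once I show the closure is redundant under the stronger hypothesis. Each $A_i$ is linearly compact, hence closed by Proposition~\ref{prop:lc properties}(a), so (c) applies and gives $\left(\bigcap_{i=1}^n A_i\right)^\perp=\overline{\sum_{i=1}^n A_i^\perp}$. Moreover, by the very definition of the topology of $\widehat V$ the annihilator $A_i^\perp$ of a linearly compact subspace is a basic open neighborhood of $0$, hence an open, and therefore closed, subspace; a finite sum of open subspaces is again open, so $\sum_{i=1}^n A_i^\perp$ is already closed and the closure may be dropped. The step I expect to be \emph{delicate} is (c): the transfer of Lemma~\ref{perptop} from $\perp$ to $\top$ must be justified through Lefschetz Duality, and one has to keep careful track of which annihilator lives in $V$ and which in $\widehat V$.
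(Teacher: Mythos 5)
Your proof is correct, but there is nothing in the paper to compare it against: the paper states this lemma without proof, as part of a list of ``known properties of the annihilators'' cited to Lefschetz and K\"othe. What your write-up supplies is a self-contained derivation from the paper's own toolkit, and the reasoning holds up. Items (a) and (b) are exactly as elementary as you claim. For (c), the one step that genuinely needs care is the dual identity $\left(B^\top\right)^\perp=B$ for a \emph{closed} subspace $B\leq\widehat V$, and your justification is the right one: apply Lemma~\ref{perptop}(d) to the l.l.c.\ space $\widehat V$ and transport the statement along the topological isomorphism $\omega_V$ using $\omega_V(B^\top)=B^\perp$; the same transport gives the $\top$-analogues of Lemma~\ref{perptop}(b) and of item (a) (the latter is in any case purely algebraic). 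With these in hand, your computation showing that the two closed subspaces $\left(\bigcap_{i=1}^n A_i\right)^\perp$ and $\overline{\sum_{i=1}^n A_i^\perp}$ of $\widehat V$ have the same $\top$-annihilator, namely $\bigcap_{i=1}^n A_i$ (here you correctly use that a finite intersection of closed subspaces is closed, so Lemma~\ref{perptop}(d) applies), does close the argument. Item (d) is also right, and its key observation is worth emphasizing: for $A_i$ linearly compact, $A_i^\perp$ is by the very definition of the topology of $\widehat V$ a basic neighborhood of $0$, hence an open subspace, and a linear subspace containing an open subspace is itself open and therefore closed; so $\sum_{i=1}^n A_i^\perp$ is already closed and the closure in (c) is redundant. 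This is precisely how the lemma is used later (e.g.\ in the proof of Theorem~\ref{BT-intro}, where $C_n(\phi,U)^\perp=T_n(\widehat\phi,U^\perp)$ requires the sum, not its closure), so making (d) explicit rather than waving at the literature is a genuine improvement in rigor.
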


The following is a well-know result concerning l.l.c.\! vector spaces (see \cite[\S 10.12.(6), \S 12.1.(1)]{Kothe}).

\begin{remark}\label{dualperp}
Let $V$ be an l.l.c.\! vector space and $U$ a closed linear subspace of $V$, then
$$\widehat{V/U}\cong_{top} U^\perp\ \text{and}\ \widehat U\cong_{top} \widehat{V}/U^\perp.$$

The first topological isomorphism is the following.
Let $\pi\colon V\to V/U$ be the canonical projection, consider the continuous injective homomorphism $\widehat\pi\colon\widehat{V/U}\to \widehat V$; noting that $\widehat\pi(\widehat{V/U})=U^\perp$, let
\begin{equation}\label{alpha}
\alpha\colon\widehat{V/U}\to U^\perp,\quad\chi\mapsto\widehat\pi(\chi),
\end{equation}
that turns out to be a topological isomorphism.
%

To find explicitly the second topological isomorphism, let $\iota:U\to V$ be the topological embedding of $U$ in $V$ and consider the continuous surjective homomorphism $\widehat \iota\colon\widehat{V}\to\widehat{U}$; in particular, $\widehat\iota(\chi)=\chi\restriction_U$ for every $\chi\in\widehat{V}$. Since $\ker \widehat\iota=U^\perp$, consider the continuous isomorphism induced by $\widehat\iota$
\begin{equation}\label{beta}
\beta\colon\widehat{V}/U^\perp\to\widehat{U},\quad \chi+U^\perp\mapsto \chi\circ\iota,
\end{equation}
that turns out to be a topological isomorphism.
\end{remark}

A consequence of the above topological isomorphisms is the following relation that we use in the last section.

\begin{lemma}\label{A/B}
Let $V$ be a l.l.c.\! vector space, and let $A,B$ be closed linear subspaces of $V$ such that $B\leq A$. Then $\widehat{A/B}\cong_{top}B^\perp/A^\perp$.
\end{lemma}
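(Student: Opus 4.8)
The plan is to factor the computation through the two canonical isomorphisms of Remark~\ref{dualperp}, applied to two different pairs of spaces. First I would record that $A/B$ is itself an l.l.c.\ vector space: since $A$ is a closed linear subspace of $V$ it is l.l.c., and $B$, being closed in $V$ with $B\leq A$, is closed in $A$; hence $A/B$ is l.l.c.\ by the closure properties recalled after Theorem~\ref{thm:dec}, so $\widehat{A/B}$ is defined. I would also note that $A^\perp\leq B^\perp$ by Lemma~\ref{perptop}(a) and that both are closed in $\widehat V$ by Lemma~\ref{perptop}(c), so that the quotient $B^\perp/A^\perp$ makes sense as an l.l.c.\ space.

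The first step is to apply the first isomorphism of Remark~\ref{dualperp} to the l.l.c.\ space $A$ and its closed subspace $B$: this gives a topological isomorphism $\widehat{A/B}\cong_{top} B^{\perp_A}$, where $B^{\perp_A}$ denotes the annihilator of $B$ computed inside $\widehat A$ (with respect to the duality between $A$ and $\widehat A$). The second step is to apply the second isomorphism of Remark~\ref{dualperp} to the pair $A\leq V$: this yields the topological isomorphism $\beta\colon \widehat V/A^\perp\to\widehat A$, $\chi+A^\perp\mapsto\chi\restriction_A$.

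The crux is to identify, under $\beta$, the subspace $B^\perp/A^\perp$ of $\widehat V/A^\perp$ with the subspace $B^{\perp_A}$ of $\widehat A$. For $\chi\in\widehat V$ one has $\beta(\chi+A^\perp)=\chi\restriction_A\in B^{\perp_A}$ precisely when $\chi(b)=0$ for all $b\in B$, i.e.\ when $\chi\in B^\perp$; hence $\beta(B^\perp/A^\perp)=B^{\perp_A}$. Since $\beta$ is a topological isomorphism, its restriction to the closed subspace $B^\perp/A^\perp$ is a topological isomorphism onto $B^{\perp_A}$ (both carrying the subspace topology), giving $B^\perp/A^\perp\cong_{top}B^{\perp_A}$. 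Composing with the isomorphism of the first step yields $\widehat{A/B}\cong_{top}B^\perp/A^\perp$.

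I expect the only genuinely delicate point to be the bookkeeping in the crux step: one is simultaneously juggling the annihilator $\perp$ taken in $\widehat V$ and the annihilator $\perp_A$ taken in $\widehat A$, and one must check that $\beta$ restricts to a \emph{topological} (not merely algebraic) isomorphism on $B^\perp/A^\perp$. The latter is automatic once $\beta$ is a topological isomorphism and $B^\perp$ is closed (Lemma~\ref{perptop}(c)), so that $B^\perp/A^\perp$ carries the subspace topology of $\widehat V/A^\perp$ and $\beta$ restricts to a homeomorphism onto its image.
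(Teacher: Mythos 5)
Your proof is correct, but it factors the duality through the ``mirror image'' of the paper's decomposition. The paper applies the first isomorphism of Remark~\ref{dualperp} to the pair $(V,B)$, obtaining $\alpha\colon\widehat{V/B}\to B^\perp$, and the second isomorphism to the pair $(V/B,\,A/B)$, obtaining $\beta\colon\widehat{V/B}/(A/B)^\perp\to\widehat{A/B}$; the composite $\varphi=\beta\circ\pi\circ\alpha^{-1}\colon B^\perp\to\widehat{A/B}$ is then an open continuous surjection with $\ker\varphi=A^\perp$, and the claim follows at once from the definition of the quotient topology. You instead apply the first isomorphism to $(A,B)$ and the second to $(V,A)$, passing through $\widehat A$ rather than $\widehat{V/B}$, and realize $\widehat{A/B}$ as the subspace $B^{\perp_A}$ of $\widehat A$, matched with $B^\perp/A^\perp$ under $\widehat V/A^\perp\cong_{top}\widehat A$. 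The trade-off is where the bookkeeping lands: the paper's route makes the quotient-topology statement automatic (an open continuous surjection induces a topological isomorphism from domain modulo kernel) but must compute $\ker\varphi=A^\perp$, which it asserts without detail; your route makes the identification of the two sides transparent via restriction of characters (with extendability of characters of $A$ built into the surjectivity of $\beta$), but needs the extra compatibility that the quotient topology on $B^\perp/A^\perp$ agrees with its subspace topology inside $\widehat V/A^\perp$. That compatibility does hold, but your stated reason --- closedness of $B^\perp$ --- is not the operative one: what makes the restricted projection $B^\perp\to q(B^\perp)$ open, where $q\colon\widehat V\to\widehat V/A^\perp$ is the canonical map, is the containment $A^\perp\leq B^\perp$, since for $W$ open in $\widehat V$ one has $q(W\cap B^\perp)=q(W)\cap q(B^\perp)$ exactly because any $\chi\in B^\perp$ with $q(\chi)\in q(W)$ differs from an element of $W$ by an element of $A^\perp\leq B^\perp$. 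This is a standard fact for linearly topologized spaces (indeed for topological groups), so it is a minor imprecision in justification rather than a gap, but it is worth stating correctly since it is precisely the point your approach has to pay for that the paper's does not.
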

\begin{proof}
Let $\iota\colon A/B\to V/B$ be the topological embedding and $\beta\colon \widehat{V/B}/(A/B)^\perp\to\widehat{A/B}$ the topological isomorphism given by Equation~\eqref{beta}. Set $\pi\colon\widehat{V/B}\to  \widehat{V/B}/(A/B)^\perp$ be the canonical projection. Since $\widehat\iota=\beta\circ \pi$, $\widehat\iota$ is an open continuous surjective homomorphism. Let $\alpha:\widehat{V/B}\to B^\perp$ be the topological isomorphism given by Equation~\eqref{alpha}; then $\varphi=\beta\circ\pi\circ\alpha^{-1}$ is an open continuous surjective homomorphism $\varphi:B^\perp\to \widehat{A/B}$. 
\begin{equation*}
\xymatrix{ B^\perp\ar[r]^{\alpha^-1}\ar@/_2pc/[rrr]_{\varphi}&\widehat{V/B}\ar@/^2pc/[rr]^{\widehat\iota} \ar[r]^{\pi}&\frac{\widehat{V/B}}{(A/B)^\perp}\ar[r]^{\beta}& \widehat{A/B}}
\end{equation*}
As $\ker\varphi=A^\perp$, we conclude that $B^\perp/A^\perp\cong_{top}\widehat{A/B}$.
\end{proof}

\section{Properties and examples}

\subsection{Existence of the limit and basic properties}

Let $V$ be an l.l.c.\! vector space, $\phi:V\to V$ a continuous endomorphism and $U\in\BV$. 
For every $n\in\N_+$, $C_n(\phi,U)\in\BV$ by Proposition~\ref{prop:lc properties}(b); hence, $U/C_n(\phi,U)$ has finite dimension by Proposition~\ref{prop:lc properties}(d,e). 

Moreover, $C_n(\phi,U)\geq C_{n+1}(\phi,U)$ for every $n\in\N_+$, so we have the following decreasing chain in $\BV$
$$U=C_1(\phi,U)\geq C_2(\phi,U)\geq\cdots\geq C_{n}(\phi,U)\geq C_{n+1}(\phi,U)\geq\ldots.$$
The largest $\f$-invariant subspace of $U$, namely,
$$C(\phi,U)=\bigcap_{n\in\N_+} C_n(\phi,U),$$
is the \emph{$\phi$-cotrajectory} of $U$ in $V$; it is a linearly compact linear subspace of $V$ by Proposition~\ref{prop:lc properties}(b).

\begin{lemma}\label{cn}
Let $V$ be an l.l.c.\! vector space, $\f:V\to V$ a continuous endomorphism and $U\in\BV$.
For every $n\in\N_+$, $C_{n}(\f,U)/C_{n+1}(\f,U)$ has finite dimension and $C_{n+1}(\f,U)/C_{n+2}(\f,U)$ is isomorphic to a linear subspace of $C_{n}(\f,U)/C_{n+1}(\f,U)$.
\end{lemma}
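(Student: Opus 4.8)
The plan is to handle the two assertions separately: the first is essentially a restatement of facts already recorded, while the second rests on a single structural identity for the cotrajectories together with the map induced by $\f$.

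For the finite-dimensionality I would argue exactly as in the paragraph preceding the statement. Since $C_n(\f,U)\in\BV$ and $C_{n+1}(\f,U)\in\BV$ by Proposition~\ref{prop:lc properties}(b), the subspace $C_{n+1}(\f,U)$ is a closed (Proposition~\ref{prop:lc properties}(a)) and open linear subspace of the linearly compact space $C_n(\f,U)$. Hence $C_n(\f,U)/C_{n+1}(\f,U)$ is linearly compact by Proposition~\ref{prop:lc properties}(e) and discrete because $C_{n+1}(\f,U)$ is open, and therefore finite-dimensional by Proposition~\ref{prop:lc properties}(d).

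The heart of the matter is the recursive identity
$$C_{n+1}(\f,U)=U\cap \f^{-1}\bigl(C_n(\f,U)\bigr)\qquad\text{for all }n\in\N_+,$$
which follows at once from $C_n(\f,U)=U\cap\f^{-1}U\cap\cdots\cap\f^{-n+1}U$: applying $\f^{-1}$ gives $\f^{-1}(C_n(\f,U))=\f^{-1}U\cap\cdots\cap\f^{-n}U$, and intersecting with $U$ returns $C_{n+1}(\f,U)$. Two inclusions are then immediate, namely $\f\bigl(C_{n+1}(\f,U)\bigr)\subseteq C_n(\f,U)$ and $\f\bigl(C_{n+2}(\f,U)\bigr)\subseteq C_{n+1}(\f,U)$. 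The second one shows that $\f$ induces a well-defined linear map
$$\bar\f\colon \frac{C_{n+1}(\f,U)}{C_{n+2}(\f,U)}\longrightarrow \frac{C_n(\f,U)}{C_{n+1}(\f,U)},\qquad x+C_{n+2}(\f,U)\mapsto \f(x)+C_{n+1}(\f,U).$$

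It then remains to prove that $\bar\f$ is injective, which is where the identity does the real work and which I expect to be the only genuinely delicate point. Suppose $x\in C_{n+1}(\f,U)$ with $\f(x)\in C_{n+1}(\f,U)$. Then $x\in U$ (as $C_{n+1}(\f,U)\subseteq U$) and $x\in\f^{-1}\bigl(C_{n+1}(\f,U)\bigr)$, so the identity with $n$ replaced by $n+1$ yields $x\in U\cap\f^{-1}\bigl(C_{n+1}(\f,U)\bigr)=C_{n+2}(\f,U)$. Hence $\ker\bar\f=0$, and $\bar\f$ identifies $C_{n+1}(\f,U)/C_{n+2}(\f,U)$ with a linear subspace of $C_n(\f,U)/C_{n+1}(\f,U)$, as claimed. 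Because both quotients are finite-dimensional, the injective map is automatically an isomorphism onto its image, so no further topological considerations are needed beyond those used for the first assertion.
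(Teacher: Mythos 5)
Your proof is correct, and its second half takes a more direct route than the paper's. The paper obtains the embedding by applying the second isomorphism theorem twice, realizing $C_{n+1}/C_{n+2}\cong (C_{n+1}+\f^{-n-1}U)/\f^{-n-1}U$ and $C_n/C_{n+1}\cong (C_n+\f^{-n}U)/\f^{-n}U$, and then invoking the injective homomorphism $\tilde\f\colon V/\f^{-n-1}U\to V/\f^{-n}U$ induced by $\f$ on quotients of the ambient space. You instead work directly with the natural map $\bar\f\colon C_{n+1}/C_{n+2}\to C_n/C_{n+1}$ induced by $\f$ itself and verify $\ker\bar\f=0$ from the recursion $C_{k+1}(\f,U)=U\cap\f^{-1}\bigl(C_k(\f,U)\bigr)$. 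The underlying mechanism is the same in both arguments (injectivity comes from the fact that $\f(x)$ lying in a cotrajectory, together with $x\in U$, forces $x$ into the next cotrajectory), but your version avoids the auxiliary subquotients of $V$ and makes the embedding explicit, which is arguably cleaner and equally rigorous: well-definedness and injectivity reduce precisely to the inclusions $\f(C_{n+2})\subseteq C_{n+1}$ and $U\cap\f^{-1}(C_{n+1})= C_{n+2}$, both immediate from the recursion. One small remark: the closing appeal to finite-dimensionality is unnecessary, since an injective linear map is an isomorphism onto its image in any case, and because the denominators $C_{n+1}$ and $C_{n+2}$ are open, all quotients involved are discrete, so no topological refinement of ``isomorphic'' is at stake.
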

\begin{proof}
To simplify the notation, let $C_n=C_n(\phi,U)$ for every $n\in\N_+$. 

Fix $n\in\N_+$. Since $C_{n+1}\leq C_n$, and $C_{n+1}$ is open while $C_n$ is linearly compact, $C_n/C_{n+1}$ has finite dimension by Proposition~\ref{prop:lc properties}(d,e). 

Since $C_{n+2}=C_{n+1}\cap\f^{-n-1}U$ and $C_{n+1}=U\cap\f^{-1}C_{n}$, it follows that
\begin{equation*}\label{eq:c1}
\frac{C_{n+1}}{C_{n+2}}\cong \frac{C_{n+1}+\f^{-n-1}U}{\f^{-n-1}U}\leq\frac{\f^{-1}C_{n}+\f^{-n-1}U}{\f^{-n-1}U}.
\end{equation*}
On the other hand, since $C_{n+1}=C_{n}\cap\f^{-n}U$,
\begin{equation*}\label{eq:c2}
\frac{C_{n}}{C_{n+1}}\cong\frac{C_{n}+\f^{-n}U}{\f^{-n}U}.
\end{equation*}
Let $\tilde\f\colon V/\f^{-n-1}U\to V/\f^{-n}U$ be the injective homomorphism induced by $\f$. 
Then $$\tilde\phi\left(\frac{\f^{-1}C_{n}+\f^{-n-1}U}{\f^{-n-1}U}\right)\leq \frac{C_{n}+\f^{-n}U}{\f^{-n}U}$$ and so the thesis follows.
\end{proof}

The following result shows that the limit in the definition of the topological entropy $H^*(\phi,U)$ (see Equation~\eqref{eq:H*}) exists and it is a natural number.

\begin{proposition}\label{prop:exist}
Let $V$ be an l.l.c.\! vector space, $\f:V\to V$ a continuous endomorphism and $U\in\BV$. For every $n\in\N_+$, let
$$\gamma_n=\dim \frac{C_n(\f,U)}{C_{n+1}(\f,U)}.$$
Then the sequence of non-negative integers $\{\gamma_n\}_{n\in\N}$ is decreasing, hence stationary. Moreover,
$H^*(\f,U)=\gamma$, where $\gamma$ is the value of the stationary sequence $\{\gamma_n\}_{n\in\N}$ for $n\in\N_+$ large enough.
\end{proposition}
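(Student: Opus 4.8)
The plan is to read off the monotonicity and stationarity of $\{\gamma_n\}$ directly from Lemma~\ref{cn}, and then to recognize the limit defining $H^*(\phi,U)$ as a Ces\`aro mean of the sequence $\{\gamma_n\}$. Writing $C_n=C_n(\phi,U)$ for brevity, Lemma~\ref{cn} tells me that each $C_n/C_{n+1}$ is finite-dimensional---so every $\gamma_n$ is a well-defined non-negative integer---and that $C_{n+1}/C_{n+2}$ embeds as a linear subspace of $C_n/C_{n+1}$. Taking dimensions in this embedding yields $\gamma_{n+1}\le\gamma_n$, so $\{\gamma_n\}$ is a non-increasing sequence in $\N$ and is therefore eventually constant; let $\gamma$ be its eventual value, attained from some index $N$ on.

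Next I would express $\dim(U/C_n)$ as a telescoping sum along the chain $U=C_1\ge C_2\ge\cdots\ge C_n$. Each successive quotient is finite-dimensional, and $U/C_n$ is itself finite-dimensional by Proposition~\ref{prop:lc properties}(d,e) (as already noted before Lemma~\ref{cn}); hence additivity of dimension over the short exact sequences $0\to C_{k+1}/C_n\to C_k/C_n\to C_k/C_{k+1}\to 0$ gives
$$\dim\frac{U}{C_n}=\dim\frac{C_1}{C_n}=\sum_{k=1}^{n-1}\dim\frac{C_k}{C_{k+1}}=\sum_{k=1}^{n-1}\gamma_k.$$

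Substituting into the definition, $H^*(\phi,U)=\lim_{n\to\infty}\frac1n\sum_{k=1}^{n-1}\gamma_k$ is precisely the Ces\`aro mean of $\{\gamma_n\}$. Splitting the sum at $N$ as $\sum_{k=1}^{N-1}\gamma_k+(n-N)\gamma$ for $n>N$ and dividing by $n$, the fixed first term is killed in the limit while the second tends to $\gamma$, so $H^*(\phi,U)=\gamma$, as claimed. I expect no real obstacle here: Lemma~\ref{cn} already supplies both the finite-dimensionality making each $\gamma_n$ meaningful and the embedding forcing monotonicity, and the remaining ingredients---additivity of dimension along a finite chain and convergence of the Ces\`aro means of a convergent sequence---are entirely elementary.
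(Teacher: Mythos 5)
Your proposal is correct and follows essentially the same route as the paper: both deduce $\gamma_{n+1}\le\gamma_n$ from Lemma~\ref{cn}, then telescope $\dim(U/C_n)$ along the chain of cotrajectories and split the sum at the index where the sequence stabilizes (your Ces\`aro-mean phrasing is just the paper's identity $\dim(U/C_{n_0+n})=\dim(U/C_{n_0})+n\gamma$ written as a sum). No gaps; the argument is complete as stated.
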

\begin{proof} 
To simplify the notation, let $C_n=C_n(\phi,U)$ for every $n\in\N_+$. By Lemma~\ref{cn}, $\gamma_{n+1}\leq\gamma_{n}$ for every $n\in\N_+$.
Hence, there exist $\gamma\in\N$ and $n_0\in\N$ such that $\gamma_n=\gamma$ for all $n\geq n_0$. Since
\begin{equation*}
\frac{U}{C_n}\cong\frac{U/C_{n+1}}{C_n/C_{n+1}},
\end{equation*}
it follows that
\begin{equation*}
\dim \frac{U}{C_{n+1}} =\dim \frac{U}{C_{n}} +\gamma_n,
\end{equation*}
and so, for every $n\in\N$,
\begin{equation*}
\dim \frac{U}{C_{n_0+n}} =\dim \frac{U}{C_{n_0}} +n\gamma.
\end{equation*}
Hence, $H^*(\f,U)=\lim_{n\to \infty}\frac{1}{n}\left(\dim \frac{U}{C_{n_0}} +n\gamma\right)=\gamma$.
\end{proof}

We see now that $H^*(\f,-)$ is monotone decreasing in the following sense.

\begin{lemma}\label{lem:mono*}
Let $V$ be an l.l.c.\! vector space, $\f\colon V\to V$ a continuous endomorphism and $U,U'\in\BV$. If $U'\leq U$, then $H^*(\f,U)\leq H^*(\f,U')$.
\end{lemma}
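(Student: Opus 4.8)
The plan is to compare the finite-dimensional quotients $U/C_n(\f,U)$ and $U'/C_n(\f,U')$ directly for each $n$, exploiting that $U/U'$ is finite-dimensional, and then to divide by $n$ and let $n\to\infty$ so that the constant contribution of $U/U'$ washes out.

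First I would record the relevant containments. Since $U'\leq U$, applying $\f^{-i}$ gives $\f^{-i}U'\leq\f^{-i}U$ for every $i$, whence $C_n(\f,U')\leq C_n(\f,U)$ for all $n\in\N_+$; moreover $C_n(\f,U')\leq U'\leq U$. Both $C_n(\f,U)$ and $C_n(\f,U')$ belong to $\BV$, since a finite intersection of linearly compact open linear subspaces is again linearly compact (Proposition~\ref{prop:lc properties}(b)) and open. Consequently every quotient of $U$ by one of these subspaces is finite-dimensional by Proposition~\ref{prop:lc properties}(d,e); in particular the codimension $d:=\dim U/U'$ is finite, which is the single place where the hypothesis $U,U'\in\BV$ is genuinely used.

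Next I would compute $\dim(U/C_n(\f,U'))$ in two ways, using the additivity of dimension along the two chains $C_n(\f,U')\leq C_n(\f,U)\leq U$ and $C_n(\f,U')\leq U'\leq U$, to obtain
$$\dim\frac{U}{C_n(\f,U)}+\dim\frac{C_n(\f,U)}{C_n(\f,U')}=\dim\frac{U}{U'}+\dim\frac{U'}{C_n(\f,U')}.$$
Discarding the nonnegative term $\dim\bigl(C_n(\f,U)/C_n(\f,U')\bigr)$ on the left yields the key inequality
$$\dim\frac{U}{C_n(\f,U)}\leq d+\dim\frac{U'}{C_n(\f,U')}.$$

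Finally, dividing by $n$ and letting $n\to\infty$ makes the term $d/n$ vanish, so $H^*(\f,U)\leq H^*(\f,U')$, both limits existing by Proposition~\ref{prop:exist}. The argument is essentially routine: the only substantive point is securing the finiteness of all dimensions involved, after which the conclusion is just additivity of dimension together with the fact that a fixed finite codimension is negligible in this Ces\`aro-type limit.
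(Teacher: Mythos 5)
Your proof is correct and follows essentially the same route as the paper: both arguments reduce to the per-$n$ inequality $\dim \frac{U}{C_n(\f,U)} \leq \dim \frac{U'}{C_n(\f,U')} + \dim\frac{U}{U'}$ and then let the fixed constant $\dim\frac{U}{U'}$ wash out after dividing by $n$. The only difference is bookkeeping: the paper derives this inequality via the isomorphism $\frac{U'}{U'\cap C_n(\f,U)}\cong\frac{C_n(\f,U)+U'}{C_n(\f,U)}$, whereas you compute $\dim\frac{U}{C_n(\f,U')}$ along the two chains $C_n(\f,U')\leq C_n(\f,U)\leq U$ and $C_n(\f,U')\leq U'\leq U$ --- an equally valid and marginally more direct count.
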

\begin{proof} 
Let $U,U'\in\BV$ such that $U'\leq U$. As $C_n(\f,U')\leq C_n(\f,U)$ for all $n\in\N_+$, from
\begin{equation*}
\frac{U'/C_n(\f,U')}{(U'\cap C_n(\f,U))/C_n(\f,U')}\cong \frac{U'}{U'\cap C_n(\f,U)}\cong\frac{C_n(\f,U)+U'}{C_n(\f,U)},
\end{equation*}
since all terms are finite-dimensional, it follows that 
\begin{equation*}
\dim \frac{U'}{C_n(\f,U')}\geq\dim \frac{C_n(\f,U)+U'}{C_n(\f,U)}
\end{equation*}
Since $C_n(\f,U)\leq C_n(\f,U)+U'\leq U$, 
\begin{eqnarray*}
\dim \frac{C_n(\f,U)+U'}{C_n(\f,U)} &=& \dim \frac{U}{C_n(\f,U)} -\dim\frac{U}{C_n(\f,U)+U'}\\
&\geq& \dim \frac{U}{C_n(\f,U)} - \dim\frac{U}{U'}.
\end{eqnarray*}
Therefore, $\dim \frac{U'}{C_n(\f,U')}\geq\dim \frac{U}{C_n(\f,U)} - \dim\frac{U}{U'}$,
and so $H^*(\f,U')\geq H^*(\f,U)$, since $\dim\frac{U}{U'}$ does not depend on $n$.
\end{proof}

As a straightforward consequence of Lemma~\ref{lem:mono*} it is possible to compute the topological entropy by restricting to any neighborhood basis at $0$ in $V$ contained in $\BV$:

\begin{corollary}\label{cor:coinitial}
Let $V$ be an l.l.c.\! vector space and $\phi:V\to V$ a continuous endomorphism. If $\B\subseteq\B(V)$ is a neighborhood basis at $0$ in $V$, then
$$\ent^*(\f)=\{H^*(\f,U)\mid U\in\B\}.$$
 Consequently, if $W$ is an open linear subspace of $V$, then $\ent^*(\f)=\{H^*(\f,U)\mid U\in\B(W)\}.$
\end{corollary}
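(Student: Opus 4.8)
The plan is to derive the Corollary directly from the antitone monotonicity of $H^*(\f,-)$ established in Lemma~\ref{lem:mono*}, together with the fact that a neighborhood basis is coinitial in $\BV$ with respect to inclusion. (We read the displayed equality as $\ent^*(\f)=\sup\{H^*(\f,U)\mid U\in\B\}$, the supremum that defines the entropy.)

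First I would prove the two inequalities separately. Since $\B\subseteq\BV$ by hypothesis, the supremum over $\B$ cannot exceed the supremum over all of $\BV$, so $\sup\{H^*(\f,U)\mid U\in\B\}\le\ent^*(\f)$ is immediate from the definition of $\ent^*$. For the reverse inequality I would fix an arbitrary $U\in\BV$. As $U$ is in particular an open neighborhood of $0$ and $\B$ is a neighborhood basis at $0$, there exists $U'\in\B$ with $U'\le U$. Lemma~\ref{lem:mono*} then yields $H^*(\f,U)\le H^*(\f,U')\le\sup\{H^*(\f,U'')\mid U''\in\B\}$. Since $U\in\BV$ was arbitrary, taking the supremum over $\BV$ gives $\ent^*(\f)\le\sup\{H^*(\f,U)\mid U\in\B\}$, and the two inequalities combine to the asserted equality.

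For the final assertion I would apply the first part with $\B=\B(W)$, so the task reduces to checking that $\B(W)$ satisfies the two hypotheses: that it is contained in $\BV$ and that it is a neighborhood basis at $0$ in $V$. Every member of $\B(W)$ is a linearly compact open linear subspace of $W$; linear compactness is intrinsic to the subspace topology, and since $W$ is open in $V$ an open subspace of $W$ is open in $V$ as well, so each such member lies in $\BV$, giving $\B(W)\subseteq\BV$. To see that $\B(W)$ is a neighborhood basis at $0$ in $V$, note that $W$ itself is an open neighborhood of $0$; hence for any neighborhood $N$ of $0$ in $V$ the set $N\cap W$ is a neighborhood of $0$ in $W$, so some member of $\B(W)$ is contained in $N\cap W\subseteq N$. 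Invoking the first part with this choice of $\B$ then yields $\ent^*(\f)=\sup\{H^*(\f,U)\mid U\in\B(W)\}$.

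There is no substantial obstacle here: the argument is a routine combination of Lemma~\ref{lem:mono*} with the coinitiality of any neighborhood basis inside $\BV$. The only point deserving care is the verification, in the final assertion, that $\B(W)$ is a neighborhood basis at $0$ in the ambient space $V$ and not merely in $W$; this is precisely where the openness of $W$ is used, and it is what legitimizes the reduction to the first part.
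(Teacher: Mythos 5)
Your proof is correct and matches the paper's intended argument: the paper presents this corollary as a direct consequence of Lemma~\ref{lem:mono*}, using exactly the coinitiality of a neighborhood basis $\B\subseteq\BV$ that you spell out, and your verification that $\B(W)$ qualifies when $W$ is open is the same routine check the paper leaves implicit. (You were also right to read the displayed equality as a supremum; the paper's statement simply omits the $\sup$ by a typographical slip.)
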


We consider now the case of topological entropy zero, starting from a basic example.

\begin{example}\label{idex}
Let $V$ be an l.l.c.\! vector space.
\begin{enumerate}[(a)]
\item If $\phi:V\to V$ is a continuous endomorphism such that $\phi^{-1}U\leq U$ for every $U\in\BV$, then $\ent^*(\phi)=0$.
\item Item (a) implies immediately that $\ent^*(id_V)=0$, where $id_V:V\to V$ is the identity automorphism.
\end{enumerate}
\end{example}

The following result concerns the general case of an endomorphism of zero topological entropy.

\begin{proposition}\label{prop:ent*zero}
Let $V$ be an l.l.c.\! vector space, $\phi:V\to V$ a continuous endomorphism and $U\in\BV$. The following conditions are equivalent:
\begin{itemize}
\item[(a)] $H^*(\phi,U)=0$;
\item[(b)] there exists $n\in\N$ such that $C(\phi,U)=C_n(\phi,U)$;
\item[(c)] $C(\phi,U)$ is open.
\end{itemize}
In particular, $\ent^*(\f)=0$ if and only if $C(\f,U)$ is open for all $U\in\BV$.
\end{proposition}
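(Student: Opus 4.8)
The plan is to establish the cyclic chain of implications $(a)\Rightarrow(b)\Rightarrow(c)\Rightarrow(a)$, and then to read off the final ``in particular'' assertion from this equivalence together with the fact that $\ent^*(\f)$ is a supremum of non-negative quantities. Throughout I abbreviate $C_n=C_n(\f,U)$ and $C=C(\f,U)$, and I use freely from Proposition~\ref{prop:exist} that the non-negative integers $\gamma_n=\dim(C_n/C_{n+1})$ form a decreasing, hence eventually stationary, sequence whose limiting value equals $H^*(\f,U)$.

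For $(a)\Rightarrow(b)$ I would argue that if $H^*(\f,U)=0$ then the stationary value of $\{\gamma_n\}$ is $0$, so there is $n_0$ with $\gamma_n=0$, i.e.\ $C_n=C_{n+1}$, for all $n\geq n_0$; since the decreasing chain $\{C_n\}$ is constant from $n_0$ on, its intersection is $C=C_{n_0}$, which is (b). For $(b)\Rightarrow(c)$ I observe that each $C_n$ is a finite intersection of the subspaces $\f^{-k}U$, which are open because $U$ is open and $\f$ is continuous; hence every $C_n$ is open (indeed $C_n\in\BV$), so if $C=C_n$ for some $n$, then $C$ is open.

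The substantive direction is $(c)\Rightarrow(a)$. Assuming $C$ is open, I would first note that $C$ is closed (it is linearly compact, hence closed by Proposition~\ref{prop:lc properties}(a)), so $U/C$ is linearly compact by Proposition~\ref{prop:lc properties}(e) and discrete because $C$ is open; therefore $U/C$ is finite-dimensional by Proposition~\ref{prop:lc properties}(d). Now $\{C_n/C\}_{n\in\N_+}$ is a decreasing chain of subspaces of the finite-dimensional space $U/C$ satisfying $\bigcap_n (C_n/C)=(\bigcap_n C_n)/C=C/C=0$. A strictly decreasing chain of subspaces of a finite-dimensional space has finite length, so the chain stabilises; as its intersection is trivial, there is $n$ with $C_n/C=0$, i.e.\ $C_n=C$, whence $C_m=C$ and $\gamma_m=0$ for all $m\geq n$, so $H^*(\f,U)=0$. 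This closes the cycle and is the only step requiring a genuine idea, namely passing to the finite-dimensional quotient $U/C$ before invoking finiteness of chains; the remaining implications are immediate from Proposition~\ref{prop:exist} and the basic properties of linearly compact spaces.

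Finally, since $H^*(\f,U)\geq 0$ for every $U\in\BV$ and $\ent^*(\f)=\sup\{H^*(\f,U)\mid U\in\BV\}$, one has $\ent^*(\f)=0$ precisely when $H^*(\f,U)=0$ for all $U\in\BV$; by the equivalence $(a)\Leftrightarrow(c)$ just proved, this holds exactly when $C(\f,U)$ is open for every $U\in\BV$, which is the concluding claim.
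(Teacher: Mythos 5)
Your proof is correct and follows essentially the same route as the paper's: (a)$\Rightarrow$(b) via the stationary sequence $\{\gamma_n\}$ of Proposition~\ref{prop:exist}, (b)$\Rightarrow$(c) from $C_n(\f,U)\in\BV$, and (c)$\Rightarrow$(a) from the finite-dimensionality of $U/C(\f,U)$ obtained via Proposition~\ref{prop:lc properties}(d,e). The only cosmetic difference is in (c)$\Rightarrow$(a): the paper concludes directly from $C(\f,U)\leq C_n(\f,U)$ that $H^*(\f,U)\leq\lim_{n\to\infty}\frac{1}{n}\dim\bigl(U/C(\f,U)\bigr)=0$, whereas you first show the decreasing chain $\{C_n(\f,U)/C(\f,U)\}$ in the finite-dimensional space $U/C(\f,U)$ stabilises at $0$ and then invoke the $\gamma_n$ characterisation; both arguments are valid and rest on the same key fact.
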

\begin{proof} 
(a)$\Rightarrow$(b) Since $\dim\frac{C_n(\f,U)}{C_{n+1}(\phi,U)}=0$ eventually by Proposition~\ref{prop:exist}, there exists $n_0\in\N_+$ such that $C_n(\phi,U)=C_{n_0}(\phi,U)$ for every $n\geq n_0$.

(b)$\Rightarrow$(c) is clear since $C_n(\phi,U)\in\BV$ for every $n\in\N_+$. 

(c)$\Rightarrow$(a) If $C(\phi,U)$ is open, then $U/C(\phi,U)$ has finite dimension by Proposition~\ref{prop:lc properties}(d,e). Therefore, $H^*(\f,U)\leq\lim_{n\to \infty}\frac{1}{n}\dim\frac{U}{C(\phi,U)}=0$.  
\end{proof}

As a consequence, the topological entropy is always zero on discrete vector spaces, and so in particular on finite-dimensional vector spaces:

\begin{corollary}\label{cor:ent*discrete}
Let $\phi\colon V\to V$ be an endomorphism of a discrete vector space $V$. Then $\ent^*(\f)=0$.
\end{corollary}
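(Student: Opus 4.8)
The plan is to deduce the corollary directly from the characterization of vanishing entropy in Proposition~\ref{prop:ent*zero}, which asserts that $\ent^*(\phi)=0$ precisely when the $\phi$-cotrajectory $C(\phi,U)$ is open for every $U\in\BV$. So the whole argument reduces to checking this openness in the discrete setting.

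First I would record that the statement is even meaningful: a discrete vector space $V$ is l.l.c., since $\{0\}$ is a linearly compact (being finite-dimensional, by Proposition~\ref{prop:lc properties}(d)) open linear subspace and hence forms by itself a neighborhood basis at $0$ of the required type, and any endomorphism of a discrete space is automatically continuous. In fact, again by Proposition~\ref{prop:lc properties}(d), the members of $\BV$ are exactly the finite-dimensional subspaces of $V$, although this precise identification is not needed below.

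The key (and essentially only) observation is that in the discrete topology every linear subspace of $V$ is open. Consequently, for an arbitrary $U\in\BV$ the cotrajectory $C(\phi,U)=\bigcap_{n\in\N_+}C_n(\phi,U)$, being a linear subspace of $V$, is open; the implication (c)$\Rightarrow$(a) of Proposition~\ref{prop:ent*zero} then gives $H^*(\phi,U)=0$. Since $U\in\BV$ was arbitrary, passing to the supremum yields $\ent^*(\phi)=\sup\{H^*(\phi,U)\mid U\in\BV\}=0$. There is no genuine obstacle here: the content is entirely front-loaded into Proposition~\ref{prop:ent*zero}, and the corollary is an immediate specialization, the only point deserving a line of care being that discreteness of $V$ forces openness of $C(\phi,U)$.
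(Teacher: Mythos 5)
Your proof is correct and follows the same route the paper intends: the corollary is stated there as an immediate consequence of Proposition~\ref{prop:ent*zero}, and your observation that every linear subspace of a discrete space is open (hence $C(\phi,U)$ is open for all $U\in\BV$) is exactly the missing link. Nothing to add.
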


On the other hand, for linearly compact vector spaces we can simplify the defining formula of the topological entropy as follows. Note that if $V$ is a linearly compact vector space, then $\BV=\{U\leq V\mid U\ \text{open}\}$; indeed, an open linear subspace of a linearly compact vector space is necessarily linearly compact by Proposition~\ref{prop:lc properties}(b).

\begin{lemma}\label{H*lc}
Let $V$ be a linearly compact vector space, $\phi:V\to V$ a continuous endomorphism and $U\in\BV$. Then $$H^*(\phi,U)=\lim_{n\to\infty}\frac{1}{n}\dim\frac{V}{C_n(\phi,U)}.$$
\end{lemma}
\begin{proof}
Since $U,C_n(\phi,U)\in\BV$, we have that $V/U$ and $V/C_n(\phi,U)$ have finite dimension by Proposition~\ref{prop:lc properties}(d,e). Then $\dim\frac{U}{C_n(\phi,U)}=\dim\frac{V}{C_n(\phi,U)}-\dim\frac{V}{U}$ and hence
$$H^*(\phi,U)=\lim_{n\to\infty}\frac{1}{n}\left(\dim\frac{U}{C_n(\phi,U)}-\dim\frac{V}{U}\right)=\lim_{n\to\infty}\frac{1}{n}\dim\frac{V}{C_n(\phi,U)},$$
so we have the thesis.
\end{proof}

By Corollary~\ref{cor:tdlc}, an l.l.c.\! vector space $V$ over a discrete finite field is in particular a totally disconnected locally compact abelian group. We end this section by relating $\ent^*$ to the topological entropy $h_{top}$.

\begin{proposition}\label{lem:tdlc}
Let $V$ be an l.l.c.\! vector space over a discrete finite field $\FF$ and $\phi:V\to V$ a continuous endomorphism. Then
$$h_{top}(\f)=\ent^*(\f)\cdot\log|\FF|.$$
\end{proposition}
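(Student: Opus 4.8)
The plan is to show that for an l.l.c.\ vector space $V$ over a discrete finite field $\FF$, the two entropies $h_{top}(\phi)$ and $\ent^*(\phi)$ are computed from the same cotrajectories, and that the multiplicative factor $\log|\FF|$ arises purely from converting the group index $[U:C_n(\phi,U)]$ into the vector-space dimension $\dim(U/C_n(\phi,U))$. The key structural input is Corollary~\ref{cor:tdlc}: since $V$ is a totally disconnected locally compact abelian group and $\BV$ is a neighborhood basis at $0$ consisting of compact open subgroups (by Proposition~\ref{prop:tdlc}(b) each $U\in\BV$ is linearly compact, hence compact), the formulas for $h_{top}$ recalled in the introduction apply with $\B_{gr}(V)\supseteq\BV$.

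First I would fix $U\in\BV$ and compute the relationship between the group index and the dimension. For each $n\in\N_+$, both $U$ and $C_n(\phi,U)$ lie in $\BV$, so the quotient $U/C_n(\phi,U)$ is a finite-dimensional $\FF$-vector space by Proposition~\ref{prop:lc properties}(d,e), and its cardinality as a finite abelian group is $|\FF|^{\dim(U/C_n(\phi,U))}$. Consequently
\begin{equation*}
\log[U:C_n(\phi,U)]=\dim\frac{U}{C_n(\phi,U)}\cdot\log|\FF|.
\end{equation*}
Crucially, the cotrajectory $C_n(\phi,U)=U\cap\phi^{-1}U\cap\cdots\cap\phi^{-n+1}U$ is literally the same set whether one regards $V$ as a topological group or as a topological vector space, since $\phi$ is simultaneously a group endomorphism and a continuous linear map. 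Dividing by $n$ and taking the limit as $n\to\infty$ then yields $H_{top}(\phi,U)=H^*(\phi,U)\cdot\log|\FF|$ for every $U\in\BV$.

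Next I would pass to the suprema. For $\ent^*$ the supremum is taken over $\BV$ by definition, while for $h_{top}$ it is taken over the larger family $\B_{gr}(V)$ of all compact open subgroups. The remaining point is therefore to argue that enlarging the family does not increase the supremum of $H_{top}(\phi,-)$ beyond what $\BV$ already achieves. Here I would invoke the monotonicity of $H_{top}(\phi,-)$ together with the fact that $\BV$ is coinitial in $\B_{gr}(V)$: given any compact open subgroup $K$, the $\FF$-linear span of $K$, or more simply any $U\in\BV$ with $U\leq K$, exists because $\BV$ is a neighborhood basis at $0$; by the group-theoretic analogue of Lemma~\ref{lem:mono*} one has $H_{top}(\phi,K)\leq H_{top}(\phi,U)$ when $U\leq K$, so the supremum over $\B_{gr}(V)$ equals the supremum over $\BV$. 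Combining this with the pointwise identity gives
\begin{equation*}
h_{top}(\phi)=\sup_{U\in\BV}H_{top}(\phi,U)=\Big(\sup_{U\in\BV}H^*(\phi,U)\Big)\cdot\log|\FF|=\ent^*(\phi)\cdot\log|\FF|.
\end{equation*}

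The main obstacle I anticipate is the coinitiality/monotonicity step for $h_{top}$: one must either quote a known monotonicity property of the topological entropy $H_{top}(\phi,-)$ for compact open subgroups of totally disconnected locally compact groups (available in the references \cite{DSV,GBVirili}), or verify directly that restricting the supremum to $\BV$ suffices. The cleanest route is to observe that every $U\in\BV$ is itself a compact open subgroup, so $\BV\subseteq\B_{gr}(V)$ gives $h_{top}(\phi)\geq\sup_{U\in\BV}H_{top}(\phi,U)$ immediately, and then to use coinitiality of $\BV$ in $\B_{gr}(V)$ to obtain the reverse inequality. The pointwise dimension-versus-index computation is routine once the finiteness of $U/C_n(\phi,U)$ is in hand.
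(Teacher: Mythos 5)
Your proposal is correct and takes essentially the same approach as the paper's own proof: the pointwise identity $H_{top}(\f,U)=H^*(\f,U)\cdot\log|\FF|$ obtained from $\left|\frac{U}{C_n(\f,U)}\right|=|\FF|^{\dim\frac{U}{C_n(\f,U)}}$, combined with coinitiality of $\BV$ in $\B_{gr}(V)$ and monotonicity of $H_{top}(\f,-)$ (which the paper quotes from \cite{DGBarxiv}) to show the supremum over $\B_{gr}(V)$ may be replaced by the supremum over $\BV$. There is no gap; your argument matches the paper's step for step.
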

\begin{proof}
As $\BV$ is a local basis at $0$ in $V$ contained in $\B_{gr}(V)$, and since also $H_{top}(\f,-)$ is monotone decreasing (see~\cite[Remark 4.5.1(b)]{DGBarxiv}), we have that
\begin{equation}
h_{top}(\f)=\sup\{H_{top}(\f,U)\mid U\in\BV\}.
\end{equation}
Now, for every $U\in\BV$, $$\left|\frac{U}{C_n(\phi,U)}\right|=|\FF|^{\dim\frac{U}{C_n(\phi,U)}},$$ and so
$$H_{top}(\f,U)=\lim_{n\to\infty} \frac{1}{n}\log \card{\frac{U}{C_n(\f,U)}}=\lim_{n\to\infty} \frac{1}{n}{\dim\frac{U}{C_n(\f,U)}}\log \card{\FF}=H^*(\f,U)\cdot\log \card{\FF}.$$
Thus, $h_{top}(\f)= \ent^*(\f)\cdot\log \card{\FF}$. 
\end{proof}

The previous result points out that, as long as we are dealing with l.l.c.\! vector spaces over discrete finite fields, the topological entropy $\ent^*$ turns to be a rescaling of the topological entropy $h_{top}$ and the most natural logarithm to compute the topological entropy $h_{top}$ is the one with base $\card{\FF}$.

\subsection{Fundamental properties}\label{fp-sec}

In this section we list the general properties and examples concerning the topological entropy $\ent^*$.

\begin{lemma}\label{lem:cotraj}
Let $V$ be an l.l.c.\! vector space, $\f\colon V\to V$ a continuous endomorphism, $W$ a closed $\f$-invariant linear subspace of $V$ and $\overline\f\colon V/W\to V/W$ the continuous endomorphism induced by $\f$. For $n\in\N_+$ and $U\in\BV$ and 
$$C_n(\f\restriction_W,U\cap W)=C_n(\f,U)\cap W\quad\text{and}\quad C_n\Big(\overline\f,\frac{U+W}{W}\Big)=\frac{C_n(\f,U+W)}{W}.$$
\end{lemma}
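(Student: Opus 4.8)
The plan is to unravel both identities directly from the definition $C_n(\f,F)=\bigcap_{k=0}^{n-1}\f^{-k}F$, reducing everything to the behaviour of the preimages $\f^{-k}$ under restriction to $W$, respectively under passage to the quotient $\pi\colon V\to V/W$. The only inputs I need beyond the definitions are the $\f$-invariance $\f W\le W$ (which guarantees that $\f\restriction_W$ and $\overline\f$ are well-defined endomorphisms) and the descriptions of $\B(W)$ and $\B(V/W)$ in Equation~\eqref{eq:basis} (which are what make the two left-hand sides meaningful in the first place).

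For the first identity I would begin by recording that, by $\f$-invariance, $(\f\restriction_W)^k=\f^k\restriction_W$ for every $k$, so that for $w\in W$ the element $\f^k(w)$ lies in $W$ automatically. Hence
$(\f\restriction_W)^{-k}(U\cap W)=\{w\in W:\f^k(w)\in U\cap W\}=\{w\in W:\f^k(w)\in U\}=W\cap\f^{-k}U$,
the last equality because membership in $W$ is free. Intersecting over $k=0,\ldots,n-1$ and pulling the common factor $W$ out of the finite intersection gives $C_n(\f\restriction_W,U\cap W)=W\cap C_n(\f,U)$.

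For the second identity I would compute $\overline\f^{-k}((U+W)/W)$ by writing a generic element as $\pi(v)$ and using the intertwining relation $\overline\f^k\circ\pi=\pi\circ\f^k$: then $\pi(v)\in\overline\f^{-k}((U+W)/W)$ if and only if $\f^k(v)\in\pi^{-1}((U+W)/W)=U+W$, so that $\overline\f^{-k}((U+W)/W)=\pi(\f^{-k}(U+W))=\f^{-k}(U+W)/W$, where the final step uses $\f^{-k}(U+W)\supseteq W$. The remaining point is that $\pi$ commutes with the finite intersection of the subspaces $\f^{-k}(U+W)$: since each of these contains $W=\ker\pi$, one has $\bigcap_k\pi(A_k)=\pi(\bigcap_k A_k)$. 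Applying this yields $C_n(\overline\f,(U+W)/W)=\pi\bigl(C_n(\f,U+W)\bigr)=C_n(\f,U+W)/W$.

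I do not anticipate a genuine obstacle: both statements are formal consequences of the definitions. The one place that requires a little care is the commutation of the quotient with intersections, which relies crucially on every subspace $\f^{-k}(U+W)$ containing $W$; this holds precisely because $W$ is $\f$-invariant, so $\f^k W\le W\le U+W$. It is also worth flagging that, unlike $U$, the subspace $U+W$ need not belong to $\BV$ (it is open but possibly not linearly compact), so the symbol $C_n(\f,U+W)$ must be read as the cotrajectory of $U+W$ regarded as a mere subset of $V$, which is exactly the level of generality at which $C_n(\f,-)$ was defined.
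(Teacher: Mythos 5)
Your proof is correct and follows essentially the same route as the paper's: the paper likewise establishes the two preimage identities $(\f\restriction_W)^{-k}(U\cap W)=\f^{-k}U\cap W$ and $\overline\f^{-k}\big(\frac{U+W}{W}\big)=\frac{\f^{-k}(U+W)}{W}$ and then intersects over $k$, and you have merely spelled out the details (the role of $\f$-invariance, and the commutation of $\pi$ with finite intersections of subspaces containing $\ker\pi$) that the paper leaves implicit. Your closing remark that $U+W$ need not lie in $\BV$, so that $C_n(\f,U+W)$ is to be read as the cotrajectory of a mere subset, is a correct and worthwhile clarification rather than a deviation.
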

\begin{proof} 
Let $n\in\N_+$ and $U\in\BV$. Then $\f\restriction^{-n}_W(U\cap W)=\f^{-n} U\cap W$ and $\overline\f^{-n}(\frac{U+W}{W})=\frac{\f^{-n}(U+W)+W}{W}=\frac{\f^{-n}(U+W)}{W}$. The thesis follows form these equalities.
\end{proof}

We collect in the next result all the typical properties of an entropy function, that are satisfied by the topological entropy $\ent^*$.

\begin{proposition}\label{prop:basic prop}
Let $V$ be an l.l.c.\! vector space and $\phi\colon V \to V$ a continuous endomorphism.
\begin{enumerate}[(a)]
\item (Invariance under conjugation) If $\alpha\colon V\to W$ is a topological automorphism of l.l.c.\! vector spaces, then $\ent^*(\alpha\phi\alpha^{-1})=\ent^*(\phi)$.
\item (Monotonicity) If $W$ is a closed $\f$-invariant linear subspace of $V$ and $\overline\f\colon V/W\to V/W$ the continuous endomorphism induced by $\f$, then $\ent^*(\f)\geq \max\{\ent^*(\f\restriction_W),\ent^*(\overline\f)\}$. (If $W$ is open, then $\ent^*(\f)=\ent^*(\f\restriction_W)$.)
\item (Logarithmic law) If $k\in\N$, then $\ent^*(\phi^k)=k \cdot \ent^*(\phi)$.
\item (weak Addition Theorem) If $V = V_1 \times V_2$ for some l.l.c.\! vector spaces $V_1,V_2$, and $\phi= \phi_1 \times \phi_2: V \to V$ for some continuous endomorphisms $\phi_i\colon V_i\to V_i$, $i=1,2$, then $\ent^*(\phi) = \ent^*(\phi_1) + \ent^*(\phi_2)$. 
\item (Continuity on inverse limits) Let $\{W_i\mid i\in I\}$ be a directed system (under inverse inclusion) of closed $\f$-invariant linear subspaces of $V$. If $V=\varprojlim V/W_i$, then $\ent^*(\phi)=\sup_{i\in I} \ent^*(\overline\phi_{W_i})$, where any $\overline\f_{W_i}\colon V/W_i\to V/W_i$ is the continuous endomorphism induced by $\f$.
\end{enumerate}
\end{proposition}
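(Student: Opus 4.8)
The plan is to reduce each item to a levelwise statement about the cotrajectories $C_n(\f,U)$ and then pass to the entropy through Proposition~\ref{prop:exist} and Corollary~\ref{cor:coinitial}, the latter letting me compute $\ent^*$ on whichever neighbourhood basis inside $\BV$ is most convenient. For (a) I would first note that $U\mapsto\alpha U$ is a bijection $\BV\to\B(W)$ and that $(\alpha\f\alpha^{-1})^{-k}=\alpha\f^{-k}\alpha^{-1}$ gives $C_n(\alpha\f\alpha^{-1},\alpha U)=\alpha\,C_n(\f,U)$; since $\alpha$ is an isomorphism this forces $\dim\frac{\alpha U}{C_n(\alpha\f\alpha^{-1},\alpha U)}=\dim\frac{U}{C_n(\f,U)}$, so the two entropies agree term by term and hence after the supremum.

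For (b) I would use Equation~\eqref{eq:basis} and Lemma~\ref{lem:cotraj}. On the subspace side, $C_n(\f\restriction_W,U\cap W)=C_n(\f,U)\cap W$ produces an injection $\frac{U\cap W}{C_n(\f\restriction_W,U\cap W)}\hookrightarrow\frac{U}{C_n(\f,U)}$, whence $H^*(\f\restriction_W,U\cap W)\le H^*(\f,U)$ and, taking the supremum over $\B(W)$, $\ent^*(\f\restriction_W)\le\ent^*(\f)$. On the quotient side, the projection $\pi\colon V\to V/W$ satisfies $\pi(C_n(\f,U))\subseteq C_n(\overline\f,\pi U)$ and therefore induces a surjection $\frac{U}{C_n(\f,U)}\twoheadrightarrow\frac{\pi U}{C_n(\overline\f,\pi U)}$, giving $\ent^*(\overline\f)\le\ent^*(\f)$. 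For the parenthetical open case I would observe that if $W$ is open then $\B(W)\subseteq\BV$ and, for $U\in\B(W)$, $C_n(\f,U)=C_n(\f\restriction_W,U)$ because $C_n(\f,U)\subseteq U\subseteq W$; Corollary~\ref{cor:coinitial} then upgrades the inequality to the equality $\ent^*(\f)=\ent^*(\f\restriction_W)$.

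The remaining algebraic identities drive (c) and (d). For (c) the point is the reindexing identity $C_n(\f^k,C_k(\f,U))=C_{nk}(\f,U)$; as $\{C_k(\f,U)\mid U\in\BV\}$ is again a neighbourhood basis contained in $\BV$, Corollary~\ref{cor:coinitial} computes $\ent^*(\f^k)$ on it, while telescoping $\dim\frac{C_k(\f,U)}{C_{nk}(\f,U)}$ together with the existence of the defining limit yields $H^*(\f^k,C_k(\f,U))=k\,H^*(\f,U)$, so the supremum gives $\ent^*(\f^k)=k\,\ent^*(\f)$ (the case $k=0$ being Example~\ref{idex}(b)). For (d) I would use that $\{U_1\times U_2\}$ is a neighbourhood basis inside $\BV$ by Proposition~\ref{prop:lc properties}(f) and that $C_n(\f,U_1\times U_2)=C_n(\f_1,U_1)\times C_n(\f_2,U_2)$ splits the dimension additively, so $H^*$ is additive on these and the supremum of the sum is the sum of the suprema.

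Part (e) is where the real work lies, and monotonicity from (b) already gives $\ent^*(\f)\ge\sup_i\ent^*(\overline\f_{W_i})$. For the reverse inequality the plan is to show that each $U\in\BV$ contains some $W_i$: a basic neighbourhood of $0$ for the inverse-limit topology on $V=\varprojlim V/W_i$ contains a finite intersection of the $W_j$, which by downward directedness contains a single $W_k$, so the family $\{W_i\}$ is cofinal among neighbourhoods of $0$. Fixing $U$ and choosing $i$ with $W_i\subseteq U$, $\f$-invariance gives $W_i\subseteq C_n(\f,U)$ for every $n$ and $U+W_i=U$, so Lemma~\ref{lem:cotraj} makes $\dim\frac{(U+W_i)/W_i}{C_n(\overline\f_{W_i},(U+W_i)/W_i)}$ equal to $\dim\frac{U}{C_n(\f,U)}$ at every level; hence $H^*(\overline\f_{W_i},\pi_i U)=H^*(\f,U)$ and, taking the supremum over $U$, $\sup_i\ent^*(\overline\f_{W_i})\ge\ent^*(\f)$. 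I expect the main obstacle to be precisely this cofinality step: it must be extracted from the fact that $V=\varprojlim V/W_i$ is an identification of \emph{topological} vector spaces, since it is the inverse-limit topology---rather than the mere condition $\bigcap_i W_i=0$---that guarantees the $W_i$ shrink to $0$ and thereby makes the levelwise comparison exact.
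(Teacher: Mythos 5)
Your proposal is correct and follows essentially the same route as the paper's proof: the same cotrajectory identities (Lemma~\ref{lem:cotraj}, $C_n(\f^k,C_k(\f,U))=C_{nk}(\f,U)$, the product splitting in (d)) combined with Proposition~\ref{prop:exist}, Equation~\eqref{eq:basis} and Corollary~\ref{cor:coinitial} drive items (a)--(d), and item (e) is split exactly as in the paper into monotonicity plus the cofinality claim that every $U\in\BV$ contains some $W_k$. Your local streamlinings are valid --- notably the term-by-term equality $H^*(\f^k,C_k(\f,U))=k\,H^*(\f,U)$ in (c) in place of the paper's two opposite inequalities via Lemma~\ref{lem:mono*}, and the direct observation in (e) that a basic neighbourhood of $0$ in the inverse-limit topology already contains $\bigcap_{j\in J}W_j$, hence some $W_k$ by directedness, in place of the paper's detour through closed subspaces and \cite[Lemma~1.1.7]{profinite}.
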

\begin{proof} 
(a) Let $U\in\B(W)$ and $n\in\N_+$. Since $C_n(\alpha\f\alpha^{-1},U)=\alpha(C_n(\f,\alpha^{-1}U))$, it follows that
$$\dim \frac{U}{C_n(\alpha\f\alpha^{-1},U)}=\dim \frac{\alpha(\alpha^{-1}U)}{\alpha C_n(\f,\alpha^{-1}U)}=\dim \frac{\alpha^{-1}U}{C_n(\f,\alpha^{-1}U)}.$$
Hence $H^*(\alpha\phi\alpha^{-1},U)=H^*(\phi,\alpha^{-1}U)$. Since $\alpha$ is a topological isomorphism and $U\in\B(W)$ if and only if $\alpha^{-1} U\in\BV$, $\alpha$ induces a bijection between $\B(W)$ and $\BV$. Thus, $\ent^*(\alpha\phi\alpha^{-1})=\ent^*(\phi)$.

(b) Let $U\in\BV$. Lemma~\ref{lem:cotraj} yields $C_n(\phi\restriction_W,U\cap W)=C_n(\phi,U)\cap W$ for every $n\in\N_+$, so
$$\dim\frac{U\cap W}{C_n(\f\restriction_W,U\cap W)}\leq\dim\frac{U}{C_n(\f,U)}.$$ 
Hence, $H^*(\f\restriction_W,U\cap W)\leq H^*(\f,U)$. By Equation~\eqref{eq:basis}, $\B(W)=\{U\cap W\mid U\in\BV\}$, so we conclude that $\ent^*(\f)\geq\ent^*(\f\restriction_W)$. 
If $W$ is open, then $\B(W)$ is a neighborhood basis at $0$ in $V$, and so $\ent^*(\f)=\ent^*(\f\restriction_W)$ follows by Corollary~\ref{cor:coinitial}.

Let $U\in\BV$.  For every $n\in\N_+$, Lemma~\ref{lem:cotraj} implies that $C_n\Big(\overline\f,\frac{U+W}{W}\Big)=\frac{C_n(\f,U+W)}{W}$. Since moreover $W\leq C_n(\phi,U+W)\leq U+W$, we have that 
$$\frac{U+W}{C_n(\phi,U+W)}=\frac{U+W+C_n(\phi,U+W)}{W+C_n(\phi,U+W)}\cong\frac{U}{(W+C_n(\phi,U+W))\cap U}=\frac{U}{C_n(\phi,U+W)\cap U};$$
hence,
\begin{equation}\label{eq:iso1}
\bigslant{\frac{U+W}{W}}{C_n(\overline\f,\frac{U+W}{W})}\cong \frac{U+W}{C_n(\f, U+W)}\cong\frac{U}{C_n(\f, U+W)\cap U}.
\end{equation}
Since $C_n(\f,U)\leq C_n(\f,U+W)\cap U$, the the latter quotient vector space is actually a quotient of $\frac{U}{C_n(\f,U)}$. So,
\begin{equation}\label{eq:iso3}
\dim\Big( \bigslant{\frac{U+W}{W}}{C_n(\overline\f,\frac{U+W}{W})}\Big)\leq\dim\frac{U}{C_n(\f,U)},
\end{equation}
and hence $H^*(\overline\f,(U+W)/W)\leq H^*(\f,U)$.  Since $\B(V/W)=\left\{\frac{U+W}{W}\mid U\in\BV\right\}$ by Equation~\eqref{eq:basis}, we conclude that $\ent^*(\f)\geq\ent^*(\overline\f)$ .

(c) For $k=0$, $\ent^*(id_V)=0$ by Example~\ref{idex}. So let $k\in\N_+$ and $U\in\BV$. For every $n\in\N_+$ we have 
\begin{equation}\label{eq:ll1}
C_{nk}(\f,U)=C_n(\f^k,C_k(\f,U))\quad\text{and}\quad C_{n}(\f,C_k(\f,U))=C_{n+k-1}(\f,U).
\end{equation}
Let $E=C_k(\f,U)\in\BV$. Hence, by Lemma \ref{lem:mono*} and Equation~\eqref{eq:ll1},
\begin{eqnarray*}
k\cdot H^*(\f,U)&\leq& k\cdot H^*(\f,E)=k\cdot \lim_{n\to\infty}\frac{1}{nk}\dim\frac{E}{C_{nk}(\f,E)}=\lim_{n\to \infty}\frac{1}{n}\dim\frac{E}{C_{(n+1)k-1}(\f,U)}\\
                                              &\leq& \lim_{n\to \infty}\frac{1}{n}\dim\frac{E}{C_{(n+1)k}(\f,U)}= \lim_{n\to \infty}\frac{1}{n}\dim\frac{E}{C_{n+1}(\f^k,E)}=H^*(\f^k,E)\leq\ent^*(\f^k).
\end{eqnarray*}
Consequently, $k\cdot \ent^*(\f)\leq \ent^*(\f^k)$. Conversely, as $C_{nk}(\f,U)\leq E\leq U$, Equation~\eqref{eq:ll1} together with Lemma \ref{lem:mono*} yields
\begin{eqnarray*}
\ent^*(\f)\geq H^*(\f,U)&=&\lim_{n\to\infty}\frac{1}{nk}\dim \frac{U}{C_{nk}(\f,U)}=\lim_{n\to\infty}\frac{1}{nk}\dim\frac{U}{C_n(\f^k,E)}\\
&\geq&\lim_{n\to\infty}\frac{1}{nk}\dim\frac{E}{C_n(\f^k,E)}=\frac{1}{k}\cdot H^*(\f^k,E)\geq \frac{1}{k}H^*(\f^k,U).
\end{eqnarray*} 
So, $k\cdot \ent^*(\f)\geq\ent^*(\f^k)$.

(d) Observe that $\B=\{U_1\times U_2\mid U_i\in\mathcal{B}(V_i),i=1,2\}\subseteq \BV$ is neighborhood basis at $0$ in $V$. For $U=U_1\times U_2\in\B$, we have that $C_n(\phi,U)=C_n(\phi_1,U_1)\times C_n(\phi_2,U_2)$ for every $n\in\N_+$; therefore, $$\frac{U}{C_n(\phi,U)}=\frac{U_1\times U_2}{C_n(\phi_1,U_1)\times C_n(\phi_2,U_2)}\cong \frac{U_1}{C_n(\phi_1,U_1)}\times \frac{U_2}{C_n(\phi_2,U_2)},$$
and so $H^*(\phi,U)=\lim_{n\to \infty}\frac{1}{n}\frac{U}{C_n(\phi,U)}=\lim_{n\to\infty}\frac{1}{n}\dim\left(\frac{U_1}{C_n(\phi_1,U_1)}\times \frac{U_2}{C_n(\phi_2,U_2)}\right)=H^*(\phi_1,U_1)+H^*(\phi_2,U_2)$.
By Corollary~\ref{cor:coinitial}, we conclude that $\ent^*(\phi)=\sup\{H^*(\phi,U)\mid U\in\B\}=\ent^*(\phi_1)+\ent^*(\phi_2)$.

(e) By item (b), $\ent^*(\f)\geq\sup_{i\in I}\ent^*(\overline\f_{W_i})$. Conversely, let $U\in\BV$.  We claim that there exists $k\in I$ such that $W_k\leq U$. 
In fact, since $U$ is open in $V$, there exists an open linear subspace $A$ belonging to the canonical neighborhood basis at $0$ in $\prod_{i\in I} V/W_i$ such that $A\cap V\leq U$. Namely,  let $\pi_{i}\colon V\to V/W_i$ be the canonical projections. Since each $V/W_i$ is linearly topologized by the quotient topology, there exists a finite family $\{U_j\mid j\in J\}$, with $J\subseteq I$, of open linear subspaces of $V$ such that $W_j\leq U_j$ 
for all $j\in J$ and $A=\prod_{i\in I} A_i$, where $A_j=\pi_jU_j$ for $j\in J$ and $A_i=\pi_iV$ for $i\in I\setminus J$. Since $\{W_i\mid i\in I\}$ is directed, there exists $k\in I$ such that $j\leq k$ for all $j\in J$. Thus, given the open (and so closed) linear subspace $U_k=\bigcap_{j\in J} U_j + W_k\leq U_j$ ($j\in J$), we have that $\varprojlim\pi_iU_k$ is closed in $V$ since each $\pi_i U_k$ is open (and so closed) in $V/W_i$. Moreover, $U_k$ is a closed linear subspace of $\varprojlim\pi_i U_k$, thus by \cite[Lemma~1.1.7]{profinite}, it follows that 
$$U_k=\varprojlim_{i\in I}\pi_i U_k\leq A\cap V\leq U;$$ therefore $W_k\leq U$.

Thus, $H^*(\f,U)=H^*(\f, \pi_{k} U)$ by Lemma~\ref{lem:cotraj}, and hence $$H^*(\f,U)\leq\ent^*(\overline\f_{W_k})\leq\sup_{i\in I}\ent^*(\overline\f_{W_i}),$$ that concludes the proof.
\end{proof}

The Monotonicity property proved in item (b) of Proposition~\ref{prop:basic prop} can be improved to the following inequality, which is ``half'' of the Addition Theorem.

\begin{lemma}\label{semient}
Let $V$ be an l.l.c.\! vector space, $\f\colon V\to V$ a continuous endomorphism, $W$ a closed $\f$-invariant subspace of $V$ and $\overline\f\colon V/W\to V/W$ the continuous endomorphism induced by $\f$. Then $\ent^*(\f)\geq\ent^*(\f\restriction_W)+\ent^*(\overline\f)$.
\end{lemma}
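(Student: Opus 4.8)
The plan is to establish, for each fixed $U\in\B(V)$, the pointwise inequality
$$H^*(\f,U)\geq H^*(\f\restriction_W,U\cap W)+H^*\Big(\overline\f,\frac{U+W}{W}\Big),$$
and then to pass to the supremum using the monotonicity of $H^*(\f,-)$ (Lemma~\ref{lem:mono*}) together with the descriptions of $\B(W)$ and $\B(V/W)$ in Equation~\eqref{eq:basis}.

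For the pointwise inequality, write $C_n=C_n(\f,U)$ and filter the finite-dimensional space $U/C_n$ by its subspace $(C_n+(U\cap W))/C_n$. Since $C_n\leq U$ forces $(U\cap W)\cap C_n=W\cap C_n$, this subspace is isomorphic to $(U\cap W)/(W\cap C_n)$, and Lemma~\ref{lem:cotraj} identifies $W\cap C_n=C_n(\f\restriction_W,U\cap W)$. The corresponding quotient $U/(C_n+(U\cap W))$ is isomorphic, via $U/(U\cap W)\cong(U+W)/W$, to $\big((U+W)/W\big)\big/\big((C_n+W)/W\big)$. The key point is that $C_n=C_n(\f,U)\leq C_n(\f,U+W)$, so that $(C_n+W)/W\leq C_n(\f,U+W)/W=C_n(\overline\f,(U+W)/W)$ by Lemma~\ref{lem:cotraj}; hence the latter quotient surjects onto $\big((U+W)/W\big)\big/C_n(\overline\f,(U+W)/W)$. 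Adding dimensions across the short exact sequence (every term is finite-dimensional) yields
$$\dim\frac{U}{C_n}\geq \dim\frac{U\cap W}{C_n(\f\restriction_W,U\cap W)}+\dim\bigslant{\frac{U+W}{W}}{C_n\big(\overline\f,\frac{U+W}{W}\big)};$$
dividing by $n$ and letting $n\to\infty$ gives the pointwise inequality.

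Finally, for the supremum, given arbitrary $U_1\in\B(W)$ and $U_2\in\B(V/W)$, write $U_1=U'\cap W$ and $U_2=(U''+W)/W$ with $U',U''\in\B(V)$ by Equation~\eqref{eq:basis}, and set $U=U'\cap U''\in\B(V)$. Then $U\cap W\leq U_1$ and $(U+W)/W\leq U_2$, so Lemma~\ref{lem:mono*} gives $H^*(\f\restriction_W,U\cap W)\geq H^*(\f\restriction_W,U_1)$ and $H^*(\overline\f,(U+W)/W)\geq H^*(\overline\f,U_2)$. Combined with the pointwise inequality and $\ent^*(\f)\geq H^*(\f,U)$, this yields $\ent^*(\f)\geq H^*(\f\restriction_W,U_1)+H^*(\overline\f,U_2)$ for all such $U_1,U_2$; taking the supremum over $U_1$ and $U_2$ independently (the supremum of a sum over independent arguments is the sum of the suprema, valid in $\N\cup\{\infty\}$) gives $\ent^*(\f)\geq\ent^*(\f\restriction_W)+\ent^*(\overline\f)$.

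The main obstacle is the second paragraph: one must resist using $C_n(\f,U)$ directly in the quotient term, since the cotrajectory appearing there is $C_n(\overline\f,(U+W)/W)=C_n(\f,U+W)/W$, which is in general strictly larger than $(C_n(\f,U)+W)/W$. This mismatch is precisely what forces a surjection, hence the inequality ``$\geq$'' rather than an isomorphism; fortunately only the lower bound is needed here. A secondary care point is verifying that every quotient in sight is finite-dimensional, so that dimension is additive along the short exact sequence.
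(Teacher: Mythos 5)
Your proof is correct and follows essentially the same route as the paper: the pointwise bound $\dim\frac{U}{C_n(\f,U)}\geq\dim\frac{U\cap W}{C_n(\f\restriction_W,U\cap W)}+\dim\bigl(\bigslant{\frac{U+W}{W}}{C_n(\overline\f,\frac{U+W}{W})}\bigr)$ obtained by filtering $U/C_n(\f,U)$ through $(C_n(\f,U)+U\cap W)/C_n(\f,U)$, with the inequality coming from the inclusion of cotrajectories (your $(C_n(\f,U)+W)/W\leq C_n(\overline\f,(U+W)/W)$ is the paper's $C_n(\f,U)+U\cap W\leq C_n(\f,U+W)\cap U$ transported to $V/W$), and then Lemma~\ref{lem:cotraj} and Equation~\eqref{eq:basis} to conclude. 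Your third paragraph, choosing $U=U'\cap U''$ and invoking Lemma~\ref{lem:mono*} to justify that the suprema over $\B(W)$ and $\B(V/W)$ can be taken independently, carefully fills in a step the paper compresses into ``By Equation~\eqref{eq:basis} we have the thesis.''
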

\begin{proof}
By Lemma~\ref{lem:cotraj},
\begin{eqnarray}\label{eq:iso4}
\frac{U}{C_n(\f,U)+U\cap W}&\cong&\bigslant{\frac{U}{C_n(\f,U)}}{\frac{C_n(\f,U)+U\cap W}{C_n(\f,U)}}\nonumber\\
&\cong&\bigslant{\frac{U}{C_n(\f,U)}}{\frac{U\cap W}{C_n(\f,U)\cap U\cap W}}\cong \bigslant{\frac{U}{C_n(\f,U)}}{\frac{U\cap W}{C_n(\f\restriction_W,U\cap W)}}.
\end{eqnarray}
Since $C_n(\f,U)+U\cap W\leq  C_n(\f,U+W)\cap U$, Equations~\eqref{eq:iso1} and \eqref{eq:iso4} yield that
\begin{eqnarray}
\dim\Big(\bigslant{\frac{U+W}{W}}{C_n(\overline\f,\frac{U+W}{W})}\Big)&=&\dim\frac{U}{C_n(\phi,U+W)\cap U}\leq\dim\frac{U}{C_n(\phi,U)+U\cap W} \nonumber\\
&=&\dim\frac{U}{C_n(\f,U)}-\dim\frac{U}{C_n(\f\restriction_W,U\cap W)}.\nonumber
\end{eqnarray}
By Equation~\eqref{eq:basis} we have the thesis.
\end{proof}

The following provides the main examples in the theory of entropy functions, that is, the Bernoulli shifts.

\begin{example}\label{bernoulli}
\begin{enumerate}[(a)]
\item Let $V=V_d\times V_c$, where $$V_d=\bigoplus_{n=-\infty}^0 \K\quad\text{and}\quad V_c=\prod_{n=1}^\infty \K,$$ be endowed with the topology inherited from the product topology of $\prod_{n\in\Z}\K$. Then $V_c$ is linearly compact and $V_d$ is discrete.
In particular, $V_c$ can be identified with $0\times V_c\in\BV$. 

The \emph{left Bernoulli shift} is 
$${}_\K\beta\colon V\to V, \quad (x_n)_{n\in\Z}\mapsto (x_{n+1})_{n\in\Z}.$$
The \emph{right Bernoulli shift} is 
$$\beta_\K\colon V\to V, \quad (x_n)_{n\in\Z}\mapsto (x_{n-1})_{n\in\Z}.$$
Clearly, $\beta_\K$ and ${}_\K\beta$ are topological automorphisms of $V$, and $\beta^{-1}_\K={}_\K\beta$.

 For every $k\in\N_+$, let $U_k=0\times\prod_{n=k}^\infty \K\in\BV$, and consider $\ca{B}_f(V_c)=\{U_k\mid k\in\N_+\}\subseteq\BV$. Since $V_c\in\BV$, the family $\B_f(V_c)$ is a neighborhood basis at $0$ in $V$ contained in $\BV$. Thus, by Corollary~\ref{cor:coinitial}, for $\phi\in\{\beta_\K,{}_\K\beta\}$, $$\ent^*(\phi)=\sup\{H^*(\phi,U)\mid U\in\B_f(V_c)\}.$$  

Let $k\in\N_+$ and consider $U_k\in\B_f(V_c)$.  As $\beta_\K(U_k)\leq U_k\leq{}_\K\beta(U_k)$, we have the chain
$$\ldots\leq \beta_\K^n(U_k)\leq\ldots\leq\beta_\K(U_k)\leq U_k\leq{}_\K\beta(U_k)\leq\ldots\leq{}_\K\beta^n(U_k)\leq\ldots.$$
Therefore, for every $n\in\N_+$, $C_n(\beta_\K,U_k)=U_k$ and $C_n({}_\K\beta,U_k)=\beta_\K^{n-1}(U_k)=U_{k+n-1}$.
Thus, $H^*(\beta_\K,U_k)=0$ and, by Proposition~\ref{prop:exist}, we can compute $$H^*({}_\K\beta,U_k)=\inf_{n\in\N_+}\dim\frac{C_n({}_\K\beta,U_k)}{C_{n+1}({}_\K\beta,U_k)}=1.$$ 
We conclude that $\ent^*({}_\K\beta)=1$ and $\ent^*(\beta_\K)=0$ by Corollary~\ref{cor:coinitial}. 

\item Let now $F$ be a finite dimensional vector space and let $V=V_d\times V_c$, with
$$V_d=\bigoplus_{n=1}^\infty F\quad\mbox{and}\quad V_c=\prod_{n=-\infty}^0 F,$$
be endowed with the topology inherited from the product topology of $\prod_{n\in\Z}F$, so $V_d$ is discrete and $V_c$ is linearly compact.

The \emph{left Bernoulli shift} is
$${}_F\beta\colon V\to V, \quad (x_n)_{n\in\Z}\mapsto (x_{n+1})_{n\in\Z},$$
while the \emph{right Bernoulli shift} is
$$\beta_F\colon V\to V, \quad (x_n)_{n\in\Z}\mapsto(x_{n-1})_{n\in\Z}.$$
Clearly, $\beta_F$ and ${}_F\beta$ are topological automorphisms such that ${}_F\beta^{-1}=\beta_F.$

It is possible, slightly modifying the computations in item (a), to find that $$\ent({}_F\beta)=\dim F\quad \text{and}\quad \ent(\beta_F)=0.$$
\end{enumerate}
\end{example}

By the latter example, it is clear that for a topological automorphism $\phi:V\to V$ of an l.l.c.\! vector space $V$, in general $\ent^*(\phi^{-1})$ do not coincide with $\ent^*(\phi)$; consequently, property (c) of Proposition~\ref{prop:basic prop} cannot be extended to any integer $k\in\Z$.
In the last part of this section we find the precise relation between $\ent^*(\phi^{-1})$ and $\ent^*(\phi)$ and we deduce that equality holds in case $V$ is linearly compact.

\smallskip
In analogy to the classical modulus for topological automorphisms of locally compact groups, we define the \emph{dimension modulus} of $V$ by
$$\Delta_{\dim}\colon \Aut(V)\to \Z,\quad \f\mapsto\Delta_{\dim}(\phi,U)\quad \text{for}\ U\in\BV,$$ where
$$\Delta_{\dim}(\phi,U)=\dim\frac{\f U}{U\cap\f U}-\dim\frac{U}{U\cap\f U}.$$
In the next lemma we verify that this definition is well-posed, in fact it does not depend on the choice of $U\in\BV$.

\begin{lemma}\label{lem:mod}
Let $V$ be an l.l.c.\! vector space, $\f\colon V\to V$ a topological automorphism and $U_1,U_2\in\BV$. Then $\Delta_{\dim}(\phi,U_1)=\Delta_{\dim}(\phi,U_2)$.
\end{lemma}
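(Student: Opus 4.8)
The plan is to recast the modulus as a relative index on $\BV$ and to exploit the additivity (a cocycle identity) of that index. For $A,B\in\BV$ one first checks that $A\cap B\in\BV$: it is open, and being a closed linear subspace of the linearly compact space $A$ it is linearly compact by Proposition~\ref{prop:lc properties}(b). Consequently $A/(A\cap B)$ and $B/(A\cap B)$ are finite-dimensional by Proposition~\ref{prop:lc properties}(d,e), so one may set
$$[A:B]=\dim\frac{A}{A\cap B}-\dim\frac{B}{A\cap B}\in\Z.$$
Moreover $\phi U\in\BV$ for every $U\in\BV$, since $\phi U$ is linearly compact by Proposition~\ref{prop:lc properties}(c) and open because $\phi^{-1}$ is continuous; thus $\Delta_{\dim}(\phi,U)=[\phi U:U]$, and the task is to show that $[\phi U:U]$ is independent of $U$.

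First I would establish a common-reference formula: if $D\in\BV$ and $D\leq A\cap B$, then
$$[A:B]=\dim\frac{A}{D}-\dim\frac{B}{D},$$
which follows at once from the additivity of $\dim$ on the finite-dimensional quotients, namely $\dim\frac{A}{D}=\dim\frac{A}{A\cap B}+\dim\frac{A\cap B}{D}$ and the analogous identity for $B$. Applying this with $D=A\cap B\cap C\in\BV$ to any three members $A,B,C\in\BV$ yields the cocycle identity
$$[A:C]=[A:B]+[B:C].$$

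Next I would reduce to comparable subspaces. Since $U_1\cap U_2\in\BV$ with $U_1\cap U_2\leq U_i$, it suffices to prove that $[\phi U:U]=[\phi U':U']$ whenever $U',U\in\BV$ and $U'\leq U$; applying this with $U'=U_1\cap U_2$ to $U=U_1$ and to $U=U_2$ then gives $\Delta_{\dim}(\phi,U_1)=\Delta_{\dim}(\phi,U_1\cap U_2)=\Delta_{\dim}(\phi,U_2)$. For the comparable case the cocycle identity gives
$$[\phi U:U]=[\phi U:\phi U']+[\phi U':U']+[U':U].$$
Here $U'\leq U$ forces $\phi U'\leq\phi U$ and $U'\cap U=U'$, so $[\phi U:\phi U']=\dim\frac{\phi U}{\phi U'}$ and $[U':U]=-\dim\frac{U}{U'}$; and since $\phi$ induces a linear isomorphism $U/U'\cong\phi U/\phi U'$ one has $\dim\frac{\phi U}{\phi U'}=\dim\frac{U}{U'}$. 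The two outer terms therefore cancel and $[\phi U:U]=[\phi U':U']$, as required.

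The computation is essentially bookkeeping, so I do not expect a genuine obstacle. The only points that need care are the membership assertions $A\cap B,\ A\cap B\cap C,\ \phi U\in\BV$, which guarantee that every dimension in sight is a finite integer and which rest on Proposition~\ref{prop:lc properties}(b,c,d,e), together with the equality $\dim\frac{\phi U}{\phi U'}=\dim\frac{U}{U'}$, which uses only that $\phi$ is an isomorphism of abstract vector spaces.
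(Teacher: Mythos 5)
Your proof is correct and follows essentially the same route as the paper's: both reduce via $U_1\cap U_2\in\BV$ to the case of nested subspaces and then cancel using the fact that the automorphism preserves relative dimension, $\dim\frac{\phi U}{\phi U'}=\dim\frac{U}{U'}$. The only difference is organizational: you package the dimension bookkeeping into a cocycle identity for the relative index $[A:B]$, whereas the paper computes both moduli directly against the common subspace $U_1\cap\phi U_1$.
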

\begin{proof} 
Since $\BV$ is closed under taking finite intersections, in order to prove the thesis we can restrict to the case $U_1\cap U_2$.
Since $U_1\cap\f U_1\leq\f U_1\leq \f U_2$ and $U_1\cap\f U_1\leq U_1\leq U_2$, we have
\begin{eqnarray} 
\dim\frac{\f U_1}{U_1\cap\f U_1}&=&\dim\frac{\f U_2}{U_1\cap\f U_1}-\dim\frac{\f U_2}{\f U_1},\nonumber\\
\dim\frac{U_1}{U_1\cap\f U_1}&=&\dim\frac{U_2}{U_1\cap\f U_1}-\dim\frac{U_2}{U_1}.\nonumber
\end{eqnarray}
Since $\f$ is an automorphism,  $\dim\frac{\f U_2}{\f U_1}=\dim\frac{U_2}{U_1}$, and so 
\begin{equation*}
\Delta(\phi,U_1)=\dim\frac{\f U_1}{U_1\cap\f U_1}-\dim\frac{U_1}{U_1\cap\f U_1}=\dim\frac{\f U_2}{U_1\cap\f U_1}-\dim\frac{U_2}{U_1\cap\f U_1}.
\end{equation*}
Analogously, since $U_1\cap \f U_1\leq U_2\cap\f U_2\leq \f U_2$ and $U_1\cap \f U_1\leq U_2\cap\f U_2\leq U_2$, 
\begin{eqnarray} 
\dim\frac{\f U_2}{\f U_1\cap\phi U_2}&=&\dim\frac{\f U_2}{U_1\cap\f U_1}-\dim\frac{U_2\cap\phi U_2}{U_1\cap\f U_1},\nonumber\\
\dim\frac{U_2}{U_2\cap \phi U_2}&=&\dim\frac{U_2}{U_1\cap\f U_1}-\dim\frac{U_2\cap \phi U_2}{U_1\cap\f U_1}.\nonumber
\end{eqnarray}
Since $\f$ is an automorphism,  $\dim\frac{\f U_2}{\f U_1}=\dim\frac{U_2}{U_1}$, and so 
\begin{equation*}
\Delta(\phi,U_2)=\dim\frac{\f U_2}{U_2\cap\f U_2}-\dim\frac{U_2}{U_2\cap\f U_2}=\dim\frac{\f U_2}{U_1\cap\f U_1}-\dim\frac{U_2}{U_1\cap\f U_1}.
\end{equation*}
Therefore, $\Delta(\phi,U_1)=\Delta(\phi,U_2)$ as required.
\end{proof}

The following result provides the precise relation between $\ent^*(\phi^{-1})$ and $\ent^*(\phi)$, it is inspired by its analogue for totally disconnected locally compact groups provided in \cite[Proposition 3.2]{GB}.

\begin{proposition} 
Let $V$ be an l.l.c.\! vector space and $\f:V\to V$ a topological automorphism of $V$. If $U\in\BV$, then
$H^*(\f^{-1},U)=H^*(\f,U)-\Delta_{\dim}(\f)$.
Hence, $$\ent^*(\f^{-1})=\ent^*(\f)-\Delta_{\dim}(\f).$$
\end{proposition}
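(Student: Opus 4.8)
The plan is to prove the local statement $H^*(\f^{-1},U)=H^*(\f,U)-\Delta_{\dim}(\f)$ for a fixed $U\in\BV$, since the global equality $\ent^*(\f^{-1})=\ent^*(\f)-\Delta_{\dim}(\f)$ follows immediately by taking suprema over $U\in\BV$ (recalling from Lemma~\ref{lem:mod} that $\Delta_{\dim}(\f)$ is independent of $U$, hence constant). Using Proposition~\ref{prop:exist}, I would compute both entropies as the eventual value of the dimension differences $\dim\frac{C_n(\f,U)}{C_{n+1}(\f,U)}$ and the analogous sequence for $\f^{-1}$. The key observation is to relate the cotrajectories of $\f$ and $\f^{-1}$: since $\f$ is an automorphism, applying $\f^{n-1}$ to $C_n(\f,U)=U\cap\f^{-1}U\cap\cdots\cap\f^{-n+1}U$ gives
\begin{equation*}
\f^{n-1}C_n(\f,U)=\f^{n-1}U\cap\f^{n-2}U\cap\cdots\cap U=C_n(\f^{-1},U),
\end{equation*}
so the two cotrajectories are carried onto each other by powers of the automorphism.

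First I would set $C_n=C_n(\f,U)$ and $C_n'=C_n(\f^{-1},U)$ and track the finite-dimensional quotients $\dim\frac{U}{C_n}$ and $\dim\frac{U}{C_n'}$. The natural route is to relate each to a common quantity, namely $\dim\frac{\f^{n-1}U}{C_n'}$ or $\dim\frac{T_n}{U}$-type expressions, and to control the ``defect'' incurred at each step by the modulus. Concretely, I would use the isomorphism $\frac{U}{C_{n+1}}\cong$ (extension of $\frac{U}{C_n}$ by $\frac{C_n}{C_{n+1}}$) from Proposition~\ref{prop:exist}, and the parallel one for $\f^{-1}$, and then compare the two stationary increments by inserting the one-step comparison between $U$ and $\f U$ that defines $\Delta_{\dim}(\f)$. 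The cleanest formulation is likely to establish, for each $n$, a relation of the form
\begin{equation*}
\dim\frac{U}{C_n'}-\dim\frac{U}{C_n}=(n-1)\,\Delta_{\dim}(\f)+c,
\end{equation*}
with $c$ independent of $n$ (absorbing boundary terms), from which dividing by $n$ and letting $n\to\infty$ yields the claimed difference of entropies.

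To produce this relation I would exploit the automorphism identity above together with the modular computation: applying $\f$ to a linearly compact subspace and comparing dimensions of $\frac{\f A}{A\cap\f A}$ and $\frac{A}{A\cap\f A}$ telescopes, across the chain $C_n\geq C_{n+1}$, into a cumulative modulus factor, because $\dim\frac{\f W}{W}$ (interpreted via $\Delta_{\dim}$) is additive along the filtration. Since $\f^{n-1}$ is a topological isomorphism it preserves all relevant dimensions, so $\dim\frac{\f^{n-1}U}{C_n'}=\dim\frac{U}{C_n}$, and the discrepancy between $\dim\frac{U}{C_n'}$ and $\dim\frac{\f^{n-1}U}{C_n'}$ is exactly $\dim\frac{\f^{n-1}U}{U\cap\f^{n-1}U}-\dim\frac{U}{U\cap\f^{n-1}U}$, which by telescoping the definition of $\Delta_{\dim}$ over the $n-1$ successive applications of $\f$ equals $(n-1)\Delta_{\dim}(\f)$ up to a bounded correction.

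I expect the main obstacle to be the telescoping argument: showing that the single-step modular defect $\Delta_{\dim}(\f)$ accumulates additively to $(n-1)\Delta_{\dim}(\f)$ over the iterated images $U, \f U,\ldots,\f^{n-1}U$. This requires careful bookkeeping of the finite-dimensional quotients and verifying that the correction terms (the boundary contributions from $U\cap\f^{n-1}U$ versus the nested intersections defining $C_n$ and $C_n'$) stay bounded independently of $n$, so that they vanish after dividing by $n$. The well-definedness from Lemma~\ref{lem:mod} is what makes the per-step contribution constant and hence the telescoping clean; the delicate point is only matching up the intersection lattices correctly, which is routine dimension-counting once the automorphism-conjugacy $\f^{n-1}C_n=C_n'$ is in hand.
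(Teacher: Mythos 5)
Your proposal is correct in substance but follows a genuinely different route from the paper. The paper works \emph{incrementally}: writing $C_{n+1}(\f,U)=C_n(\f,U)\cap\f^{-1}C_n(\f,U)$ and $C_{n+1}(\f^{-1},U)=C_n(\f^{-1},U)\cap\f C_n(\f^{-1},U)$, it conjugates by a power of $\f$ to identify the $n$-th increment $\dim\frac{C_n(\f,U)}{C_{n+1}(\f,U)}$ with $\dim\frac{\f W_n}{W_n\cap\f W_n}$ for $W_n=C_n(\f^{-1},U)\in\BV$, so that the difference of the two increments equals $\Delta_{\dim}(\f,W_n)=\Delta_{\dim}(\f)$ \emph{exactly, for every $n$}, by Lemma~\ref{lem:mod}; Proposition~\ref{prop:exist} (stationarity of the increments) then finishes the proof with no telescoping and no extra lemma. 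You instead compare the \emph{cumulative} codimensions: from your conjugacy $\f^{n-1}C_n(\f,U)=C_n(\f^{-1},U)$ (which is the correct exponent; the paper's displayed identity $C_n(\f^{-1},U)=\f^n C_n(\f,U)$ carries a harmless off-by-one) you reduce $\dim\frac{U}{C_n(\f,U)}-\dim\frac{U}{C_n(\f^{-1},U)}$ to $\Delta_{\dim}(\f^{n-1},U)$, and you then need the power-additivity $\Delta_{\dim}(\f^{k})=k\,\Delta_{\dim}(\f)$, a lemma that does not appear in the paper. That lemma is true and is proved exactly by the telescoping you anticipate: for $A,B\in\BV$ set $d(A,B)=\dim\frac{A}{A\cap B}-\dim\frac{B}{A\cap B}$; refining all terms to the triple intersection $A\cap B\cap C$ gives the cocycle identity $d(A,C)=d(A,B)+d(B,C)$, whence $\Delta_{\dim}(\f^k)=d(\f^kU,U)=\sum_{j=1}^{k}d(\f^jU,\f^{j-1}U)$, and each summand equals $\Delta_{\dim}(\f,\f^{j-1}U)=\Delta_{\dim}(\f)$ because $\f^{j-1}U\in\BV$ and Lemma~\ref{lem:mod} applies. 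With this in hand your relation is exact with no bounded correction, namely $\dim\frac{U}{C_n(\f^{-1},U)}=\dim\frac{U}{C_n(\f,U)}-(n-1)\Delta_{\dim}(\f)$ --- note the sign: the displayed guess in your second paragraph has $+(n-1)\Delta_{\dim}(\f)$, which is reversed, as your own third-paragraph computation shows --- and dividing by $n$ and letting $n\to\infty$ gives the statement. What each approach buys: the paper's increment comparison is shorter and uses only lemmas already available; yours costs one extra (routine) lemma but yields as byproducts the exact cumulative formula above and the independently useful fact that the dimension modulus is additive on powers of an automorphism.
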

\begin{proof} 
Since  $C_n(\f^{-1},U)=\f^nC_n(\f,U)$ and $C_{n+1}(\f,U)=C_n(\f,U)\cap\f^{-1}C_n(\f,U)$ for every $n\in\N_+$,
\begin{eqnarray}
\dim\frac{C_n(\f,U)}{C_{n+1}(\f,U)}&=&\dim\frac{C_n(\f,U)}{C_n(\f,U)\cap\f^{-1}C_n(\f,U)} \nonumber\\ 
&=& \dim\frac{\f^{n+1}C_n(\f,U)}{\f^{n+1}C_n(\f,U)\cap\f^nC_n(\f,U)}=\dim\frac{\f C_n(\f^{-1},U)}{\f C_n(\f^{-1},U)\cap C_n(\f^{-1},U)}.\nonumber
\end{eqnarray}
By Proposition~\ref{prop:exist} and Lemma~\ref{lem:mod}, it follows that 
\begin{eqnarray}\nonumber
H^*(\f,U)-H^*(\f^{-1},U)= \inf_{n\in\N_+} \left(\dim\frac{C_n(\f,U)}{C_{n+1}(\f,U)}-\dim\frac{C_n(\f^{-1},U)}{C_{n+1}(\f^{-1},U)}\right)=\\ \nonumber
=\inf_{n\in\N_+}\left(\dim\frac{\f C_n(\f^{-1},U)}{\f C_n(\f^{-1},U)\cap C_n(\f^{-1},U)}-\dim\frac{ C_n(\f^{-1},U)}{C_n(\f^{-1},U)\cap\f C_n(\f^{-1},U)}\right)=\Delta_{\dim}(\f),
\end{eqnarray}
since $C_n(\f^{-1},U)\in\BV$ for all $n\in\N_+$.
\end{proof}

If $V$ is a linearly compact vector space and $\phi:V\to V$ a topological automorphism, then clearly $\Delta_{\dim}(\phi)=0$. Therefore, we have the following direct consequence of the above proposition.

\begin{corollary}
Let $V$ be a linearly compact vector space and $\phi:V\to V$ a topological automorphism. If $U\in\BV$, then $H^*(\f^{-1},U)=H^*(\f,U)$.
Hence, $$\ent^*(\f^{-1})=\ent^*(\f).$$
\end{corollary}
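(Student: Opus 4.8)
The plan is to reduce the Corollary to the preceding Proposition, which already gives
\[
H^*(\f^{-1},U)=H^*(\f,U)-\Delta_{\dim}(\f)\qquad\text{and}\qquad \ent^*(\f^{-1})=\ent^*(\f)-\Delta_{\dim}(\f).
\]
Thus it suffices to prove that the dimension modulus vanishes, i.e. $\Delta_{\dim}(\f)=0$, whenever $V$ is linearly compact; substituting this into the two displayed identities yields both assertions at once.

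To show $\Delta_{\dim}(\f)=0$, I would fix $U\in\BV$ and first record that, since $V$ is linearly compact, every member of $\BV$ is an open linear subspace of \emph{finite} codimension in $V$ by Proposition~\ref{prop:lc properties}(d,e). Because $\f$ is a topological automorphism, $\f U\in\BV$ as well, and hence $W:=U\cap \f U\in\BV$; in particular $U$, $\f U$ and $W$ all have finite codimension in $V$. Next I would use that $\f$ carries $U$ isomorphically onto $\f U$ and therefore induces an isomorphism $V/U\cong V/\f U$, so that $\dim(V/U)=\dim(V/\f U)$. Expanding $\dim(V/W)$ along the two towers $W\leq U\leq V$ and $W\leq \f U\leq V$ gives
\[
\dim\frac{V}{U}+\dim\frac{U}{W}=\dim\frac{V}{W}=\dim\frac{V}{\f U}+\dim\frac{\f U}{W}.
\]
Cancelling the equal terms $\dim(V/U)=\dim(V/\f U)$ leaves $\dim(U/W)=\dim(\f U/W)$, which is exactly the statement $\Delta_{\dim}(\f,U)=0$. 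By Lemma~\ref{lem:mod} this value does not depend on the choice of $U$, so $\Delta_{\dim}(\f)=0$, and the preceding Proposition then yields $H^*(\f^{-1},U)=H^*(\f,U)$ for every $U\in\BV$ and, after taking suprema, $\ent^*(\f^{-1})=\ent^*(\f)$.

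The argument presents no genuine obstacle; the only point requiring care is that all quotients occurring in the telescoping computation be finite-dimensional, so that the additivity of $\dim$ over subspace towers is legitimate. This is precisely what linear compactness guarantees through Proposition~\ref{prop:lc properties}(d,e), and it is also what makes $\dim(V/U)=\dim(V/\f U)$ a finite equality rather than a formal one. Everything else is elementary dimension bookkeeping together with the invariance of $\Delta_{\dim}$ under the choice of $U$ established in Lemma~\ref{lem:mod}.
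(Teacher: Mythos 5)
Your proposal is correct and follows essentially the same route as the paper: the paper also deduces the corollary from the preceding proposition by observing that $\Delta_{\dim}(\f)=0$ whenever $V$ is linearly compact. The only difference is that where you run a telescoping dimension count for an arbitrary $U\in\BV$, the paper treats the vanishing as immediate --- indeed one can simply evaluate the modulus at $U=V\in\BV$ (legitimate since $V$ is open in itself and linearly compact), where $\f V=V$ makes both terms of $\Delta_{\dim}(\f,V)$ vanish, and then invoke Lemma~\ref{lem:mod}.
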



\subsection{Limit-free Formula}\label{ss:lff}

This section is devoted to prove Proposition~\ref{prop:lff}, which is a formula for the computation of the topological entropy avoiding the limit in the definition. The proof follows the technique used in \cite[Proposition~3.9]{GBVirili}, which was developed by Willis in \cite{Willis}.

\begin{definition}\label{def:uplus}
Let $V$ be an l.l.c.\! vector space, $\f:V\to V$ a continuous endomorphism and $U\in\BV$. Let:
\begin{itemize}
\item[-] $U_0=U$;
\item[-] $U_{n+1}=U\cap\f U_n$, for all $n\in\N$;
\item[-] $U_+=\bigcap_{n\in\N}U_n$. 
\end{itemize}
\end{definition}
For every $n\in\N_+$, $U_n\geq U_{n+1}\geq U_+$; moreover, each $U_n$ is linearly compact and so is $U_+$ (see Proposition~\ref{prop:lc properties}(a,b,c)). 
Furthermore, it is possible to prove by induction that, for every $n\in\N$,
\begin{equation}\label{eq:Un}
U_n=\{u\in U\mid \exists v\in U,\ u=\f^n(v)\ \text{and}\ \f^j(v)\in U\ \forall j\in\{0,\ldots,n\}\}.
\end{equation}
Since $C_{n+1}(\f,U)=\{u\in U\mid \f^k(u)\in U\ \forall k\in\{0,\ldots,n\}\}$, it follows that
\begin{equation}\label{eq:cotrajUn}
\f^n C_{n+1}(\f,U)=U_n\quad\forall n\in\N.
\end{equation}
For every $n\in\N$, $U_n=C_{n+1}(\f^{-1},U)$ whenever $\f$ is also injective.

\smallskip
In the following result we collect the main properties of the linearly compact subgroup $U_+$ of an l.l.c.\! vector space $V$ for $U\in\BV$.

\begin{lemma}\label{prop:Uplus}
Let $V$ be an l.l.c.\! vector space, $\phi:V\to V$ a continuous endomorphism and $U\in\BV$. Then:
\begin{enumerate}[(a)]
\item $U_+$ is the largest linear subspace of $U$ such that $U_+\leq\f U_+$;
\item $U_+=U\cap\f U_+$;
\item  $\phi U_+/U_+$ has finite dimension.
\end{enumerate}
\end{lemma}
\begin{proof} 
(a) Since $U_{n+1}\leq U_n\leq \f U_{n-1}$ for all $n\in\N_+$, by applying Theorem~\ref{thm:warner}(a) to $U$ and the decreasing chain $\{U_n\}_{n\in\N}$, we have
$$U_+=\bigcap_{n\in\N} U_n \leq \bigcap_{n\in\N}\f U_n=\f\Big(\bigcap_{n\in\N}U_n\Big)=\f U_+.$$

Moreover, for every linear subspace $W$ of $V$ such that $W\leq U$ and $W\leq \f W$, it is possible to prove by induction that $W\leq U_n$ for all $n\in\N$, and so $W\leq U_+$.

(b) By construction,
$$U\cap\f U_+=\bigcap_{n\in\N}(U\cap\f U_n)=\bigcap_{n\in\N}U_{n+1}=U_+.$$

(c) Since $U_+=U\cap \phi U_+$ by item (b), $U_+$ is open in $\phi U_+$, which is linearly compact. Then $\phi U_+/U_+$ has finite dimension by Proposition~\ref{prop:lc properties}(d,e).
\end{proof}

%

We are now in position to prove the Limit-free Formula.

\begin{proposition}[Limit-free Formula]\label{prop:lff}
Let $V$ be an l.l.c.\! vector space, $\f:V\to V$ a continuous endomorphism and $U\in\BV$. Then
\begin{equation}\label{eq:lff}
H^*(\f,U)=\dim\frac{\f U_+}{ U_+}\big.
\end{equation}
\end{proposition}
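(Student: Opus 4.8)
The plan is to connect the quantity $\dim(\phi U_+/U_+)$ to the stationary value $\gamma$ of the sequence $\gamma_n=\dim(C_n(\phi,U)/C_{n+1}(\phi,U))$, which by Proposition~\ref{prop:exist} equals $H^*(\phi,U)$. The key bridge is Equation~\eqref{eq:cotrajUn}, namely $\phi^n C_{n+1}(\phi,U)=U_n$. First I would use this to rewrite the successive quotients $U_n/U_{n+1}$ in terms of the cotrajectory quotients. Applying $\phi^n$ to the inclusion $C_{n+2}(\phi,U)\leq C_{n+1}(\phi,U)$ and comparing with Equation~\eqref{eq:cotrajUn}, one expects $\dim(U_n/U_{n+1})$ to be controlled by $\gamma_{n+1}=\dim(C_{n+1}(\phi,U)/C_{n+2}(\phi,U))$, up to the drop in injectivity of $\phi^n$ on the relevant subspaces.

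Next I would pass to the limit along the decreasing chain $U=U_0\geq U_1\geq\cdots\geq U_+$. Since each $U_n$ is linearly compact with $U_+=\bigcap_n U_n$, and since $U_+\leq\phi U_+$ with $\phi U_+/U_+$ finite-dimensional by Lemma~\ref{prop:Uplus}(c), the idea is to express $\dim(\phi U_+/U_+)$ as a telescoping sum or limit of the $\dim(U_n/U_{n+1})$. Concretely, because $U_+=U\cap\phi U_+$ (Lemma~\ref{prop:Uplus}(b)) and $U_n=U\cap\phi U_{n-1}$, I would track how $\phi$ maps the chain $\{U_n\}$ into the chain $\{\phi U_n\}$ and identify $\phi U_+/U_+$ with the stable cokernel. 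The natural tool here is Theorem~\ref{thm:warner}(a), which lets me commute $\phi$ with the infinite intersection defining $U_+$, exactly as already used in the proof of Lemma~\ref{prop:Uplus}(a).

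The cleanest route is probably to show directly that $\dim(\phi U_+/U_+)=\gamma$, the eventual constant value. I would argue that for $n$ large enough the maps induced by $\phi$ between consecutive quotients $C_n(\phi,U)/C_{n+1}(\phi,U)$ stabilize to isomorphisms onto $\gamma$-dimensional spaces (the sequence $\gamma_n$ being stationary by Proposition~\ref{prop:exist}), and that the restriction of $\phi$ to $U_+$ realizes precisely this stable quotient. That is, $\phi U_+/U_+$ should be isomorphic to the inverse limit (or the stable term) of the system $C_n(\phi,U)/C_{n+1}(\phi,U)$ transported via the isomorphisms $\phi^n C_{n+1}(\phi,U)=U_n$.

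The main obstacle I anticipate is that $\phi$ need not be injective, so applying $\phi^n$ to the cotrajectory chain can lose dimension, and the identifications $\phi^n C_{n+1}(\phi,U)=U_n$ do not immediately give dimension-preserving isomorphisms $C_n/C_{n+1}\cong U_{n-1}/U_n$. I would handle this by working with the induced injective map $\tilde\phi$ on appropriate quotients (as in the proof of Lemma~\ref{cn}), or by invoking that, for $n$ beyond the stabilization index $n_0$, the relevant kernels have stabilized so that $\phi$ acts as an isomorphism between the stable quotients. Verifying carefully that the finite-dimensional quotient $\phi U_+/U_+$ captures exactly $\gamma$ — neither more (from transient terms) nor less (from kernel collapse) — is the delicate point, and Theorem~\ref{thm:warner}(a) together with the finite-dimensionality from Lemma~\ref{prop:Uplus}(c) should make the passage to the limit rigorous.
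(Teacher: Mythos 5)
Your high-level target is the right one, and it is the same as the paper's: reduce, via Proposition~\ref{prop:exist}, to showing that $\dim(\phi U_+/U_+)$ equals the stationary value $\gamma$ of $\gamma_n=\dim(C_n(\phi,U)/C_{n+1}(\phi,U))$, with Equation~\eqref{eq:cotrajUn} as the bridge. But the concrete steps you propose would fail. The quotients $U_n/U_{n+1}$ are the wrong objects: they measure how fast the chain $\{U_n\}$ descends, not the growth of $\phi$ on $U_+$, and they are not controlled by $\gamma_{n+1}$. Indeed, for the right Bernoulli shift $\beta_\K$ with $U=U_k$ as in Example~\ref{bernoulli}(a), the chain of Definition~\ref{def:uplus} is $U_n=U_{k+n}$, so $\dim(U_n/U_{n+1})=1$ for every $n$, while $C_n(\beta_\K,U_k)=U_k$ gives $\gamma_{n+1}=0$; for the left shift ${}_\K\beta$ the chain is constant ($U_n=U_k$ for all $n$), so every $\dim(U_n/U_{n+1})=0$, while $\gamma=1=\dim(\phi U_+/U_+)$. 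These two examples refute both the expected bound in your first paragraph and the telescoping-sum/limit idea in your second: no combination of the numbers $\dim(U_n/U_{n+1})$ can recover $\gamma$. The correct bridge, which is what the paper actually proves, is $C_{m+1}/C_{m+2}\cong \phi U_m/U_{m+1}\cong(\phi U_m+U)/U$ (note $\phi U_m$, not $U_m$): the map $x\mapsto \phi^{m+1}(x)+U_{m+1}$ is a surjection $C_{m+1}(\phi,U)\to\phi U_m/U_{m+1}$ whose kernel is exactly $C_{m+2}(\phi,U)$ by Equation~\eqref{eq:cotrajUn}; this identification is exact for every $m$, so the non-injectivity of $\phi$ that worries you in your last paragraph never requires any ``stabilization of kernels.''

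Second, Theorem~\ref{thm:warner}(a) alone cannot make your passage to the limit rigorous; the step that is really needed is Theorem~\ref{thm:warner}(b), the (Ab5*)-type identity. The paper's argument runs as follows: the decreasing sequence of finite dimensions $\dim((\phi U_n+U)/U)$ stabilizes, so for $m$ large
\[
\phi U_m+U=\bigcap_{n\in\N}(\phi U_n+U)=\Big(\bigcap_{n\in\N}\phi U_n\Big)+U=\phi U_++U,
\]
where the middle equality is Theorem~\ref{thm:warner}(b) applied inside the linearly compact space $U+\phi U$ to the chain $\{\phi U_n\}_{n\in\N}$ of closed subspaces with $M=U$, and only the last equality is Theorem~\ref{thm:warner}(a), as in Lemma~\ref{prop:Uplus}(a). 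Since $U_+=U\cap\phi U_+$ and $U_{m+1}=U\cap\phi U_m$, this yields $\dim(\phi U_+/U_+)=\dim((\phi U_m+U)/U)=\dim(\phi U_m/U_{m+1})=\gamma_{m+1}=\gamma$. Without part (b) you only know the trivial inclusion $\phi U_++U\subseteq\bigcap_n(\phi U_n+U)$, which gives the inequality $\dim(\phi U_+/U_+)\le\gamma$ but not the reverse; that reverse inequality is precisely the delicate point you flag at the end, and it is the (Ab5*) property of linearly compact spaces, not part (a), that settles it.
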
 
\begin{proof}
Let $U\in\BV$. For every $n\in\N_+$, let
$$\gamma_n=\dim\frac{C_n(\phi,U)}{C_{n+1}(\phi,U)}.$$
By Proposition~\ref{prop:exist}, the sequence $\{\gamma_n\}_{n\in\N}$ is stationary, and $H^*(\f,U)=\gamma$ where $\gamma$ is the value of the stationary sequence $\{\gamma_n\}_{n\in\N}$ for $n\in\N_+$ large enough. Hence, it suffices to prove that 
\begin{equation}\label{lffeq}
\dim\frac{\f U_+}{U_+}=\gamma.
\end{equation}

Since $\f U_n+U$ is linearly compact for every $n\in\N$ by Proposition~\ref{prop:lc properties}(f), thus $\dim\frac{\f U_n+U}{U}$ is finite, being $U$ open, by Proposition~\ref{prop:lc properties}(d,e). Moreover, since $\f U_n\geq\f U_{n+1}$ for all $n\in\N$, the sequence of non-negative integers $\left\{\dim\frac{\f U_n+U}{U}\right\}_{n\in\N}$ is decreasing, and so stationary. Thus, there exists $n_0\in\N$ such that, for every $n\geq n_0$,
$$\gamma=\dim\frac{C_n(\phi,U)}{C_{n+1}(\phi,U)}\quad \text{and}\quad \f U_n +U=\f U_{n_0} +U;$$
since $\{\phi U_n+U\}_{n\in\N}$ is a decreasing chain, $$\f U_{n_0} +U=\bigcap_{n\in\N}(\phi U_n+U).$$
Let $m\geq n_0.$ Theorem~\ref{thm:warner}(b) applied to the linearly compact linear subspace $U+\f U$ and the descending chain $\{U_n\}_{n\in\N}$ yields
\begin{equation*}\label{eq:sum}
\f U_m + U=\bigcap_{n\in\N} (\f U_n+U)=\bigg(\bigcap_{n\in\N} \f U_n\bigg)+U=\f\bigg(\bigcap_{n\in\N} U_n\bigg)+U=\f U_++U.
\end{equation*}
As $U_+=U\cap\f U_+$ by Lemma~\ref{prop:Uplus}(b), we have
\begin{eqnarray*}
\dim\frac{\f U_+}{U_+}&=&\dim\frac{\f U_+}{U\cap\f U_+}=\dim\frac{U+\f U_+}{U}\\
&=&\dim\frac{U+\f U_m}{U}=\dim\frac{\f U_m}{U\cap\f U_m}=\dim\frac{\f U_m}{U_{m+1}}.
\end{eqnarray*}
Equation~\eqref{eq:cotrajUn} now gives $\f^{m+1}C_{m+1}(\f,U)=\f U_m$, so there exists a surjective homomorphism
$$\varphi:C_{m+1}(\phi,U)\to\f U_m/U_{m+1},\quad x\mapsto \f^{m+1}(x)+U_{m+1},$$
such that $\ker\varphi=C_{m+2}(\phi,U)$. Hence, $$\frac{\phi U_m}{U_{m+1}}\cong \frac{C_{m+1}(\phi,U)}{C_{m+2}(\phi,U)}.$$
Finally, $\dim\frac{\f U_+}{U_+}=\dim\frac{\f U_m}{U_{m+1}}=\dim\frac{C_{m+1}(\phi,U)}{C_{m+2}(\phi,U)}=\gamma$, as required in Equation~\eqref{lffeq}.
\end{proof}

The following useful consequence of the Limit-free Formula is inspired by its analogue in the context of totally disconnected locally compact groups. Here we adapt the proof of \cite[Proposition 3.11]{GBVirili} to our context for reader's convenience.

\begin{corollary}\label{cor:ent*lc}
Let $V$ be an l.l.c.\! vector space and $\phi:V\to V$ a continuous endomorphism. Then
$$\ent^*(\phi)=\sup\left\{\dim\frac{\phi M}{M}\mid M\leq\phi M\leq V,\ \text{$M$ linearly compact},\  \dim\frac{\phi M}{M}<\infty\right\}=:s.$$
\end{corollary}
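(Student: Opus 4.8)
The plan is to prove the two inequalities $\ent^*(\phi)\le s$ and $s\le\ent^*(\phi)$ separately, using the Limit-free Formula (Proposition~\ref{prop:lff}) as the main engine. For $\ent^*(\phi)\le s$ I would argue that every value $H^*(\phi,U)$ is already of the required form: fix $U\in\BV$ and set $M:=U_+$. By the discussion following Definition~\ref{def:uplus} the subspace $U_+$ is linearly compact, by Lemma~\ref{prop:Uplus}(a) it satisfies $M\le\phi M$, and by Lemma~\ref{prop:Uplus}(c) the codimension $\dim(\phi M/M)$ is finite; thus $M$ lies in the family over which $s$ is taken. Since $H^*(\phi,U)=\dim(\phi U_+/U_+)=\dim(\phi M/M)\le s$ by Proposition~\ref{prop:lff}, taking the supremum over $U\in\BV$ gives $\ent^*(\phi)\le s$.

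For the reverse inequality it suffices, given any linearly compact $M$ with $M\le\phi M$ and $d:=\dim(\phi M/M)<\infty$, to exhibit a single $U\in\BV$ with $H^*(\phi,U)\ge d$. First I would record that $M$ is open in $\phi M$: indeed $\phi M$ is linearly compact by Proposition~\ref{prop:lc properties}(c), $M$ is a closed subspace of it by Proposition~\ref{prop:lc properties}(a), and $\phi M/M$ is finite-dimensional, hence discrete, so $M$ is the preimage of $0$ under the quotient map $\phi M\to\phi M/M$ and is therefore open. Consequently there is $O\in\BV$ with $O\cap\phi M\le M$. I then set $U:=M+O$, which is an open linear subspace (it contains $O$) and is linearly compact as the image of the linearly compact space $M\times O$ under addition (Proposition~\ref{prop:lc properties}(c,f)); hence $U\in\BV$.

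The key computation is then short. By the modular law, and since $M\le\phi M$, one gets $U\cap\phi M=(M+O)\cap\phi M=M+(O\cap\phi M)=M$. Because $M\le U$ and $M\le\phi M$, Lemma~\ref{prop:Uplus}(a) yields $M\le U_+$, and therefore $M\le U_+\cap\phi M\le U\cap\phi M=M$, that is $U_+\cap\phi M=M$. Now the composite $\phi M\hookrightarrow\phi U_+\to \phi U_+/U_+$ (legitimate since $M\le U_+$ forces $\phi M\le\phi U_+$) has kernel $\phi M\cap U_+=M$, so it induces an injection $\phi M/M\hookrightarrow \phi U_+/U_+$. Invoking the Limit-free Formula once more, $H^*(\phi,U)=\dim(\phi U_+/U_+)\ge\dim(\phi M/M)=d$, whence $\ent^*(\phi)\ge d$. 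Taking the supremum over all admissible $M$ gives $s\le\ent^*(\phi)$.

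The only genuinely delicate point is the reverse inequality, and the idea that makes it painless is to avoid trying to force $U_+=M$ (the naive analogue of the group-theoretic ``tidy subgroup'' construction, which is awkward here because $\phi$ need not be injective): the weaker relation $U_+\cap\phi M=M$, guaranteed by the choice $O\cap\phi M\le M$, already produces the injection $\phi M/M\hookrightarrow \phi U_+/U_+$ and hence the desired lower bound. I expect the main obstacle to be bookkeeping --- checking that $U=M+O$ is an admissible member of $\BV$ and justifying the modular-law identity --- rather than any conceptual difficulty.
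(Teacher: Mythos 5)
Your proof is correct and follows essentially the same route as the paper: both directions rest on the Limit-free Formula (Proposition~\ref{prop:lff}), and for the lower bound both arguments produce a $U\in\BV$ with $U\cap\phi M=M$, deduce $M\le U_+$ from Lemma~\ref{prop:Uplus}(a), and conclude by embedding $\phi M/M$ into $\phi U_+/U_+$. The only difference is that the paper obtains such a $U$ by citing Equation~\eqref{eq:basis}, whereas you construct $U=M+O$ by hand and verify $U\in\BV$ directly --- a harmless, slightly more self-contained variant of the same step.
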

\begin{proof}
By Proposition~\ref{prop:lff}, $\ent^*(\phi)\leq s$. To prove the converse inequality, let $M$ be a linearly compact linear subspace of $V$ such that $M\leq \phi M$ and $\phi M/M$ has finite dimension. By Proposition~\ref{prop:lc properties}(a,d,e), this implies that $M$ is open in $\phi M$, since $M$ is closed and $\phi M$ is linearly compact, namely, $M\in\B(\phi M)$. By Equation~\eqref{eq:basis}, there exists $U\in\B(V)$ such that $M=U\cap\phi M$. As $M\leq\f M$ and $M\leq U$, we deduce that $M\leq U_+$ by Lemma~\ref{prop:Uplus}(a). Therefore $\f M\leq\f U_+$, and so
$$\ent^*(\f)\geq\dim\frac{\phi U_+}{ U_+}=\dim\frac{\phi U_+}{U\cap\phi U_+}\geq\dim\frac{\phi U_+\cap\f M}{U\cap\phi U_+\cap\f M}=\dim\frac{\phi M}{U\cap \phi M}=\dim\frac{\phi M}{M},$$
since  $U_+=U\cap\f U_+$ by Lemma~\ref{prop:Uplus}(b). Finally, it follows that $s\leq \ent^*(\phi)$.
\end{proof}

\section{Reductions for the computation of the topological entropy}

In this section we provide two reductions that can be used to simplify the computation of the topological entropy. The first one follows from the Limit-free Formula and shows that it is sufficient to consider linearly compact vector spaces. Once we restrict to linearly compact vector spaces, we make a second reduction to topological automorphisms.

\subsection{Reduction to linearly compact vector spaces}\label{s:rlc}

Let $V$ be an l.l.c.\! vector space and $\phi:V\to V$ a continuous endomorphism. By Theorem~\ref{thm:dec} we can assume that $V=V_c\oplus V_d$, where $V_c\in\BV$ and consequently $V_d$ is discrete. Let
\begin{equation*}
\iota_*\colon V_*\to V,\quad p_*\colon V\to V_*
\end{equation*}
with $*\in\{c,d\}$ be the canonical injections and projections, respectively. Accordingly, we may associate to $\f$ the following decomposition
\begin{equation*}
\f=
\begin{pmatrix}
  \f_{cc} & \f_{dc} \\
  \f_{cd} & \f_{dd}
\end{pmatrix},
\end{equation*}
where $\phi_{\bullet*}:V_\bullet\to V_*$ is the composition $\phi_{\bullet*}=p_*\circ\phi\circ\iota_\bullet$ for $\bullet, *\in\{c,d\}$.
Therefore, $\f_{\bullet*}$ is continuous being composition of continuous homomorphisms.

\begin{lemma}\label{lem:kerim} 
In the above notations, consider $\f_{cd}\colon V_c\to V_d$. Then 
$$\Im(\f_{cd})\in\B(V_d)\quad\text{and}\quad\ker(\f_{cd})\in\B(V_c)\subseteq \BV.$$
\end{lemma}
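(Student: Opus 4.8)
The plan is to handle the two assertions separately, in each case reducing to the structural facts about linearly compact spaces collected in Proposition~\ref{prop:lc properties}. The common starting point is that $\f_{cd}=p_d\circ\f\circ\iota_c$ is a continuous homomorphism out of the linearly compact space $V_c$, and that $V_d$ carries the discrete topology.

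For the image, I would first invoke Proposition~\ref{prop:lc properties}(c): since $V_c$ is linearly compact and $\f_{cd}\colon V_c\to V_d$ is continuous, $\Im(\f_{cd})$ is linearly compact. Being a linear subspace of the discrete space $V_d$, it is discrete in the induced topology, so Proposition~\ref{prop:lc properties}(d) forces it to be finite-dimensional. In $V_d$ every linear subspace is open, so $\Im(\f_{cd})$ is a linearly compact open subspace of $V_d$, that is, $\Im(\f_{cd})\in\B(V_d)$.

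For the kernel, I would exploit that $\{0\}$ is open in the discrete space $V_d$: by continuity of $\f_{cd}$, the subspace $\ker(\f_{cd})=\f_{cd}^{-1}(0)$ is open in $V_c$. An open linear subspace of a linearly topologized space is automatically closed, since its complement is a union of (open) cosets; hence $\ker(\f_{cd})$ is a closed linear subspace of the linearly compact space $V_c$ and is therefore linearly compact by Proposition~\ref{prop:lc properties}(b). Being open and linearly compact, it lies in $\B(V_c)$. The final inclusion $\B(V_c)\subseteq\BV$ is immediate: $V_c$ is an open subspace of $V$, so any linearly compact open subspace of $V_c$ is also a linearly compact open subspace of $V$.

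No step presents a genuine obstacle: each is a direct application of an item of Proposition~\ref{prop:lc properties}. The only thing requiring care is to keep the two topologies straight --- on $V_d$ the discrete topology makes ``linearly compact'' coincide with ``finite-dimensional'' and makes every subspace open, whereas on the linearly compact $V_c$ the open subspaces are exactly the linearly compact open ones.
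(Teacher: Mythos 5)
Your proof is correct. The image half is exactly the paper's argument: Proposition~\ref{prop:lc properties}(c) gives linear compactness of $\Im(\f_{cd})$, and discreteness of $V_d$ plus Proposition~\ref{prop:lc properties}(d) forces finite dimension. For the kernel, however, you take a genuinely different (and more direct) route. The paper first observes that $\ker(\f_{cd})$ is \emph{closed} in $V_c$, deduces linear compactness from Proposition~\ref{prop:lc properties}(b), and only then obtains openness indirectly: it uses the isomorphism $V_c/\ker(\f_{cd})\cong\Im(\f_{cd})$ to see that the quotient is a finite-dimensional, hence discrete, linearly topologized space, which is equivalent to $\ker(\f_{cd})$ being open. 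You instead get openness for free, as $\ker(\f_{cd})=\f_{cd}^{-1}(0)$ is the preimage of the open set $\{0\}$ of the discrete space $V_d$ under a continuous map, and then recover closedness from the standard fact that an open linear subspace is closed (its complement is a union of open cosets), after which Proposition~\ref{prop:lc properties}(b) gives linear compactness. Your version is shorter and avoids invoking the quotient isomorphism altogether; the paper's version has the mild side benefit of making explicit that the codimension of the kernel in $V_c$ equals the (finite) dimension of the image, but that extra information is not needed for the statement. Both arguments are sound, and your closing remark that $\B(V_c)\subseteq\BV$ because $V_c$ is open in $V$ is the right justification for the final inclusion, which the paper leaves implicit.
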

\begin{proof}
By Proposition~\ref{prop:lc properties}(c), $\Im(\f_{cd})$ is a linearly compact linear subspace of $V_d$. For $V_d$ is discrete, $\Im(\f_{cd})$ has finite dimension by Proposition~\ref{prop:lc properties}(d,e), so $\Im(\phi_{cd})\in\B(V_d)=\{F\leq V_d\mid \dim F<\infty\}$.

 As $\ker(\f_{cd})$ is a closed linear subspace of $V_c$, which is linearly compact, $\ker(\f_{cd})$ is linearly compact as well by Proposition~\ref{prop:lc properties}(b).  Thus $V_c/\ker(\f_{cd})\cong\Im(\f_{cd})$ is a finite dimesional linearly compact space, so $V/\ker(\f_{cd})$ is discrete by Proposition~\ref{prop:lc properties}(d,e). Consequently, $\ker(\f_{cd})$ is open in $V_c$, and so $\ker(\f_{cd})\in\B(V_c)$.
\end{proof}

We see now that in the above decomposition of $\phi$, the unique contribution to the topological entropy of $\phi$ comes from the ``linearly compact component'' $\phi_{cc}$.

\begin{proposition}\label{prop:rescc}
In the above notations, consider $\f_{cc}\colon V_c\to V_c$. Then $\ent^*(\f)=\ent^*(\f_{cc})$.
\end{proposition}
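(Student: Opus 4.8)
The plan is to show $\ent^*(\phi)=\ent^*(\phi_{cc})$ by proving the two inequalities separately, relying heavily on the Limit-free Formula (Proposition~\ref{prop:lff}) and its consequence Corollary~\ref{cor:ent*lc}. The decomposition $V=V_c\oplus V_d$ with $V_c\in\BV$ linearly compact and $V_d$ discrete is the key structural fact, together with Lemma~\ref{lem:kerim}, which tells us that $\phi_{cd}\colon V_c\to V_d$ has finite-dimensional image and open kernel. The intuition is that $\phi_{dd}$ and $\phi_{dc}$ act on (or into) the discrete part $V_d$, where the topological entropy must vanish (recall $\ent^*$ is zero on discrete spaces by Corollary~\ref{cor:ent*discrete}), and that the finite-dimensional ``leakage'' recorded by $\phi_{cd}$ cannot contribute to an asymptotic quantity.

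For the inequality $\ent^*(\phi_{cc})\le\ent^*(\phi)$, I would exhibit $V_c$ as a quotient or use Monotonicity. Observe that $V_d$ is in general \emph{not} $\phi$-invariant, but $V_c$ need not be $\phi$-invariant either; the natural candidate is to use Corollary~\ref{cor:ent*lc}. Given a linearly compact $M\le V_c$ with $M\le\phi_{cc}M$ and $\dim(\phi_{cc}M/M)<\infty$, I want to produce a comparable $M'\le V$ with $M'\le\phi M'$ realizing the same dimension jump. Here the cleanest route is to pass to the quotient $V/V_d'$ for a suitable closed $\phi$-invariant subspace, or to note that the projection $p_c\colon V\to V_c$ intertwines $\phi$ with $\phi_{cc}$ only modulo the off-diagonal terms. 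Because $\Im(\phi_{cd})$ is finite-dimensional, the difference between $\phi$ and its block-diagonal part is, in an appropriate sense, a finite-rank perturbation; I expect to control it using that finite-dimensional contributions do not affect the supremum of $\dim(\phi M/M)$.

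For the reverse inequality $\ent^*(\phi)\le\ent^*(\phi_{cc})$, I would again use Corollary~\ref{cor:ent*lc}: take any linearly compact $M\le V$ with $M\le\phi M$ and $\dim(\phi M/M)<\infty$, and compare $\dim(\phi M/M)$ with $\dim(\phi_{cc}M'/M')$ for $M'=p_c(M)\le V_c$ (which is linearly compact by Proposition~\ref{prop:lc properties}(c)). Writing $\phi=\phi_{cc}\oplus\phi_{dd}$ plus the off-diagonal maps $\phi_{cd},\phi_{dc}$, the subspace $M\cap V_d$ is discrete and $\phi$-related material landing in $V_d$ contributes zero entropy, while $\phi_{cd}$ contributes only the finite quantity $\dim\Im(\phi_{cd})$. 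The estimate $\dim(\phi M/M)\le\dim(\phi_{cc}M'/M')+\dim\Im(\phi_{cd})$ (or a similar bound with a fixed finite correction term independent of $M$) would then, after passing to the supremum and recalling that a bounded additive constant is irrelevant once one rescales via the Logarithmic law or iterates, give $\ent^*(\phi)\le\ent^*(\phi_{cc})$.

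The main obstacle I anticipate is the bookkeeping around the off-diagonal term $\phi_{cd}\colon V_c\to V_d$: since $V_c$ is not $\phi$-invariant, the block decomposition of $\phi$ does not descend to a clean decomposition of the trajectories or cotrajectories, and one must carefully verify that the finite-dimensional image of $\phi_{cd}$ genuinely produces only a bounded correction rather than accumulating across iterates. The saving grace is precisely Lemma~\ref{lem:kerim}: $\ker(\phi_{cd})$ is open in $V_c$ and belongs to $\B(V_c)$, so by Corollary~\ref{cor:coinitial} I may compute $\ent^*(\phi_{cc})$ using neighborhood bases contained in $\ker(\phi_{cd})$, on which the troublesome off-diagonal map vanishes. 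I would therefore restrict attention to $U\in\B(V)$ refining $\ker(\phi_{cd})$ from the outset, reducing to the case where the off-diagonal interaction into $V_d$ is trivial on the relevant subspaces, and then the two block-diagonal pieces separate cleanly with the discrete piece contributing nothing by Corollary~\ref{cor:ent*discrete}.
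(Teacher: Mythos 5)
Your closing paragraph is essentially the paper's own proof: by Lemma~\ref{lem:kerim}, $K=\ker(\f_{cd})\in\B(V_c)\subseteq\BV$, so Corollary~\ref{cor:coinitial} computes both $\ent^*(\f)$ and $\ent^*(\f_{cc})$ as suprema of $H^*(-,U)$ over $U\in\B(K)$, and for such $U$ one has $\f U=\f_{cc}U\leq V_c$, whence by induction the subspaces $U_n$ and $U_+$ of Definition~\ref{def:uplus} formed with respect to $\f$ and to $\f_{cc}$ coincide, so that Proposition~\ref{prop:lff} (or a direct comparison of cotrajectories) gives $H^*(\f,U)=H^*(\f_{cc},U)$. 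The only caveats are that this last step is a plain induction inside $V_c$ --- the discrete part never enters the computation at all, so the appeal to Corollary~\ref{cor:ent*discrete} and to a ``clean separation'' of block-diagonal pieces is superfluous --- and that the finite-rank-perturbation estimates of your first two paragraphs, whose bounded additive constant could not simply be rescaled away via the Logarithmic law (note $(\f^k)_{cc}\neq(\f_{cc})^k$ in general, and $\dim\Im((\f^k)_{cd})$ need not stay bounded in $k$), are rendered unnecessary by this reduction.
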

\begin{proof} 
By Lemma~\ref{lem:kerim}, $K=\ker(\f_{cd})\in\B(V_c)\subseteq\BV$. Thus, by Corollary~\ref{cor:coinitial},
\begin{eqnarray}
\ent^*(\f)&=&\sup\{H^*(\f,U)\mid U\in\B(K)\},\nonumber\\
\ent^*(\f_{cc})&=&\sup\{H^*(\f_{cc},U)\mid U\in\B(K)\}.\nonumber
\end{eqnarray}
For $U\in\B(K)$, as in Definition~\ref{def:uplus}, let 
\begin{eqnarray}
U_0=U\quad&\text{and}&\quad U^{cc}_0=U,\nonumber\\
U_{n+1}=U\cap\f U_{n}\quad&\text{and}&\quad U^{cc}_{n+1}=U\cap\f_{cc} U^{cc}_{n},\quad \text{for every}\ n\in\N,\nonumber\\ 
U_+=\bigcap_{n\in\N} U_n\quad&\text{and}&\quad U^{cc}_+=\bigcap_{n\in\N} U^{cc}_n.\nonumber
\end{eqnarray}
Proposition~\ref{prop:lff} implies that
\begin{eqnarray}
\ent^*(\f)&=&\sup\left\{\dim\frac{\f U_+}{U_+}\mid U\in\B(K)\right\},\nonumber\\
\ent^*(\f_{cc})&=&\sup\left\{\dim\frac{\f_{cc}U^{cc}_+}{U^{cc}_+}\mid U\in\B(K)\right\}.\nonumber
\end{eqnarray}
Since $U\leq K=\ker(\f_{cd})\leq V_c$, we deduce that $\phi U=\phi_{cc} U\leq V_c$. Then it is possible to prove by induction that $U_n=U^{cc}_n$ for all $n\in\N$; therefore, $U_+= U^{cc}_+$ and $\f U_+= \f_{cc}U^{cc}_+$.
Hence, $\ent^*(\phi)=\ent^*(\phi_{cc})$.
\end{proof}

\subsection{Reduction to topological automorphisms}\label{s:redauto}

For a continuous endomorphism $\phi:V\to V$ of a linearly compact vector space $V$, the \emph{surjective core} of $\f$ is
\begin{equation*}
S_{\f}=\bigcap_{n\in\N}\f^nV.
\end{equation*}
\begin{lemma}\label{fact:surjcore}
Let $V$ be a linearly compact vector space and $\phi:V\to V$ a continuous endomorphism. Then:
\begin{enumerate}
\item[(a)] $S_{\f}$ is a closed linear subspace of $V$;
\item[(b)] $\f(S_{\f})=S_{\f}$;
\item[(c)] $U_+\leq S_\f$ for all $U\in\BV$.
\end{enumerate}
\end{lemma}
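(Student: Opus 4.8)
The plan is to prove the three claims about the surjective core $S_\f=\bigcap_{n\in\N}\f^n V$ in order, exploiting that $V$ is linearly compact and hence the full machinery of Proposition~\ref{prop:lc properties} and Theorem~\ref{thm:warner} applies. For part (a), the key observation is that each iterated image $\f^n V$ is a closed linear subspace: since $V$ is linearly compact and $\f$ is continuous, $\f V$ is linearly compact by Proposition~\ref{prop:lc properties}(c), hence closed by Proposition~\ref{prop:lc properties}(a); iterating, every $\f^n V$ is linearly compact and closed. An arbitrary intersection of closed linear subspaces is closed, so $S_\f$ is closed (and in fact linearly compact, being a closed subspace of the linearly compact $V$, by Proposition~\ref{prop:lc properties}(b)).

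For part (b), the inclusion $\f(S_\f)\subseteq S_\f$ is immediate since $\f(S_\f)\subseteq\f(\f^n V)=\f^{n+1}V$ for every $n$. The nontrivial direction is $S_\f\subseteq\f(S_\f)$, i.e.\ surjectivity of $\f$ restricted to its surjective core. Here I would apply Theorem~\ref{thm:warner}(a): the family $\ca N=\{\f^n V\mid n\in\N\}$ is a linear filter base (it is a decreasing chain, so any two members contain a third), all its members are already closed, and applying the continuous homomorphism $\f$ gives
$$\f\Big(\bigcap_{n\in\N}\f^n V\Big)=\bigcap_{n\in\N}\f(\f^n V)=\bigcap_{n\in\N}\f^{n+1}V=\bigcap_{n\in\N_+}\f^n V=S_\f,$$
where the last equality holds because dropping the $n=0$ term $\f^0 V=V$ from the intersection does not change it. This yields $\f(S_\f)=S_\f$ directly.

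For part (c), fix $U\in\BV$ and recall from Lemma~\ref{prop:Uplus}(a) that $U_+$ is the largest linear subspace of $U$ satisfying $U_+\leq\f U_+$. I would show $U_+\leq\f^n V$ for every $n$, whence $U_+\leq S_\f$. Indeed $U_+\leq\f U_+\leq\f V$, and then inductively $U_+\leq\f U_+\leq\f(\f^{n-1}V)=\f^n V$ using $U_+\leq\f^{n-1}V$; so $U_+\leq\bigcap_{n\in\N}\f^n V=S_\f$.

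The steps are all short, so there is no single serious obstacle; the one point requiring care is the surjectivity in part (b), where one must resist the temptation to argue set-theoretically. In a general (non-compact) setting $\f(\bigcap_n\f^n V)$ can be strictly smaller than $\bigcap_n\f(\f^n V)$, and the equality is exactly what linear compactness buys us through Theorem~\ref{thm:warner}(a). Verifying that $\{\f^n V\}$ is a genuine linear filter base of \emph{closed} subspaces (needed to invoke that theorem) is the hypothesis one should not skip.
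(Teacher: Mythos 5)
Your proposal is correct and follows essentially the same route as the paper, which proves (a) via Proposition~\ref{prop:lc properties}(a,c), deduces (b) from Theorem~\ref{thm:warner}(a) applied to the decreasing chain $\{\f^n V\}_{n\in\N}$ of closed subspaces, and obtains (c) from the definition of $U_+$ (your use of Lemma~\ref{prop:Uplus}(a) plus induction is just a mild repackaging of the same observation that $U_n\leq \f^n V$ for all $n$). Your added care in checking that the members $\f^n V$ and their images are closed, so that the closures in Theorem~\ref{thm:warner}(a) can be dropped, is exactly the point the paper leaves implicit.
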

\begin{proof} 
(a) is an easy consequence of Proposition~\ref{prop:lc properties}(a,c), while Theorem~\ref{thm:warner}(a) implies (b), and item (c) follows by the definition of $U_+$.
\end{proof}

Thus, $S_{\f}$ is a closed $\f$-stable linear subspace of $V$, and so $\f\restriction_{S_{\f}}\colon S_{\f}\to S_{\f}$ is surjective. The following result shows that $\ent^*(\f)=\ent^*(\f\restriction_{S_\f})$. Moreover, by definition $S_{\f}$ turns to be the largest closed $\phi$-stable linear subspace of $V$.

\begin{proposition}\label{thm:redepi}
Let $V$ be a linearly compact vector space and $\f:V\to V$ a continuous endomorphism. Then
$$\ent^*(\f)=\ent^*(\f\restriction_{S_{\f}}).$$
\end{proposition}
\begin{proof}
By Proposition~\ref{prop:basic prop}(b), $\ent^*(\f)\geq\ent^*(\f\restriction_{S_{\f}})$. To prove the converse inequality, let $U\in\BV$. By Lemma~\ref{prop:Uplus}(c) and Lemma~\ref{fact:surjcore}(c), $U_+$ is linearly compact, $U_+\leq\f U_+\leq S_\f$ and $\f U_+/U_+$ has finite dimensione. Thus,
\begin{equation}\label{eq:redepi}
H^*(\f,U)=\dim\frac{\f U_+}{U_+}\leq \ent^*(\f\restriction_{S_{\f}}),
\end{equation}
by Proposition~\ref{prop:lff} and Corollary~\ref{cor:ent*lc}. Consequently, $\ent^*(\f)\leq\ent^*(\f\restriction_{S_{\f}}).$
\end{proof}

Let $V$ be a linearly compact vector space and $\phi:V\to V$ a continuous endomorphism. Let $\caL V$ denote the inverse limit $\varprojlim(V_n,\f)$ of the inverse system $(V_n,\f)_{n\in\N}$, where $V_n=V$ for all $n\in\N$:
\begin{equation}\label{eq:invsys}
\cdots\stackrel{\f}{\longrightarrow}V_n\stackrel{\f}{\longrightarrow}V_{n-1}\stackrel{\f}{\longrightarrow}\cdots\stackrel{\f}{\longrightarrow}V_1\stackrel{\f}{\longrightarrow} V_0.
\end{equation} 
In other words,
\begin{equation}
\caL V=\left\{(x_n)_{n\in\N}\in\prod_{n\in\N}V_n\mid x_n=\phi(x_{n+1})\ \forall n\in\N\right\},
\end{equation}
endowed with the topology inherited from the product topology of $\prod_{n\in\N}V_n$. Since $\caL V$ is a closed linear subspace of the direct product (see \cite[Lemma~1.1.2]{profinite}), $\caL V$ is linearly compact as well by Proposition~\ref{prop:lc properties}(c). Let $\iota:\mathcal LV\to \prod_{n\in\N}V_n$ be the canonical embedding.


The natural continuous endomorphism 
$$\prod\f\colon\prod_{n\in\N}V_n\to\prod_{n\in\N}V_n,\quad (x_n)_n\mapsto (\f(x_n))_n$$ induces a continuous endomorphism $\caL\phi:\caL V\to\caL V$ making the following diagram commute
\begin{equation}\label{eq:phi}
\xymatrix{\prod_{n\in\N}V_n\ar[r]^{\prod\f}&\prod_{n\in\N}V_n\\
\caL V\ar[u]^\iota\ar[r]_{\caL\phi}&\caL V.\ar[u]_\iota}
\end{equation}

\begin{proposition}\label{prop:auto}
Let $V$ be a linearly compact vector space and $\phi:V\to V$ a continuous endomorphism. Then $\caL\phi\colon\caL V\to\caL V$ is a topological automorphism.
\end{proposition}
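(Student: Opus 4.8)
The plan is to exploit the open mapping property recorded in Proposition~\ref{prop:omt}. Since $\caL V$ is linearly compact and $\caL\phi$ is already known to be a continuous endomorphism (as noted just above the statement), it suffices to prove that $\caL\phi$ is bijective: a continuous bijective homomorphism out of a linearly compact space is automatically a topological isomorphism by Proposition~\ref{prop:omt}, hence a topological automorphism. Thus the whole argument reduces to checking injectivity and surjectivity, and continuity of the inverse need not be verified by hand on the inverse-limit topology.

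For injectivity I would take $(x_n)_{n\in\N}$ in the kernel of $\caL\phi$, so that $\phi(x_n)=0$ for every $n\in\N$. The defining relation of $\caL V$ gives $x_n=\phi(x_{n+1})$, and since $\phi(x_{n+1})=0$ this forces $x_n=0$ for every $n$. Hence $\ker\caL\phi=0$. For surjectivity I would exhibit an explicit preimage, which also makes the inverse transparent. Given $(y_n)_{n\in\N}\in\caL V$, set $x_n=y_{n+1}$. The sequence $(x_n)_n$ again lies in $\caL V$, because the compatibility $x_n=\phi(x_{n+1})$ is exactly the relation $y_{n+1}=\phi(y_{n+2})$, which holds for $(y_n)_n$; moreover $\phi(x_n)=\phi(y_{n+1})=y_n$, so $\caL\phi((x_n)_n)=(y_n)_n$. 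In fact this shows that $(\caL\phi)^{-1}$ is the shift $(y_n)_n\mapsto(y_{n+1})_n$.

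Combining the two steps, $\caL\phi$ is a continuous bijective homomorphism of the linearly compact space $\caL V$, so Proposition~\ref{prop:omt} yields that it is a topological isomorphism, i.e.\ a topological automorphism. The only point requiring any care is the bookkeeping with indices in the two compatibility relations; everything else is forced, and the real leverage comes from invoking Proposition~\ref{prop:omt} so that continuity of the inverse is obtained for free rather than checked directly.
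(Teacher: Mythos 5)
Your proof is correct and follows essentially the same route as the paper: injectivity is forced by the compatibility relation $x_n=\phi(x_{n+1})$, surjectivity is witnessed by the shift $(y_n)_n\mapsto(y_{n+1})_n$, and Proposition~\ref{prop:omt} upgrades the continuous bijection to a topological isomorphism. The only difference is cosmetic — the paper records injectivity as holding ``by construction'' rather than spelling out the kernel computation as you do.
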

\begin{proof}
By construction, $\caL\phi$ is continuous and injective. Since $\caL V$ is linearly compact, it is sufficient to prove that $\caL\phi$ is surjective by Proposition~\ref{prop:omt}. Let $x=(x_n)_n\in\mathcal LV$, that is, $\phi(x_{n+1})=x_n$ for every $n\in\N$. Clearly, $x=\mathcal L\phi((x_{n+1})_n)$. 
\end{proof}

The next part of this section is devoted to prove that $\ent^*(\phi)=\ent^*(\caL\phi)$. To this end, for every $n\in\N$, let $p_n\colon \caL V\to V_n$ be the canonical projection given by the usual restriction
\begin{equation}
\xymatrix{\caL V\ar@{^{(}->}[d]_{\iota}\ar[r]^{p_n}&V_n\\
\prod_{n\in\N}V_n.\ar@{->>}[ur]&}
\end{equation}
 Let also $K_n=\ker p_n$, which is a closed $\caL\phi$-invariant linear subspace of $\caL V$.

\begin{lemma}\label{lem:barV ent}
Let $V$ be a linearly compact vector space, $\phi:V\to V$ a continuous endomorphism and $n\in\N$. Then, in the above notations:
\begin{itemize} 
\item[(a)] $p_n(\caL V)=S_\phi$;
\item[(b)] for $\alpha\colon\caL V/K_n\to p_n(\caL V)$ the topological isomorphism induced by $p_n$ and $\overline{\caL\phi}_n\colon \caL V/K_n\to \caL V/K_n$ the continuous endomorphism induced by $\caL\phi$, 
$$\overline{\caL\phi}_n=\alpha^{-1}\circ\phi\restriction_{S_\phi}\circ\alpha;$$
\item[(c)] $\ent^*(\overline{\caL\phi}_n)=\ent^*(\phi)$.
\end{itemize}
\end{lemma}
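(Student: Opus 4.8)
The plan is to treat the three items in order, with (a) carrying the real content and (b), (c) following by a direct computation together with results already in place.

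For (a), I would first prove $p_n(\caL V)\subseteq S_\f$. The projections satisfy $p_n=\f\circ p_{n+1}$ (reading off the defining relation $x_n=\f(x_{n+1})$ of a thread), hence by iteration $p_n=\f^k\circ p_{n+k}$ for every $k\in\N$; thus for any thread $x=(x_m)_m\in\caL V$ one has $x_n=\f^k(x_{n+k})\in\f^kV$ for all $k$, so $x_n\in\bigcap_{k\in\N}\f^kV=S_\f$. For the reverse inclusion $S_\f\subseteq p_n(\caL V)$, I would take $y\in S_\f$ and build a thread with $x_n=y$: set $x_m=\f^{n-m}(y)$ for $m\le n$, and for $m>n$ choose preimages recursively, each lying in $S_\f$. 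This is possible because $\f\restriction_{S_\f}\colon S_\f\to S_\f$ is surjective by Lemma~\ref{fact:surjcore}(b), so every element of $S_\f$ has an $\f$-preimage again in $S_\f$. The resulting sequence lies in $\caL V$ and projects onto $y$ under $p_n$.

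For (b), I would first make $\caL\phi$ explicit. From the commuting square~\eqref{eq:phi}, $\iota\circ\caL\phi=\prod\f\circ\iota$, so $\caL\phi((x_m)_m)=(\f(x_m))_m$ (this is exactly the action used in the proof of Proposition~\ref{prop:auto}). The map $\alpha$ sends $x+K_n$ to $p_n(x)=x_n$; it is a well-defined continuous bijection onto $p_n(\caL V)=S_\f$, and since $\caL V$ is linearly compact and $S_\f$ is closed (hence linearly compact), Proposition~\ref{prop:omt} upgrades it to a topological isomorphism. Then the identity is a one-line verification: for $x=(x_m)_m$,
\[
\alpha\bigl(\overline{\caL\phi}_n(x+K_n)\bigr)=p_n(\caL\phi(x))=\f(x_n)=\f\restriction_{S_\f}\!\bigl(\alpha(x+K_n)\bigr),
\]
so $\alpha\circ\overline{\caL\phi}_n=\f\restriction_{S_\f}\circ\,\alpha$, which rearranges to the claimed $\overline{\caL\phi}_n=\alpha^{-1}\circ\f\restriction_{S_\f}\circ\,\alpha$.

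For (c), I would combine (b) with the fundamental properties. By (b) the endomorphism $\overline{\caL\phi}_n$ is conjugate, through the topological isomorphism $\alpha$, to $\f\restriction_{S_\f}$, so invariance under conjugation (Proposition~\ref{prop:basic prop}(a)) gives $\ent^*(\overline{\caL\phi}_n)=\ent^*(\f\restriction_{S_\f})$; and $\ent^*(\f\restriction_{S_\f})=\ent^*(\f)$ is precisely Proposition~\ref{thm:redepi}. The main obstacle is the reverse inclusion in (a): the backward construction of the thread must stay inside $S_\f$, and this relies essentially on the surjectivity of $\f\restriction_{S_\f}$ to furnish a compatible (dependently chosen) sequence of preimages; the only other non-formal point is that $\alpha$ is a \emph{topological} isomorphism, which is handled by the open-mapping property of Proposition~\ref{prop:omt}.
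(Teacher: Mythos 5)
Your proposal is correct and follows essentially the same route as the paper: both inclusions in (a) are proved exactly as in the paper (the forward one by iterating the thread relation, the reverse one by building a thread inside $S_\f$ via the surjectivity of $\f\restriction_{S_\f}$ from Lemma~\ref{fact:surjcore}(b)), (b) is the same conjugation computation with $\alpha$ upgraded to a topological isomorphism by Proposition~\ref{prop:omt}, and (c) invokes Proposition~\ref{prop:basic prop}(a) and Proposition~\ref{thm:redepi} just as the paper does. No gaps.
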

\begin{proof}
(a) Let $x=(x_n)_n\in\caL V$; then $\f^k(x_{n+k})=x_n$ for every $k\in\N$, and so $p_n(x)=x_n\in S_\f$.

Conversely, let $s\in S_\f$. By Lemma~\ref{fact:surjcore}(b), we can set
\begin{itemize}
\item[-] $x_i=\f^{n-i}(s)\in S_\f$ for $i\in\{0,\ldots,n\}$;
\item[-] $x_{i}\in S_\f$ such that $\f(x_{i})=x_{i-1}$ for $i>n$.
\end{itemize}
Thus,  $x=(x_i)_i\in\caL V$ and $p_n(x)=s$, as required.

(b) By construction, $K_n$ is a closed $\caL\f$-invariant linear subspace of $\caL V$ for every $n\in \N$. Moreover, $\alpha$ is a topological isomorphism by Proposition~\ref{prop:omt}. Let $x+K_n\in\caL V/K_n$. By (a),
\begin{eqnarray}\nonumber
\alpha^{-1}(\f\restriction_{S_\f}(\alpha(x+K_n)))&=&\alpha^{-1}(\f\restriction_{S_\f}(p_n(x)))=\alpha^{-1}(\f(p_n(x)))=\\ \nonumber
&=&\alpha^{-1}(p_n(\caL\phi(x)))=\caL\phi(x)+K_n=\overline{\caL\phi}_n(x+K_n).
\end{eqnarray}

(c) Since $\alpha$ is a topological isomorphism, (c) is an easy consequence of (b) by Proposition~\ref{prop:basic prop}(a) and Proposition~\ref{thm:redepi}.
\end{proof}

The next result shows that for the computation of the topological entropy in the case of linearly compact vector spaces one can reduce to topological automorphisms.

\begin{theorem}\label{thm:redauto}
Let $V$ be a linearly compact vector space and $\f\colon V\to V$ a continuous endomorphism. 
Then $$\ent^*(\caL\phi)=\ent^*(\phi).$$
\end{theorem}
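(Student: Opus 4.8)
The plan is to obtain the equality as a direct application of the Continuity on inverse limits property (Proposition~\ref{prop:basic prop}(e)) to the linearly compact vector space $\caL V$, the topological automorphism $\caL\phi$, and the family of closed $\caL\phi$-invariant linear subspaces $\{K_n\mid n\in\N\}$. Indeed, once I know that $\{K_n\}$ satisfies the hypotheses of that proposition, it yields $\ent^*(\caL\phi)=\sup_{n\in\N}\ent^*(\overline{\caL\phi}_n)$, and since Lemma~\ref{lem:barV ent}(c) already establishes $\ent^*(\overline{\caL\phi}_n)=\ent^*(\phi)$ for every $n\in\N$, the supremum collapses to $\ent^*(\phi)$, which is exactly the claim.

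What remains, then, is to check the two hypotheses of Proposition~\ref{prop:basic prop}(e). First, the family $\{K_n\}$ must be directed under inverse inclusion. Recall $K_n=\ker p_n=\{x=(x_m)_m\in\caL V\mid x_n=0\}$; since $x_n=\phi(x_{n+1})$, the vanishing of $x_{n+1}$ forces $x_n=0$, whence $K_{n+1}\leq K_n$. Thus $\{K_n\}$ is a decreasing chain, and a chain is automatically directed under inverse inclusion (given $m,n$, take $\max\{m,n\}$). Together with the already recorded fact that each $K_n$ is a closed $\caL\phi$-invariant linear subspace of $\caL V$, the only nontrivial point left is the second hypothesis, namely the topological identification $\caL V=\varprojlim_{n}\caL V/K_n$.

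The hard part is establishing this identification, and I expect it to be the main obstacle. I would consider the canonical continuous homomorphism $\Psi\colon\caL V\to\varprojlim_n \caL V/K_n$, $x\mapsto (x+K_n)_n$. Its kernel is $\bigcap_{n\in\N}K_n=\{x\mid x_n=0\ \forall n\}=0$, so $\Psi$ is injective. For surjectivity I would unwind the inverse limit: a compatible sequence is a family $(y^{(n)}+K_n)_n$ with $y^{(n+1)}-y^{(n)}\in K_n$, i.e.\ the $n$-th coordinates of $y^{(n+1)}$ and $y^{(n)}$ coincide; setting $x_n$ to be this common $n$-th coordinate yields a point $x=(x_n)_n$ which lies in $\caL V$ (the defining relations $\phi(x_{n+1})=x_n$ follow from $y^{(n+1)}\in\caL V$ together with the compatibility) and satisfies $\Psi(x)=(y^{(n)}+K_n)_n$. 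Hence $\Psi$ is a continuous bijective homomorphism. Finally, since each $\caL V/K_n$ is linearly compact by Proposition~\ref{prop:lc properties}(e) and an inverse limit of linearly compact vector spaces is linearly compact by Proposition~\ref{prop:lc properties}(g), the target is linearly compact; as $\caL V$ is linearly compact too, Proposition~\ref{prop:omt} upgrades $\Psi$ to a topological isomorphism.

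Combining the two verified hypotheses, Proposition~\ref{prop:basic prop}(e) applies and delivers $\ent^*(\caL\phi)=\sup_{n\in\N}\ent^*(\overline{\caL\phi}_n)$, which equals $\ent^*(\phi)$ by Lemma~\ref{lem:barV ent}(c).
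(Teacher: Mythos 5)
Your proposal is correct and takes essentially the same route as the paper: both establish that the canonical map $\caL V\to\varprojlim \caL V/K_n$ is a topological isomorphism (injectivity from $\bigcap_{n\in\N}K_n=0$, openness from Proposition~\ref{prop:omt}) and then conclude via Proposition~\ref{prop:basic prop}(e) together with Lemma~\ref{lem:barV ent}(c). The only divergence is in the surjectivity step, where you give an explicit construction of a preimage while the paper deduces it from the linear compactness of the image via \cite[Lemma~1.1.7]{profinite}; both arguments are sound.
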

\begin{proof} 
Since $\bigcap_{n\in\N}K_n=0$, we have that the canonical map $\rho\colon \caL V\to\varprojlim\caL V/K_n$ is an injective continuous homomorphism of linearly compact spaces. Since $\rho(\caL V)$ is linearly compact, then $\rho$ is also surjective (see \cite[Lemma~1.1.7]{profinite}). By Proposition~\ref{prop:omt}, we conclude that
\begin{equation}\label{eq:invlim1}
\caL V\cong_{top} \varprojlim \caL V/K_n.
\end{equation}
By the invariance under conjugation, the latter identification preserves the topological entropy.
Now Proposition~\ref{prop:basic prop}(a,e) and Lemma~\ref{lem:barV ent}(c) give $\ent^*(\mathcal L\phi)=\sup_{n\in\N}\ent^*(\overline{\mathcal L\phi}_n)=\ent^*(\phi)$.
\end{proof}

A {\it flow} in the category $\LC$ is a pair $(V, \f)$, where $V$ is a linearly compact vector space and $\f \colon V\to V$ is a continuous endomorphism. If $(V, \f )$ and $(W, \psi)$ are flows in $\LC$, then a morphism of flows from $(V, \f )$ to $(W, \psi)$ is a continuous homomorphism $h\colon V\to W$ such that $h \circ\f = \psi\circ h$. We let $\mathrm{Flow}(\LC)$ denote the resulting category of flows in $\LC$. Clearly, it is well-defined a functor
\begin{equation}\label{functorL}
\caL\colon\mathrm{Flow}(\LC)\to\mathrm{Flow}(\LC)
\end{equation}
 given by $\caL(V,\f)=(\caL V,\caL \f)$ and $\caL(h)\colon \caL V\to \caL W$ is the continuous homomorphism induced by the following morphism of inverse systems
$$\xymatrix{
\cdots\ar[r]&V_n\ar[r]^\f\ar[d]^h&\cdots\ar[r]&V_1\ar[r]^\f\ar[d]^h&V_0\ar[d]^h\\ 
\cdots\ar[r]&W_n\ar[r]^\psi&\cdots\ar[r]&W_1\ar[r]^\psi&W_0,}$$ 
namely $\caL h((v_n)_n)=(h(v_n))_n\in\caL W$ for every $(v_n)_n\in\caL V.$

\smallskip
We conclude this section by showing that the functor $\caL$ preserves the topological extensions in the sense of  Proposition~\ref{prop: L exact}. Let $\f\colon V\to V$ be a continuous endomorphism of a linearly compact space $V$. For a closed $\f$-invariant linear subspace $W$ of $V$, consider the diagram
\begin{equation}
\xymatrix{
0\ar[r]&W\ar[r]\ar[d]^{\f\restriction_W}&V\ar[r]\ar[d]^{\f}&V/W\ar[r]\ar[d]^{\overline\f}&0\\
0\ar[r]&W\ar[r]&V\ar[r]&V/W.\ar[r]&0
}
\end{equation}
Thus, one constructs as above the following exact sequence of inverse systems of linearly compact spaces (see \eqref{eq:invsys})
\begin{equation}
\xymatrix{0\ar[r]&(W_n,\f\restriction_W)_{n\in\N}\ar[r]&(V_n,\f)_{n\in\N}\ar[r]&(V_n/W_n,\overline\f)_{n\in\N},\ar[r]&0}
\end{equation}
where $V=V_n$ and $W=W_n$ for every $n\in\N$.
Denote by
$$\caL W=\varprojlim(W_n,\phi\restriction_W),\quad \caL V=\varprojlim(V_n,\phi)\quad\text{and}\quad\caL (V/W)=\varprojlim(V_n/W_n,\overline{\f})$$
the corresponding inverse limits. Since the inverse limit functor in $\LC$ is exact (see Remark~\ref{rem:ab5}) we have the short exact sequence in $\LC$ (see \cite[pages 4-5]{profinite})
\begin{equation}
\xymatrix{0\ar[r]&\caL W\ar[r]&\caL V\ar[r]&\caL (V/W)\ar[r]&0.}
\end{equation}
In order to simplify the notation, we regard $\caL W$ as a closed linear subspace of $\caL V$ and, since $\caL(V/W)\cong_{top} \caL V/\caL W$, we identify $\caL (V/W)$ with the $\caL V/\caL W$. Since the topological entropy is invariant under conjugation by Proposition~\ref{prop:basic prop}(a), it is possible to easily verify the latter identification preserves the topological entropy. 

The linear subspace $\caL W$ turns out to be closed and $\caL\f$-invariant in $\mathcal LV$, so the following diagram 
\begin{equation}\label{eq:diag11}
\xymatrix{
0\ar[r]&\caL W\ar[r]\ar[d]^{(\caL\phi)\restriction_{\caL W}}&\caL V\ar[r]\ar[d]^{\caL \phi}&\caL(V/W)\ar[r]\ar[d]^{\overline{\caL\phi}}&0\\
0\ar[r]&\caL W\ar[r]&\caL V\ar[r]&\caL(V/W)\ar[r]&0
}
\end{equation}
commutes, where $\overline{\caL\phi}\colon \caL(V/W)\to\caL(V/W)$ is the continuous endomorphism induced by $\caL \phi$.

On the other hand, by restriction in view of Equation~\eqref{eq:phi} we have the commutative diagrams
\begin{equation}\label{eq:phi2}
\xymatrix{\prod_{n\in\N}W_n\ar[r]^{\prod\f\restriction_W}&\prod_{n\in\N}W_n\\
\caL W\ar[u]^{\iota'}\ar[r]_{\caL(\f\restriction_W)}&\caL W\ar[u]_{\iota'},}
\quad
\xymatrix{\prod_{n\in\N}(V_n/W_n)\ar[r]^{\prod\overline\f}&\prod_{n\in\N}(V_n/W_n)\\
\caL (V/W)\ar[u]^{\iota''}\ar[r]_{\caL\overline\phi}&\caL (V/W)\ar[u]_{\iota''},}
\end{equation}
where $\caL(\f\restriction_W)$ and $\caL\overline\f$ are both topological automorphisms by Proposition~\ref{prop:auto}.


\smallskip
We see now  how the functor $\mathcal L$ behaves under taking closed invariant linear subspaces and quotient vector spaces.

\begin{proposition}\label{prop: L exact} 
Let $V$ be a linearly compact vector space, $\f:V\to V$ a continuous endomorphism and $W$ a closed $\f$-invariant linear subspace of $V$. Then
$$(\caL\f)\restriction_{\caL W}=\caL(\f\restriction_W)\quad\text{and}\quad\overline{\caL\f}=\caL\overline\f,$$
where $\overline\phi:V/W\to V/W$ is the continuous endomorphism induced by $\phi$ and $\overline{\mathcal L\phi}:\mathcal LV/\mathcal LW\to \mathcal LV/\mathcal LW$ is the continuous endomorphism induced by $\mathcal L\phi$. In particular, $(\caL\f)\restriction_{\caL W}$ and $\overline{\caL\f}$ are topological automorphisms.

Moreover, $\ent^*(\f\restriction_W)=\ent^*((\caL\f)\restriction_{\caL W})$ and $\ent^*(\overline\f)=\ent^*(\overline{\caL\f})$.
\end{proposition}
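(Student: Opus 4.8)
The plan is to read both identities off the functoriality of the functor $\caL$ (see \eqref{functorL}) applied to the canonical morphisms of flows attached to $W$, and then to transfer the entropy values through Theorem~\ref{thm:redauto}.

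First I would treat the restriction. The inclusion $\iota_W\colon W\to V$ is a morphism of flows $(W,\f\restriction_W)\to(V,\f)$ precisely because $W$ is $\f$-invariant, so $\caL\iota_W\colon\caL W\to\caL V$ is exactly the embedding through which we regard $\caL W$ as a closed linear subspace of $\caL V$, with $\caL\iota_W((w_n)_n)=(w_n)_n$. Since $\caL$ is a functor, $\caL\iota_W$ is a morphism of flows $(\caL W,\caL(\f\restriction_W))\to(\caL V,\caL\f)$, that is, $\caL\f\circ\caL\iota_W=\caL\iota_W\circ\caL(\f\restriction_W)$; under the identification this says precisely $(\caL\f)\restriction_{\caL W}=\caL(\f\restriction_W)$. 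Concretely, for $(w_n)_n\in\caL W$ one has $\caL\f((w_n)_n)=(\f(w_n))_n=((\f\restriction_W)(w_n))_n$ by \eqref{eq:phi}, which is the same computation made explicit.

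For the quotient I would use that, by exactness of the inverse limit functor on $\LC$ (Remark~\ref{rem:ab5}), the canonical projection $\pi\colon V\to V/W$ yields a surjection $\caL\pi\colon\caL V\to\caL(V/W)$ with kernel $\caL W$, giving the identification $\caL(V/W)\cong_{top}\caL V/\caL W$. Now $\caL\pi$ is a morphism of flows $(\caL V,\caL\f)\to(\caL(V/W),\caL\overline\f)$, hence $\caL\overline\f\circ\caL\pi=\caL\pi\circ\caL\f$; since $\overline{\caL\f}$ is the unique endomorphism of $\caL V/\caL W\cong\caL(V/W)$ satisfying this intertwining property, we obtain $\overline{\caL\f}=\caL\overline\f$. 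That $(\caL\f)\restriction_{\caL W}$ and $\overline{\caL\f}$ are topological automorphisms is then immediate, because $\caL(\f\restriction_W)$ and $\caL\overline\f$ are automorphisms by Proposition~\ref{prop:auto} (as already recorded in \eqref{eq:phi2}).

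Finally, the entropy equalities follow by applying Theorem~\ref{thm:redauto} to the two flows $(W,\f\restriction_W)$ and $(V/W,\overline\f)$ in $\LC$, both spaces being linearly compact by Proposition~\ref{prop:lc properties}(b,e): indeed $\ent^*(\caL\psi)=\ent^*(\psi)$ for every continuous endomorphism $\psi$ of a linearly compact space, so using $(\caL\f)\restriction_{\caL W}=\caL(\f\restriction_W)$ we get $\ent^*((\caL\f)\restriction_{\caL W})=\ent^*(\f\restriction_W)$, while $\overline{\caL\f}$ and $\caL\overline\f$ are conjugate through the topological isomorphism $\caL(V/W)\cong_{top}\caL V/\caL W$ and hence have equal entropy by Proposition~\ref{prop:basic prop}(a), giving $\ent^*(\overline{\caL\f})=\ent^*(\caL\overline\f)=\ent^*(\overline\f)$. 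I expect no serious obstacle; the only point that genuinely demands care is verifying that the identification $\caL(V/W)\cong\caL V/\caL W$ is compatible with both endomorphisms, which is exactly where the exactness of the inverse limit functor and the uniqueness of the induced map are used.
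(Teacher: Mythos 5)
Your proof is correct and takes essentially the same route as the paper's: the functoriality of $\caL$ applied to the inclusion $W\to V$ and the projection $V\to V/W$ unwinds to exactly the element-wise computations the paper writes out (recall $\caL h((v_n)_n)=(h(v_n))_n$), both arguments rest on the same identification $\caL(V/W)\cong_{top}\caL V/\caL W$ obtained from exactness of the inverse limit functor, and both conclude via Proposition~\ref{prop:auto} for the automorphism claim and Theorem~\ref{thm:redauto} (with conjugation-invariance, Proposition~\ref{prop:basic prop}(a)) for the entropy equalities. The only presentational difference is that you make explicit the uniqueness of the endomorphism induced on the quotient, which the paper leaves implicit.
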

\begin{proof}
Let $(x_n)_n\in\mathcal LW$, that is, $\f\restriction_W(x_{n+1})=x_n$ for all $n\in\N$. As $W\leq V$ and $\caL W\leq\caL V$,
$$\caL(\f\restriction_W)((x_n)_n)=\left(\f\restriction_W(x_n)\right)_n=(\f(x_n))_n=(\caL \f)\restriction_{\caL W}((x_n)_n).$$
Now let $(x_n+W)_{n}\in\caL(V/W).$ As $\caL (V/W)\cong\caL V/\caL W$, 
$$\caL\overline\f((x_n+W)_n)=\left(\f(x_n)+W\right)_n\cong (\f(x_n))_n+\caL W=\caL\f((x_n)_n)+\caL W=\overline{\caL\f}((x_n)_n+\caL W).$$

By Proposition~\ref{prop:auto}, it follows that $(\caL\phi)\restriction_{\caL W}$ and $\overline{\caL\f}$ are a topological automorphisms. Clearly, one deduces that $\caL W$ is a $\caL\f$-stable linear subspace of $\caL V$.

The last assertion follows from Theorem~\ref{thm:redauto}.
\end{proof}

\section{Addition Theorem}\label{s:AT}

This section is devoted to prove the Addition Theorem for the topological entropy (see Theorem~\ref{AT-intro}). 
We start by proving it for topological automorphisms.

\begin{proposition}\label{thm:ATauto}
Let $V$ be an l.l.c.\! vector space, $\f\colon V\to V$ a topological automorphism, $W$ a closed $\f$-stable linear subspace of $V$ and $\overline\phi:V/W\to V/W$ the topological automorphism induced by $\phi$. Then
$$\ent^*(\f)=\ent^*(\f\restriction_W)+\ent^*(\overline\f).$$
\end{proposition}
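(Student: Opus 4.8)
The inequality $\ent^*(\f)\geq\ent^*(\f\restriction_W)+\ent^*(\overline\f)$ is already available from Lemma~\ref{semient}, so the whole task reduces to proving the reverse inequality $\ent^*(\f)\leq\ent^*(\f\restriction_W)+\ent^*(\overline\f)$. The plan is to exploit the Limit-free description of the entropy given in Corollary~\ref{cor:ent*lc}: it suffices to bound, uniformly by $\ent^*(\f\restriction_W)+\ent^*(\overline\f)$, the quantity $\dim\frac{\f M}{M}$ for every linearly compact linear subspace $M$ of $V$ with $M\leq\f M$ and $\dim\frac{\f M}{M}<\infty$. Taking the supremum over all such $M$ then yields the claim (the bound holds in $\N\cup\{\infty\}$, so no separate treatment of the infinite case is needed).

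Fix such an $M$ and set $N=M\cap W$ and $\bar M=(M+W)/W$. First I would check that $N$ and $\bar M$ are admissible test spaces for $\ent^*(\f\restriction_W)$ and $\ent^*(\overline\f)$ respectively. Both are linearly compact: $N$ as a closed linear subspace of $M$ by Proposition~\ref{prop:lc properties}(b), and $\bar M$ as the continuous image of $M$ in $V/W$ by Proposition~\ref{prop:lc properties}(c). Since $\f$ is a topological automorphism and $W$ is $\f$-stable (so $\f W=W$), injectivity of $\f$ gives $\f N=\f(M\cap W)=\f M\cap\f W=\f M\cap W$; as $N\subseteq W$ we have $\f N=\f\restriction_W(N)$, and $N=M\cap W\leq\f M\cap W=\f\restriction_W(N)$. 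Likewise $\overline\f\,\bar M=(\f M+W)/W\supseteq\bar M$ because $M\subseteq\f M$. The required finiteness of the two codimensions will follow once they are identified as summands of $\dim\frac{\f M}{M}<\infty$.

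The heart of the argument is the exact decomposition
$$\dim\frac{\f M}{M}=\dim\frac{\f N}{N}+\dim\frac{\overline\f\,\bar M}{\bar M}.$$
To establish it I would insert the intermediate space $M+\f N$ between $M$ and $\f M$ (note $M+\f N\leq\f M$, since $N\leq M\leq\f M$ and $\f N\leq\f M$) and split the dimension additively. On one side, $\frac{M+\f N}{M}\cong\frac{\f N}{\f N\cap M}$ and $\f N\cap M=\f M\cap W\cap M=M\cap W=N$, using $M\leq\f M$; this produces the first summand $\dim\frac{\f N}{N}$. On the other side, Dedekind's modular law applied to $M\leq\f M$ yields $\f M\cap(M+W)=M+(\f M\cap W)=M+\f N$, whence $\frac{\f M}{M+\f N}\cong\frac{\f M+W}{M+W}\cong\frac{\overline\f\,\bar M}{\bar M}$, the second summand (here $\overline\f\,\bar M=(\f M+W)/W$ as computed above). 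With this identity in hand, Corollary~\ref{cor:ent*lc} applied to $\f\restriction_W$ on $W$ and to $\overline\f$ on $V/W$ bounds each summand by the respective entropy, so $\dim\frac{\f M}{M}\leq\ent^*(\f\restriction_W)+\ent^*(\overline\f)$, and the reverse inequality follows.

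The step I expect to require the most care — rather than genuine difficulty — is the exactness of this decomposition together with the systematic use of the injectivity of $\f$: the identity $\f(M\cap W)=\f M\cap W$ fails for a general endomorphism, and this is precisely where the hypothesis that $\f$ is a topological automorphism is essential (it is also why the reduction machinery of Section~\ref{s:redauto} was needed to pass to the automorphism case). One must additionally verify that the modular-law rearrangement $\f M\cap(M+W)=M+\f N$ is legitimate for the given closed/linearly compact subspaces and that every quotient appearing is finite-dimensional so that dimensions add; these points are routine consequences of Proposition~\ref{prop:lc properties} but deserve to be spelled out.
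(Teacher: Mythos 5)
Your proposal is correct and follows essentially the same route as the paper's own proof: both reduce to the reverse inequality via Lemma~\ref{semient} and Corollary~\ref{cor:ent*lc}, both use injectivity of $\f$ and $\f W=W$ to get $\f(M\cap W)=\f M\cap W$, and both split $\dim\frac{\f M}{M}$ along the same intermediate subspace $M+(\f M\cap W)$ via the modular law, bounding the two pieces by $\ent^*(\f\restriction_W)$ and $\ent^*(\overline\f)$ respectively. The only difference is organizational (you insert the intermediate space first and then identify the two quotients, while the paper states the isomorphisms first), so there is nothing to change.
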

\begin{proof} 
By Lemma~\ref{semient}, we have $\ent^*(\f)\geq\ent^*(\f\restriction_W)+\ent^*(\overline\f)$. To prove the converse inequality, let $M$ be a linearly compact linear subspace of $V$ such that $M\leq\f M$ and $\f M/M$ has finite dimension. Then 
\begin{equation*}
\frac{\f M}{M+(\f M\cap W)}\cong\bigslant{\frac{\f M}{M}}{\frac{M+(\f M\cap W)}{M}}.
\end{equation*}
 Since $\frac{M+(\f M\cap W)}{M}\cong\frac{\f M\cap W}{\f M\cap W\cap M}=\frac{\f M\cap W}{M\cap W}$, we have that
\begin{equation}\label{eq:ent*lc3}
\dim\frac{\f M}{M}=\dim\frac{\f M\cap W}{M\cap W}+\dim\frac{\f M}{M+\f M\cap W}.
\end{equation}
Since $W$ is closed and $M$ is linearly compact, $M\cap W$ is linearly compact by Proposition~\ref{prop:lc properties}(a). Moreover, $\f(M\cap W)=\f M\cap \f W=\f M\cap W$, so $M\cap W$ has finite codimension in $\f(M\cap W)$. Corollary~\ref{cor:ent*lc} yields 
\begin{equation}\label{2}
\dim\frac{\f M\cap W}{M\cap W}\leq \ent^*(\f\restriction_W).
\end{equation}
On the other hand, $\pi M=\frac{M+W}{W}$ is linearly compact in $V/W$ by Proposition~\ref{prop:lc properties}(c). 
By the modular law, since $M\leq\f M$, , $M+(\f M\cap W)=\f M\cap(M+W)$. Therefore,
\begin{equation*}
\frac{\f M}{M+(\f M\cap W)}=\frac{\f M}{\f M\cap(M+W)}\cong\frac{\f M + W}{M+W},
\end{equation*}
Moreover, 
$$\pi(M)=\frac{M+W}{W}\leq\frac{\f M+W}{W}=\overline\f(\pi M).$$ 
Then
$$\dim\frac{\f M}{M+\f M\cap W}=\dim\frac{\overline\f(\pi M)}{\pi M},$$ 
and so, by Corollary~\ref{cor:ent*lc},
\begin{equation}\label{1}
\dim\frac{\f M}{M+\f M\cap W}\leq\ent^*(\overline\f).
\end{equation}
By Equations~\eqref{eq:ent*lc3}, \eqref{2} and \eqref{1}, we conclude that $$\dim\frac{\f M}{M}\leq\ent^*(\f\restriction_W)+\ent^*(\overline\f),$$
so Corollary~\ref{cor:ent*lc} yields the required inequality $\ent^*(\f)\leq\ent^*(\f\restriction_W)+\ent^*(\overline\f)$.
\end{proof}

A second step towards the proof of the Addition Theorem consists in proving it for linearly compact vector spaces.

\begin{proposition}\label{prop:ATlc}
Let $V$ be a linearly compact vector space, $\f\colon V\to V$ a continuous endomorphism, $W$ a closed $\f$-invariant linear subspace of $V$ and $\overline\f\colon V/W\to V/W$ the continuous endomorphism induced by $\f$. Then
$$\ent^*(\f)=\ent^*(\f\restriction_W)+\ent^*(\overline\f).$$
\end{proposition}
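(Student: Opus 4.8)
The plan is to reduce to the case of topological automorphisms, already settled in Proposition~\ref{thm:ATauto}, by means of the inverse-limit functor $\caL$ constructed above. The point is that $\caL$ replaces the continuous endomorphism $\f$ of the linearly compact space $V$ by the topological automorphism $\caL\f$ of the linearly compact space $\caL V$ (Proposition~\ref{prop:auto}), it preserves the topological entropy (Theorem~\ref{thm:redauto}), and it is compatible with the short exact sequence determined by $W$ (Proposition~\ref{prop: L exact}). In this way the whole statement follows in one stroke, yielding equality rather than just one inequality.

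First I would recall from the discussion preceding Proposition~\ref{prop: L exact} that, since the inverse limit functor on $\LC$ is exact (Remark~\ref{rem:ab5}), applying $\caL$ to the $\f$-equivariant short exact sequence $0\to W\to V\to V/W\to 0$ produces a short exact sequence $0\to\caL W\to\caL V\to\caL(V/W)\to 0$ of linearly compact spaces, in which $\caL W$ is a closed $\caL\f$-invariant linear subspace of $\caL V$ and $\caL(V/W)\cong_{top}\caL V/\caL W$. By Proposition~\ref{prop: L exact} one moreover has $(\caL\f)\restriction_{\caL W}=\caL(\f\restriction_W)$ and $\overline{\caL\f}=\caL\overline\f$, both topological automorphisms; in particular $\caL W$ is $\caL\f$-stable, so that the hypotheses of Proposition~\ref{thm:ATauto} are satisfied for the automorphism $\caL\f$ together with its closed stable subspace $\caL W$.

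Then I would simply chain the resulting equalities. Applying the Addition Theorem for topological automorphisms (Proposition~\ref{thm:ATauto}) to $\caL\f$ gives
$$\ent^*(\caL\f)=\ent^*\big((\caL\f)\restriction_{\caL W}\big)+\ent^*\big(\overline{\caL\f}\big).$$
By Theorem~\ref{thm:redauto} the left-hand side equals $\ent^*(\f)$, while by the last assertion of Proposition~\ref{prop: L exact} the two summands on the right equal $\ent^*(\f\restriction_W)$ and $\ent^*(\overline\f)$ respectively. Combining these identifications yields $\ent^*(\f)=\ent^*(\f\restriction_W)+\ent^*(\overline\f)$, as required.

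I do not expect a genuine obstacle here: once the functor $\caL$ and the automorphism case are in place, the argument is purely formal. The only points requiring care are bookkeeping ones — checking that $\caL W$ is closed and $\caL\f$-stable, so that Proposition~\ref{thm:ATauto} truly applies, and that the identification $\caL(V/W)\cong_{top}\caL V/\caL W$ is entropy-preserving — both of which are furnished by Proposition~\ref{prop: L exact} and the invariance under conjugation of Proposition~\ref{prop:basic prop}(a). As an alternative route, one could first extract the inequality $\ent^*(\f)\geq\ent^*(\f\restriction_W)+\ent^*(\overline\f)$ for free from Lemma~\ref{semient} and then be left only with the reverse inequality; but the automorphism reduction delivers both directions simultaneously, so I would favour it.
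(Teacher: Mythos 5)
Your proposal is correct and follows essentially the same route as the paper's own proof: applying the functor $\caL$ to the short exact sequence of flows, invoking Proposition~\ref{prop:auto} and Proposition~\ref{prop: L exact} to obtain the topological automorphism $\caL\f$ with closed $\caL\f$-stable subspace $\caL W$, and then combining Proposition~\ref{thm:ATauto} with Theorem~\ref{thm:redauto} and Proposition~\ref{prop: L exact} to transfer the entropies back. No gaps; the bookkeeping points you flag are exactly those the paper also relies on.
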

\begin{proof}
Consider the following short exact sequence of flows in $\LC$
\begin{equation*}
\xymatrix{
0\ar[r]&W\ar[r]\ar[d]^{\f\restriction_W}&V\ar[r]\ar[d]^{\f}&V/W\ar[r]\ar[d]^{\overline\f}&0\\
0\ar[r]&W\ar[r]&V\ar[r]&V/W\ar[r]&0.
}
\end{equation*}
By applying the functor $\caL$ (see \eqref{functorL} and \eqref{eq:diag11}), we obtain the commutative diagram
\begin{equation*}
\xymatrix{
0\ar[r]&\caL W\ar[r]\ar[d]^{(\caL\phi)\restriction_{\caL W}}&\caL V\ar[r]\ar[d]^{\caL \phi}&\caL(V/W)\ar[r]\ar[d]^{\overline{\caL\phi}}&0\\
0\ar[r]&\caL W\ar[r]&\caL V\ar[r]&\caL(V/W)\ar[r]&0,
}
\end{equation*}
where $\caL \phi:\caL V\to \caL V$ is a topological automorphism by Proposition~\ref{prop:auto}, and $\caL W$ is a closed $\caL\f$-stable linear subspace of $\caL$ by Proposition~\ref{prop: L exact}.  Therefore,
$$\ent^*(\f)=\ent^*(\caL\f)=\ent^*(\caL(\f\restriction_W))+\ent^*(\overline{\caL\phi})=\ent^*(\phi\restriction_W)+\ent^*(\overline\phi),$$
by Proposition~\ref{thm:ATauto},  Theorem~\ref{thm:redauto} and Proposition~\ref{prop: L exact}.
\end{proof}

We are now in position to prove the general statement of the Addition Theorem.

\begin{proof}[\bf Proof of Theorem~\ref{AT-intro}.]
Let $V_c\in\BV$ and $W_c=W\cap V_c$; then $W_c\in\B(W)$. By Theorem~\ref{thm:dec}, there exists a discrete linear subspace $W_d\leq W$ such that $W=W_c\oplus W_d$. Let $V_d\leq V$ such that $V=V_c\oplus V_d$ and $W_d\leq V_d$. Clearly, $V_d$ is a discrete subspace of $V$, since $V_c$ is open and $V_c\cap V_d=0$. By construction, the diagram
\begin{equation*}
\xymatrix{0\ar[r]&W_c\ar@/^2pc/[rrr]^{(\phi\restriction_{W})_{cc}} \ar[r]^{\iota^W_c}\ar@{^{(}->}[d]&W\ar[r]^{\f\restriction_W}&W\ar[r]^{p^W_c}&W_c\ar@{^{(}->}[d]\ar[r]&0\\
0\ar[r]&V_c\ar@/_2pc/[rrr]_{\phi_{cc}} \ar[r]^{\iota^V_c}&V\ar[r]^{\f}&V\ar[r]^{p^V_c}&V_c\ar[r]&0}
\end{equation*}
commutes, where $\iota^W_c, \iota^V_c,p^W_c,p^V_c$ are the canonical injections and projections of $W$ and $V$, respectively. This yields that $W_c$ is a closed $\f_{cc}$-invariant subspace of $V_c$ and that 
\begin{equation}\label{first}
(\f\restriction_W)_{cc}=\f_{cc}\restriction_{W_c}.
\end{equation}
Now, let $\pi\colon V\to V/W$ be the canonical projection and let $\overline V=V/W$. Let $\overline V_c=\pi(V_c)$ and $\overline V_d=\pi(V_d)$; then $\overline V_c\in\B(\overline V)$ since $\overline V_c$ is open and it is linearly compact by Proposition~\ref{prop:lc properties}(c), while $\overline V_d$ is discrete; moreover, $\overline V=\overline V_c\oplus\overline V_d$. 
The canonical continuous isomorphism $\alpha\colon V_c/W_c\to \overline V_c$ is a topological isomorphism by Proposition~\ref{prop:omt}; it makes the following diagram commute
\begin{equation*}
\xymatrix{
\overline V_c\ar@/^2pc/[rrr]^{\overline\f_{cc}} \ar[r]^{\iota^{\overline V}_c}&\overline V\ar[r]^{\overline\f}&\overline V\ar[r]^{p^{\overline V}_c}&\overline V_c\\
V_c/W_c \ar[rrr]^{\overline{\phi_{cc}}}\ar[u]^\alpha&&&V_c/W_c.\ar[u]^\alpha}
\end{equation*}
In other words,  $\alpha^{-1}\overline{\f}_{cc}\alpha=\overline{\f_{cc}}$ and so, by Proposition~\ref{prop:basic prop}(a),
\begin{equation}\label{second}
\ent^*(\overline{\f}_{cc})=\ent^*(\overline{\f_{cc}}).
\end{equation}
Finally, by using Equations~\eqref{first} and \eqref{second}, we obtain
\begin{equation*}\begin{split}
\ent^*(\phi)=\ent^*(\phi_{cc})&=\ent^*(\phi_{cc}\restriction_{W_c})+\ent^*(\overline{\phi_{cc}}) \\&=\ent^*((\phi\restriction_W)_{cc})+\ent^*(\overline\phi_{cc})=\ent^*(\phi\restriction_W)+\ent^*(\overline\phi),
\end{split}\end{equation*}
by Proposition~\ref{prop:rescc} and Proposition~\ref{prop:ATlc}.
\end{proof}

\section{Bridge Theorem}\label{s:bridge}

This section is devoted to prove that the topological entropy of a continuous endomorphism coincides with the algebraic entropy of the dual endomorphism with respect to the Lefschetz Duality. 

\begin{lemma}\label{B(G)}
Let $V$ be a l.l.c.\! vector space. Then $U\in\mathcal B(V)$ if and only if $U^\perp\in\mathcal B(\widehat V)$.
\end{lemma}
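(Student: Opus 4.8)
The plan is to translate the two defining conditions on $U$ — being open and being linearly compact — into conditions on the dual side, using the topological isomorphisms recorded in Remark~\ref{dualperp} ($\widehat{V/U}\cong_{top}U^\perp$ and $\widehat U\cong_{top}\widehat V/U^\perp$ for $U$ closed) together with the duality dictionary of \S~\ref{ss:lefdual}: namely that $V$ is discrete if and only if $\widehat V$ is linearly compact, and $V$ is linearly compact if and only if $\widehat V$ is discrete. Throughout I use the elementary equivalence that a linear subspace is open precisely when the corresponding quotient is discrete, which is what will convert statements about $\widehat U$ and $V/U$ into openness statements.

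For the forward implication, I would assume $U\in\BV$, so that $U$ is open and linearly compact, hence closed by Proposition~\ref{prop:lc properties}(a). First, since $U$ is open, $V/U$ is discrete, so $\widehat{V/U}$ is linearly compact; as $\widehat{V/U}\cong_{top}U^\perp$, this shows $U^\perp$ is linearly compact. Second, since $U$ is linearly compact, $\widehat U$ is discrete; as $\widehat U\cong_{top}\widehat V/U^\perp$, the quotient $\widehat V/U^\perp$ is discrete, which is equivalent to $U^\perp$ being open in $\widehat V$. Thus $U^\perp$ is open and linearly compact, i.e. $U^\perp\in\B(\widehat V)$.

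For the converse, rather than redoing the computation I would exploit the symmetry provided by Lefschetz Duality. Assume $U^\perp\in\B(\widehat V)$, and recall that $U^\perp$ is closed in $\widehat V$ by Lemma~\ref{perptop}(c). Applying the already-proved forward implication to the l.l.c.\ vector space $\widehat V$ and its subspace $U^\perp$ yields $(U^\perp)^\perp\in\B(\doublehat V)$. On the other hand, the canonical map $\omega_V\colon V\to\doublehat V$ is a topological isomorphism satisfying $\omega_V((U^\perp)^\top)=(U^\perp)^\perp$, and $(U^\perp)^\top=U$ by Lemma~\ref{perptop}(d) (here using that $U$ is closed). Hence $\omega_V(U)=(U^\perp)^\perp\in\B(\doublehat V)$, and since $\omega_V$ is a topological isomorphism we conclude $U\in\BV$.

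The only points requiring care are the standing assumption that $U$ is closed — automatic in the forward direction, since linearly compact subspaces are closed, and needed in the converse to invoke Lemma~\ref{perptop}(d) and Remark~\ref{dualperp} — and the correct matching of ``open'' with ``discrete quotient'' on each side. I expect no genuine obstacle beyond the bookkeeping of these identifications; the substance of the argument is carried entirely by the duality between the roles of ``discrete'' and ``linearly compact'' under $\widehat{\phantom{V}}$.
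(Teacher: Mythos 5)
Your proof is correct (modulo the same implicit restriction to closed $U$ that the paper's own proof makes), but your converse direction follows a genuinely different route. For the forward implication, the paper obtains ``$U$ linearly compact $\Rightarrow U^\perp$ open'' immediately from the definition of the topology of $\widehat V$, whose neighborhood basis at $0$ consists precisely of the annihilators of linearly compact subspaces; you instead derive it from $\widehat U\cong_{top}\widehat V/U^\perp$ (Remark~\ref{dualperp}) and the dictionary ``linearly compact $\Leftrightarrow$ dual discrete''. The equivalence ``$U$ open $\Leftrightarrow U^\perp$ linearly compact'' is handled by both of you in the same way, via $\widehat{V/U}\cong_{top}U^\perp$. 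The substantive divergence is the converse: the paper argues directly --- openness of $U^\perp$ yields a linearly compact $W\leq V$ with $W^\perp\leq U^\perp$, hence $U\leq W$ by Lemma~\ref{perptop}(a,d), so $U$ is linearly compact as a closed subspace of the linearly compact $W$ --- whereas you bootstrap by applying the already-proved forward implication inside the l.l.c.\ space $\widehat V$ to the subspace $U^\perp$, then pulling back along $\omega_V$ using $\omega_V((U^\perp)^\top)=(U^\perp)^\perp$ and $(U^\perp)^\top=U$. Your route is more economical and symmetric, since only one implication is ever proved, but it leans on the full Lefschetz Duality Theorem (that $\omega_V$ is a topological isomorphism), which the paper's direct argument never needs; the paper gets by with the annihilator calculus and the definition of the dual topology alone. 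One point you should make explicit rather than leave as a ``standing assumption'': closedness of $U$ in the converse is genuinely necessary, not bookkeeping --- for a dense non-closed subspace $U$ of a linearly compact space one has $U^\perp=\overline{U}^\perp\in\B(\widehat V)$ while $U\notin\BV$, so the statement is really about closed subspaces. The paper deals with this by the opening reduction ``$U^\perp=\overline{U}^\perp$ by Lemma~\ref{perptop}(b), so we may assume $U$ closed'', and you should insert the same line at the start of your converse.
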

\begin{proof} 
Let $U\in\BV$. Since $U$ is linearly compact, $U^\perp$ is open by definition. Conversely, assume that $U^\perp$ is open. Since $U^\perp=\overline{U}^\perp$ by Lemma~\ref{perptop}(b), we may assume that $U$ is closed. Since $U^\perp$ is open in $\widehat V$, there exists a linearly compact linear subspace $W$ of $V$ such that $W^\perp\leq U^\perp$. Since $U$ and $W$ are closed linear subspaces of $V$, $U\leq W$ by Lemma~\ref{perptop}(a,d). Thus $U$ is a closed linear subspace of the linearly compact space $W$, and clearly $U$ is linearly compact by Proposition~\ref{prop:lc properties}(a).

Finally, since $U$ is open in $V$ if and only if the quotient $V/U$ is discrete, Remark~\ref{dualperp} implies that $U$ is open in $V$ if and only if $U^\perp\cong_{top} \widehat{V/U}$ is linearly compact. 
\end{proof}

\begin{lemma}\label{perp}
Let $V$ be a l.l.c.\! vector space, $\phi:V\to V$ a continuous endomorphism and $U\in\mathcal B(V)$.
Then $(\phi^{-n}(U))^\perp=(\widehat\phi)^n (U^\perp)$ for every $n\in\N_+$.
\end{lemma}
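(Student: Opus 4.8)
The plan is to avoid the naive approach of exhibiting, for each $\psi\in(\phi^{-n}(U))^\perp$, a character $\chi\in U^\perp$ with $\psi=\chi\circ\phi^n$; that route forces one to extend a character off the image $\phi^nV$, which is awkward because $\phi^n\colon V\to V$ need not be open when $V$ is only l.l.c.\ (and not linearly compact). Instead I would establish the identity by a double-annihilator argument: compute the $\top$-annihilator in $V$ of the right-hand side, show it equals $\phi^{-n}(U)$, and then dualize back. The one elementary ingredient I would record first is that $(\widehat\phi)^n(\chi)=\chi\circ\phi^n$ for every $\chi\in\widehat V$, which is immediate from $\widehat\phi(\chi)=\chi\circ\phi$ by iteration.

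The structural input is that $(\widehat\phi)^n(U^\perp)$ is closed. Since $U\in\mathcal B(V)$, Lemma~\ref{B(G)} gives $U^\perp\in\mathcal B(\widehat V)$, so $U^\perp$ is linearly compact; hence its continuous image $(\widehat\phi)^n(U^\perp)$ is again linearly compact by Proposition~\ref{prop:lc properties}(c), and in particular closed by Proposition~\ref{prop:lc properties}(a). With this in hand I would compute
\[
\bigl((\widehat\phi)^n(U^\perp)\bigr)^\top=\{v\in V: \chi(\phi^n(v))=0\ \forall\,\chi\in U^\perp\}=\{v\in V:\phi^n(v)\in(U^\perp)^\top\}=\phi^{-n}(U),
\]
where the last equality uses that $U$ is closed (being linearly compact), so $(U^\perp)^\top=U$ by Lemma~\ref{perptop}(d). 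Applying $(-)^\perp$ to both ends and invoking the closedness of $(\widehat\phi)^n(U^\perp)$ then yields $(\phi^{-n}(U))^\perp=(\widehat\phi)^n(U^\perp)$, as desired; note that no induction on $n$ is needed, since the computation is carried out directly with $\phi^n$.

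The step I expect to require the most care is the final dualization, namely the identity $(B^\top)^\perp=B$ for a closed linear subspace $B\leq\widehat V$, applied to $B=(\widehat\phi)^n(U^\perp)$. This is the counterpart of Lemma~\ref{perptop}(d) with the roles of $V$ and $\widehat V$ interchanged, and is exactly where the closedness established above is used. I would justify it by applying Lemma~\ref{perptop}(d) to the l.l.c.\ space $\widehat V$ and transporting the resulting annihilator in $\doublehat V$ back to $V$ along the canonical topological isomorphism $\omega_V$, using the relation $\omega_V(B^\top)=B^\perp$ recorded in \S~\ref{ss:lefdual}. Everything else in the argument is a formal manipulation of the $\perp/\top$ calculus from Lemma~\ref{perptop} and Lemma~\ref{lem:intsum}.
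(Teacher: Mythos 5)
Your proposal is correct and follows essentially the same route as the paper: both prove the identity by computing the $\top$-annihilator pointwise (using $(\widehat\phi)^n(\chi)=\chi\circ\phi^n$ and $(U^\perp)^\top=U$) and then dualizing back via Lemma~\ref{perptop}(d). The only differences are cosmetic improvements on your side --- you treat general $n$ directly instead of reducing to $n=1$ and iterating, and you make explicit the closedness of $(\widehat\phi)^n(U^\perp)$ (as a continuous image of the linearly compact $U^\perp$) together with the dual form $(B^\top)^\perp=B$ transported along $\omega_V$, both of which the paper's phrase ``equivalent by Lemma~\ref{perptop}(d)'' leaves implicit.
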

\begin{proof}
We prove the result for $n=1$, that is, 
\begin{equation}\label{1eq}
(\phi^{-1}(U))^\perp=\widehat\phi (U^\perp). 
\end{equation}
The proof for $n>1$ follows easily from this case noting that $(\widehat\phi)^n=\widehat{(\phi^n)}$. 

Let $W=U^\perp$; then $W\in\mathcal B(\widehat V)$ by Lemma~\ref{B(G)} and $U=W^\top$ by Lemma \ref{perptop}(d). We prove that 
\begin{equation}\label{2eq}
\phi^{-1}(W^\top)=(\widehat\phi(W))^\top,
\end{equation} 
that is equivalent to Equation~\eqref{1eq} by Lemma \ref{perptop}(d). So let $x\in \phi^{-1}(W^\top)$; equivalently, $\phi(x)\in W^\top$, that is $\chi(\phi(x))=0$ for every $\chi\in W$. This occurs precisely when $\widehat\phi(\chi)(x)=0$ for every $\chi\in W$, if and only if $x\in (\widehat\phi(W))^\top$. This chain of equivalences proves Equation~\eqref{2eq}.
\end{proof}

By applying the previous lemmas, we can now give a proof to the Bridge Theorem.

\begin{proof}[\bf Proof of Theorem~\ref{BT-intro}]
Let $U\in\BV$; so, $U^\perp\in\B(\widehat V)$ by Lemma~\ref{B(G)}. For $n\in\N_+$,  it follows from Lemma~\ref{lem:intsum} and Lemma~\ref{perp} that
$$C_n(\phi,U)^\perp=T_n(\widehat\phi,U^\perp).$$
Hence, in view of Lemma~\ref{A/B}, $U/C_n(\phi, U)\cong\widehat{U/C_n(\phi,U)}\cong T_n(\widehat\phi,U^\perp)/U^\perp$,
and so 
$$\dim \frac{U}{C_n(\phi,U)}=\dim\frac{T_n(\widehat\phi,U^\perp)}{U^\perp}.$$
Therefore, $H^*(\phi,U)=H(\widehat\phi,U^\perp)$.
By Lemma~\ref{B(G)}, we can conclude that $\ent^*(\phi)=\ent(\widehat\phi)$.
\end{proof}

As a consequence of the Addition Theorem for the topological entropy $\ent^*$ and the Bridge Theorem, we deduce now the Addition Theorem for the algebraic entropy $\ent$ proved in \cite{CGB}. 

\begin{corollary}
Let $V$ be an l.l.c.\! vector space, $\f:V\to V$ a continuous endomorphism and $W$ a $\phi$-invariant closed linear subspace of $V$. Then 
$$\ent(\f)=\ent(\f\restriction_W)+\ent(\overline\f).$$
\end{corollary}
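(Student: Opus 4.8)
The plan is to deduce the statement from the already-established Addition Theorem for the topological entropy (Theorem~\ref{AT-intro}) by passing to the Lefschetz dual. The guiding principle is that the Bridge Theorem (Theorem~\ref{BT-intro}), combined with biduality, converts the algebraic entropy $\ent$ into the topological entropy $\ent^*$ of a dual endomorphism, so that the additivity of $\ent$ is inherited from the additivity of $\ent^*$ along the dual short exact sequence $0\to W^\perp\to\widehat V\to\widehat V/W^\perp\to 0$.

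First I would record the dual form of the Bridge Theorem: for every l.l.c.\ vector space $V$ and every continuous endomorphism $\phi$,
$$\ent(\phi)=\ent^*(\widehat\phi).$$
Indeed, applying Theorem~\ref{BT-intro} to the endomorphism $\widehat\phi$ of $\widehat V$ gives $\ent^*(\widehat\phi)=\ent(\doublehat\phi)$; by Lefschetz Duality (Theorem~\ref{thm:ldt}) the natural isomorphism $\omega_V\colon V\to\doublehat V$ conjugates $\phi$ to $\doublehat\phi$, and since $\ent$ is invariant under conjugation (as established in \cite{CGB}) we obtain $\ent(\doublehat\phi)=\ent(\phi)$.

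Next I would dualize the given data. The annihilator $W^\perp$ is a closed linear subspace of $\widehat V$ by Lemma~\ref{perptop}(c), and it is $\widehat\phi$-invariant: for $\chi\in W^\perp$ and $w\in W$ one has $\widehat\phi(\chi)(w)=\chi(\phi w)=0$, since $\phi W\leq W$. The key identifications come from Remark~\ref{dualperp}: the topological isomorphism $\alpha\colon\widehat{V/W}\to W^\perp$ conjugates $\widehat{\overline\phi}$ to $\widehat\phi\restriction_{W^\perp}$ (apply the functor $\widehat{-}$ to $\pi\circ\phi=\overline\phi\circ\pi$), while the topological isomorphism $\beta\colon\widehat V/W^\perp\to\widehat W$ conjugates $\overline{\widehat\phi}$ to $\widehat{\phi\restriction_W}$ (apply $\widehat{-}$ to $\phi\circ\iota=\iota\circ(\phi\restriction_W)$). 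By invariance of $\ent^*$ under conjugation (Proposition~\ref{prop:basic prop}(a)) together with the dual Bridge Theorem of the previous step, these give $\ent^*(\widehat\phi\restriction_{W^\perp})=\ent^*(\widehat{\overline\phi})=\ent(\overline\phi)$ and $\ent^*(\overline{\widehat\phi})=\ent^*(\widehat{\phi\restriction_W})=\ent(\phi\restriction_W)$.

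Finally I would apply the Addition Theorem for $\ent^*$ (Theorem~\ref{AT-intro}) to the triple $(\widehat V,\widehat\phi,W^\perp)$:
$$\ent^*(\widehat\phi)=\ent^*(\widehat\phi\restriction_{W^\perp})+\ent^*(\overline{\widehat\phi}).$$
Substituting the three translations $\ent^*(\widehat\phi)=\ent(\phi)$, $\ent^*(\widehat\phi\restriction_{W^\perp})=\ent(\overline\phi)$ and $\ent^*(\overline{\widehat\phi})=\ent(\phi\restriction_W)$ yields $\ent(\phi)=\ent(\overline\phi)+\ent(\phi\restriction_W)$, which is the claim. The only genuinely delicate point is the bookkeeping in the preceding paragraph: one must verify that the duality functor sends the restriction $\phi\restriction_W$ and the induced map $\overline\phi$ to the maps induced and restricted on the annihilator in exactly the way that matches the short exact sequence $0\to W^\perp\to\widehat V\to\widehat V/W^\perp\to 0$, so that Theorem~\ref{AT-intro} is applied to the correct subobject and quotient. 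This is where the explicit isomorphisms $\alpha,\beta$ and the functoriality of $\widehat{-}$ must be tracked carefully; everything else is a direct substitution.
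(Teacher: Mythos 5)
Your proposal is correct and follows essentially the same route as the paper: dualize the short exact sequence to $0\to W^\perp\to\widehat V\to\widehat V/W^\perp\to 0$, use the isomorphisms $\alpha,\beta$ of Remark~\ref{dualperp} to identify $\widehat\phi\restriction_{W^\perp}$ with $\widehat{\overline\phi}$ and $\overline{\widehat\phi}$ with $\widehat{\phi\restriction_W}$, translate $\ent$ into $\ent^*$ via the Bridge Theorem, and apply Theorem~\ref{AT-intro} to $(\widehat V,\widehat\phi,W^\perp)$. In fact you are slightly more careful than the paper in one spot: you justify the dual form $\ent(\phi)=\ent^*(\widehat\phi)$ explicitly through biduality and conjugation invariance of $\ent$, a step the paper leaves implicit.
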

\begin{proof}
Since $W^\perp$ is a closed $\widehat{\f}$-invariant linear subspace of $\widehat{V}$, consider the topological isomorphisms $\alpha:\widehat{V/W}\to W^\perp$ and $\beta:\widehat V/W^\perp\to \widehat W$ given by Equations~\eqref{alpha} and \eqref{beta}, respectively. It is possible to verify that the following diagrams commute
\begin{gather}\label{eq:add2}
\xymatrix{
W^\perp \ar[r]^{\widehat{\f}\restriction_{W^\perp}} &W^\perp\\
\widehat{V/W}\ar[u]^{\alpha} \ar[r]_{\widehat{\overline\f}}&\widehat{V/W\ar[u]_{\alpha} }
}
\qquad
\xymatrix{
\widehat{V}/W^\perp\ar[d]_\beta\ar[r]^{\overline{\widehat{\f}}} &\widehat{V}/W^\perp\ar[d]^{\beta} \\
\widehat{W}\ar[r]_{\widehat{\f\restriction_W}}&\widehat{W}.
}
\end{gather}
By Theorem~\ref{BT-intro} and Proposition~\ref{prop:basic prop}(a),
$$\ent(\f)=\ent^*(\widehat\f),\quad \ent(\f\restriction_W)=\ent^*(\widehat{\f\restriction_W})=\ent^*(\overline{\widehat\f})\quad\text{and}\quad\ent(\overline\f)=\ent^*(\widehat{\overline\f})=\ent^*(\widehat\f\restriction_{W^\perp}).$$
Then Theorem~\ref{AT-intro} yields
$$\ent(\f)=\ent^*(\widehat\f)=\ent^*(\overline{\widehat\f})+\ent^*(\widehat\f\restriction_{W^\perp})=\ent(\f\restriction_W)+\ent(\overline\f),$$
and this concludes the proof.
\end{proof}

Alternatively, one can deduce the Addition Theorem for the topological entropy $\ent^*$ from the Addition Theorem for the algebraic entropy $\ent$ and the Bridge Theorem.

\end{document}